\documentclass{amsart}

% Information that is shared between the article and the supplement
% (title and author information, macros, packages, etc.) goes into
% ex_shared.tex. If there is no supplement, this file can be included
% directly.

% Optional PDF information

\usepackage[english]{babel}
\usepackage[utf8]{inputenc}
\usepackage[numbers,sort&compress]{natbib}
\usepackage{comment,placeins,xcolor}
\usepackage{latexsym,amsfonts,amsmath,graphics,amssymb}
\usepackage{verbatim,url,mathscinet}
\usepackage{epsfig,psfrag,fullpage,graphicx}

\title{A quasi-Monte Carlo data compression algorithm for machine learning}
\thanks{MF is funded by the Deutsche Forschungsgemeinschaft (DFG, German Research Foundation) -- Project-ID 258734477 -- SFB 1173 as well as the Austrian Science Fund (FWF)
under the special research program Taming complexity in PDE systems (grant SFB F65).}

% Authors: full names plus addresses.
\author{Josef Dick
%\thanks{Department of Applied Mathematics,
%School of Mathematics and Statistics,
%The University of New South Wales,
%Sydney NSW 2052,
%Australia (\email{josef.dick@unsw.edu.au}).}
\and Michael Feischl} 
%\thanks{Institute for Analysis and Scientific Computing, TU Wien, Wiedner %Hauptstra\ss e 8--10, 1040 Wien, Austria} 
%  (\email{michael.feischl@tuwien.ac.at}).}

% !TEX root = diss.tex

%\def\B{{\mathbb B}}

\def\R{{\mathbb R}}

\def\N{{\mathbb N}}
\def\Z{{\mathbb Z}}

\def\ba{{\boldsymbol{a}}}
\def\bd{{\boldsymbol{d}}}
\def\bi{{\boldsymbol{i}}}
\def\bz{{\boldsymbol{z}}}
\def\bx{{\boldsymbol{x}}}
\def\by{{\boldsymbol{y}}}
\def\bk{{\boldsymbol{k}}}

\def\bomega{\boldsymbol{\omega}}

\def\rd{{\rm d}}

\def\NN{{\mathcal N}}

\def\XX{{\mathcal X}}
\def\YY{{\mathcal Y}}

\newcommand{\dist}[3][]{{\rm d\!l}[\ifthenelse{\equal{#1}{}}{}{#1;}#2,#3]}
\newcommand{\eff}[3][]{{\rm osc}\ifthenelse{\equal{#1}{}}{}{_{#1}}(#2;#3)}

\def\norm#1#2{\|#1\|_{#2}}

\def\set#1#2{\big\{#1\,:\,#2\big\}}

\def\eps{\varepsilon}

\def\b#1{{\boldsymbol{#1}}}

 % simple-layer potential
 % simple-layer potential
 % double-layer potential
 % hypersingular integral operator
 %
 % Dirichlet-to-Neumann map
 % monotonte operator

\def\normL2#1#2{\|#1\|_{L^2(#2)}}

\newtheorem{theorem}{Theorem}

\newtheorem{lemma}[theorem]{Lemma}
\newtheorem{corollary}[theorem]{Corollary}
\newtheorem{algorithm}[theorem]{Algorithm}

\newtheorem{remark}[theorem]{Remark}

\newcommand{\satop}[2]{\stackrel{\scriptstyle{#1}}{\scriptstyle{#2}}}

\date{\today}

\begin{document}

\maketitle

\begin{abstract}
We introduce an algorithm to reduce large data sets using so-called digital nets, which are well
 distributed point sets in the unit cube. These point sets together with weights, which depend on the data set, are used to represent the data. We show that this can be used to reduce the computational effort needed in finding good parameters in machine learning algorithms. To illustrate our method we provide some numerical examples for neural networks.
\end{abstract}

% REQUIRED
%\begin{keywords}

{\footnotesize Keywords:  quasi-Monte Carlo, big data, statistical learning, higher-order methods}
%\end{keywords}

% REQUIRED
%\begin{AMS}
{\footnotesize MSC: 65C05, 65D30, 65D32}
%\end{AMS}

\section{Introduction}

Let $\mathcal{X} = \{\bx_1, \ldots, \bx_N\} \subset [0,1]^s$ be a set of data points (given as column vectors) and 
let $\mathcal{Y} = \{ y_1, \ldots, y_N \} \subset \mathbb{R}$ be the corresponding responses 
($y_n$ is the response to $\bx_n$). We want to find a predictor $f_\theta:[0,1]^s \to \mathbb{R}$, 
parameterized by $\theta$, such that $f_\theta(\bx_n) \approx y_n$ for $n = 1, 2, \ldots, N$. 
In the simplest case, $f_\theta$ is linear, $f_\theta(\bx) = [1, \bx^\top] \theta$, where $\bx \in [0,1]^s$, 
$\theta \in \mathbb{R}^{s+1}$
are column vectors and $\theta$ needs to be computed from the data. 
Many other 'supervised' machine learning algorithms fall into this category, for instance, neural networks or 
support vector machines, see \cite{HTF17} for a range of other methods.

We consider the case where the quality of our predictor $f_\theta$ is measured by the loss
\begin{equation}\label{eq:errexp}
\mathrm{err}(f_\theta) = \frac{1}{N}\sum_{n=1}^N (f_\theta(\bx_n) - y_n)^2 = \frac{1}{N}\sum_{n=1}^N f_\theta^2(\bx_n) - \frac{2}{N}\sum_{n=1}^N y_n f_\theta(\bx_n) + \frac{1}{N}\sum_{n=1}^N y_n^2,
\end{equation}
with the goal to choose the parameters $\theta$ of the function $f_\theta$ such that $\mathrm{err}(f_\theta)$ is 
'small'. If the optimization procedure is non-trival ((stochastic) gradient descent, Newton's method, {\ldots}), it is possible that $\mathrm{err}(f_\theta)$ (or possibly {$\nabla_\theta^k \mathrm{err}(f_\theta)$ for $k=1,2,\ldots$}) has to be evaluated many times which leads to a cost proportional to
\begin{align*}
 \# \text{optimization steps}\times \underbrace{\text{amount of data}}_{N}.
\end{align*}
In modern applications in big data and machine learning, the number of data entries $N$ can be substantial. The goal of this work is to find a useful compression of the data $\XX$ which still allows us to compute $\mathrm{err}(f_\theta)$ (and derivatives $\nabla^k\mathrm{err}(f_\theta)$) up to a required accuracy in a fast way. 

The main result can be summarized as follows: There exist a point set $P = \{\bz_\ell\}_{\ell=0}^{L-1}$ and weights $\{W_{\XX,P,\nu,\ell} \}_{\ell = 0}^{L-1}, \{ W_{\XX,\YY,P,\nu,\ell} \}_{\ell=0}^{L-1} {\subset} \R$ such that the error $\mathrm{err}(f_\theta)$ of the predictor $f_\theta$ can be approximated by
\begin{align*}
 \mathrm{err}(f_\theta) \approx \mathrm{app}_L(f_\theta) := \sum_{\ell=0}^{L-1} f^2_\theta(\bz_\ell) W_{\XX,P,\nu,\ell} - 2 \sum_{\ell=0}^{L-1} f_\theta(\bz_\ell) W_{\XX,\YY,P,\nu,\ell} + \frac{1}{N}\sum_{n=1}^N y_n^2.
\end{align*}
(The meaning of the parameter $\nu$ will be explained below.) An important feature of this approximation is that the weights $W_{\XX,P, \nu, \ell}$ and $W_{\XX, \YY, P, \nu, \ell}$ do not depend on the parameter $\theta$. We show below that the weights can be computed efficiently in linear cost in $N$ and the convergence of the approximation error is almost linear in $L$ under some smoothness assumptions on $f_\theta$. Under those assumptions, it is reasonable to choose $L\ll N$. Performing the optimization on the approximate quantity $\mathrm{app}_L(f_\theta)$ thus may save considerable computation time, as we now have
\begin{align*}
 \# \text{optimization steps}\times \underbrace{\text{number of representative points}}_{L}.
\end{align*}
This introduces an additional error in the optimization procedure and may lead to a different local minimum 
(often, the optimization problem is not convex, but has many local minima) but the value of the approximate minimum is close to the exact one. We also provide a bound on the distance between the minimum of the square error and the minimum of the approximation of the square error under certain assumptions.

For any iterative optimization method which benefits from good starting values (Newton-Raphson, gradient descent, \ldots),
it is also possible to choose a sequence of values $L_1 \le L_2 \le L_3 \le \cdots$ and 
use $L_k$ in the $k$th optimization step, i.e., we make the approximation $\mathrm{app}_{L_k}(f_\theta)$ 
more accurate as the optimization procedure gets closer to an approximation of the parameters $\theta$.

\subsection{Related literature}
%TODO:

%Lossy compression, lossless compression; support points; subsampling;
A related technique in statistical learning is called \emph{subsampling}, where from a data set which is
too big to be dealt with directly, a subsample is drawn to represent {the} whole data set.
This subsample can be drawn uniformly, or using information available from the data (called \emph{leveraging}).
In~\cite{subsampling1}, an overview over these techniques and their convergence properties is given. The methods
are usually limited to the Monte-Carlo rate of convergence of $L^{-1/2}$ for a cost of $\mathcal{O}(L)$, 
see also~\cite{subsampling2,subsampling3} for linear models as well as~\cite{subsampling4} for more general models.
As a difference to the present method, the subsampling methods do not require the whole data set, but rely on statistical
assumptions about the dataset. Our method needs to run through the whole data set to compress it, but does not pose any restrictions
on the distribution.

Another related method are \emph{support points} introduced in~\cite{spoints}. The method compresses a given distribution
to a finite number of points by solving an optimization problem. These points can then be used to represent the distribution.
The rate of convergence is slightly better than Monte Carlo (by a polylogarithmic factor) but still slower than $\mathcal{O}(L^{-1/2-\eps})$
for all $\eps>0$. The approximation result poses restrictions on the distribution of the data as well as on the functions evaluated
on the data.
In a similar direction, \emph{coresets} summarize a given data in a smaller weighted set of datapoints. Originally developed for computational geometry, it is used for certain learning problems such as $k$-means clustering or PCA in~\cite{coresets}. Sketching algorithms are another way of reducing the size of data sets which is often based on using random projections, see for instance \cite{AAR20} and the references therein.

%\rev{Include sketching algorithms, konnte noch keine gute Referenz finden.}

\bigskip

A similar method as in the present paper can be derived by using sparse grid techniques (see, e.g.,~\cite{sparsegrids} for an overview). The idea
is to approximate $f_\theta$ by its sparse grid interpolation $I_L f_\theta$ and approximate the error using this representation.
While the pros and cons of both approaches must be investigated further, we only mention that the same
duality also appears in the study of high-dimensional integration problems. Both methods have their merits, with a slight
advantage towards quasi-Monte Carlo methods for really high-dimensional problems.

\subsection{Notation}

We introduce some notation used throughout the paper. Let $\mathbb{R}$ be the set of real numbers, $\mathbb{Z}$ be the set of integers, $\mathbb{N}$ be the set of natural numbers and $\mathbb{N}_0$ be the set of non-negative integers. Let $b \ge 2$ be a natural number (later on we will assume that $b$ is a prime number). For a non-negative integer $k$ let $k = \kappa_0 + \kappa_1 b + \cdots + \kappa_{m-1} b^{m-1}$ denote the base $b$ expansion of $k$, i.e., $\kappa_0, \kappa_1, \ldots, \kappa_{m-1} \in \{0, 1, \ldots, b-1\}$. For vectors $\bk = (k_1, k_2, \ldots, k_s)$ we write the base $b$ expansion of $k_j$ as $k_j = \kappa_{j,0} + \kappa_{j,1} b + \cdots + \kappa_{j, m-1} b^{m-1}$.

\subsubsection*{Notation related to (higher order) digital nets}
Given $k,\alpha\in\N$, we define the quantity $\mu_\alpha(k)$ as follows: Let $k=\kappa'_1 b^{a_1-1} + \ldots + \kappa'_r b^{a_r-1}$ for some $r\in\N$, non-zero digits $\kappa'_1, \ldots, \kappa'_r \in \{1, 2, \ldots, b-1\}$, and $1\leq a_r<a_{r-1}<\ldots <a_2<a_1$, i.e., $a_i$ is the position of the $i$th non-zero digit of $k$. Then, we define
\begin{align*}
 \mu_\alpha(k) := a_1 + \ldots + a_{\min\{\alpha,r\}}.
\end{align*}
Further we set $\mu_{alpha}(0) = 0$. For vectors $\bk = (k_1, k_2, \ldots, k_s) \in \N_0^s$ we write $k_j = \kappa'_{j,1} b^{a_{j,1}-1} + \cdots + \kappa'_{j, r_j} b^{a_{j,r_j}-1}$ and set $$\mu_\alpha(\bk) = \mu_\alpha(k_1) + \cdots + \mu_{\alpha}(k_s) = \sum_{j=1}^s \sum_{i=1}^{\min\{\alpha, r_j \} } a_{j, i}.$$ 

For a vector $\boldsymbol{z} \in [0,1)^s$ we write $\boldsymbol{z} = (z_1, z_2, \ldots, z_s)^\top$. We write the base $b$ expansion of the components of a vector as $z_{j} = z_{j,1} b^{-1} + z_{j,2} b^{-2} + \cdots$, where $z_{j,i} \in \{0, 1, \ldots, b-1\}$ and where we assume that for each fixed $j \in \{1,2, \ldots, s\}$, infinitely many of the $z_{j,i}$, $i \in \mathbb{N}$, are different from $b-1$. This makes the expansion of $z_{j,i}$ unique. If the vector $\boldsymbol{x}_n$ depends on an additional index $n$, then we write $x_{n,j}$ for the components and $x_{n,j,i}$ for the corresponding digits.

Let $u, v$ be two non-negative real numbers. Assume that the base $b$ expansions are given by $u = u_r b^r + u_{r-1} b^{r-1} + \cdots$ and $v = v_s b^s + v_{s-1} b^{s-1} + \cdots$ for some $r, s \in \mathbb{Z}$ and $u_r, u_{r-1}, \ldots, v_s, v_{s-1}, \ldots \in \{0, 1, \ldots, b-1\}$, where again we assume that infinitely many of the $u_i$ and also infinitely many of the $v_i$ are different from $b-1$, which makes the expansions unique. In the following we set $u_{r+1} = u_{r+2} = \cdots = 0$ and analogously $v_{s+1} = v_{s+2} = \cdots = 0$. 

We introduce the digit-wise addition $\oplus_b$ and subtraction $\ominus_b$ modulo $b$. We have $z = u \oplus_b v$, if $z$ has base $b$ expansion $z = z_{\max\{ r,s \}} b^{\max \{ r,s \}} + z_{\max \{r,s \} -1} b^{\max \{ r,s \}-1} + \cdots$, where $z_i = u_i + v_i \pmod{b}$ for all $i = \max \{ r,s\}, \max \{ r,s \}-1, \max \{r,s\} -2, \ldots$. Similarly we define $z = u \ominus_b v$ by defining the digits by $z_i = u_i - v_i \pmod{b}$ for $i = \max \{r,s\}, \max \{r,s \} -1, \max \{r,s \}-2, \ldots$.

For two vectors $\boldsymbol{x} = (x_1, \ldots, x_s)^\top \in \mathbb{R}^s$ and $\boldsymbol{y} = (y_1, \ldots, y_s)^\top \in \mathbb{R}^s$ we write $\boldsymbol{x} \le \boldsymbol{y}$ if $x_j \le y_j$ for all $1 \le j \le s$. For a subset $u \subseteq \{1, 2, \ldots, s\}$, the vector $(\bx_u, \boldsymbol{1}_{-u})$ denotes the vector whose $j$th component is $x_j$ is $j \in u$ and $1$ if $j \notin u$.

\subsection{Elementary Intervals}

We define elementary intervals in base $b$, where $b \ge 2$ is an integer, in the following way: Let $\boldsymbol{d} = (d_1, \ldots, d_s)^\top$ be an integer vector, and let $\nu \ge 0$ be an integer. We assume that $d_j \ge 0$ and that $| \boldsymbol{d} | = d_1 + d_2 + \dots + d_s = \nu$. Define the set $$ K_{\boldsymbol{d}} = \{\boldsymbol{a} = (a_1, \ldots, a_s)^\top \in \mathbb{N}_0^s: a_j < b^{d_j} \}.$$ Then for a given $\boldsymbol{d}$ and a vector $\boldsymbol{a} \in K_{\boldsymbol{d}}$ the elementary interval $I_{\boldsymbol{a}, \boldsymbol{d}}$ is given by
\begin{equation*}
I_{\boldsymbol{a}, \boldsymbol{d}} = \prod_{j=1}^s \left[ \frac{a_j}{b^{d_j}}, \frac{a_j+1}{b^{d_j}} \right).
\end{equation*}
Obviously we have $\mathrm{Vol}(I_{\boldsymbol{a}, \boldsymbol{d}}) = b^{-|\boldsymbol{d}| } = b^{-\nu}$ and for a given  $\boldsymbol{d}$, the elementary intervals $\mathcal{I}_{\boldsymbol{d}} = \{I_{\boldsymbol{a}, \boldsymbol{d}}: \boldsymbol{a} \in K_{\boldsymbol{d}} \} $ partition the unit cube $[0,1)^s$.

Let $P = \{\boldsymbol{z}_0, \boldsymbol{z}_1, \ldots, \boldsymbol{z}_{L-1} \} \subset [0,1)^s$ be a chosen point set which we use to represent the data points $\XX$. %We first derive weights for a particular partitioning  $\mathcal{I}_{\boldsymbol{d}}$ of the unit cube, which is fixed by fixing a vector $\boldsymbol{d}$. To shorten the notation, 
We set $\XX_{\boldsymbol{a}, \boldsymbol{d}} = \XX \cap I_{\boldsymbol{a}, \boldsymbol{d}}$ and $P_{\boldsymbol{a}, \boldsymbol{d}} = P \cap I_{\boldsymbol{a}, \boldsymbol{d}}$. Further let $|\XX_{\boldsymbol{a}, \boldsymbol{d}} |$ and $| P_{\boldsymbol{a}, \boldsymbol{d}}|$ denote the number of elements in these sets. 

For $\boldsymbol{y} \in [0,1)^s$, let $I_{\boldsymbol{d}}(\boldsymbol{y}) = I_{\boldsymbol{a}, \boldsymbol{d}}$ where $\boldsymbol{a} \in K_{\boldsymbol{d}}$ is chosen such that $\boldsymbol{y} \in I_{\boldsymbol{a}, \boldsymbol{d}}$, i.e., $I_{\bd}(\by)$ is the elementary interval which contains $\by$. Further we set $\XX_{\boldsymbol{d}}(\boldsymbol{y}) = \XX \cap I_{\boldsymbol{d}}(\boldsymbol{y})$ and $P_{\boldsymbol{d}}(\boldsymbol{y}) = P \cap I_{\boldsymbol{d}}(\boldsymbol{y})$. For $\nu \in \mathbb{N}_0$ we also define $\XX_\nu(\boldsymbol{y}) = \bigcup_{ \satop{ \boldsymbol{d} \in \mathbb{N}_0^s}{|\boldsymbol{d}| = \nu}} \XX_{\boldsymbol{d}}(\boldsymbol{y})$ and $P_\nu(\boldsymbol{y}) = \bigcup_{ \satop{ \boldsymbol{d} \in \mathbb{N}_0^s}{|\boldsymbol{d}| = \nu}} P_{\boldsymbol{d}}(\boldsymbol{y})$.

Let $\nu \ge 0$ be an arbitrary integer which determines the volume of each elementary interval in the partitions, which is $b^{-\nu}$. Further let
\begin{equation*}
K_{\nu} = \bigcup_{\satop{\boldsymbol{d} \in \mathbb{N}_0^s}{ |\boldsymbol{d} | = \nu}} K_{\boldsymbol{d}}.
\end{equation*}

{
We will use the well-known combination principle (see~\cite{sparsegrids}) in order to work with the set $K_\nu$. Since we use the principle with indicator functions instead of interpolation operators, we recall its proof below.
\begin{lemma}\label{lem:inex}
There holds the combination principle for indicator functions $\boldsymbol{1}_{\cdot}$
\begin{equation}\label{inexform}
\boldsymbol{1}_{\boldsymbol{a} \in K_\nu} = \sum_{q=0}^{s-1} (-1)^q {s-1 \choose q} \sum_{\satop{\boldsymbol{d} \in \mathbb{N}_0^s}{|\boldsymbol{d} | = \nu-q}} \boldsymbol{1}_{\boldsymbol{a} \in K_{\boldsymbol{d}}}, \quad \mbox{for all } \boldsymbol{a} \in \mathbb{N}_0^s,
\end{equation}
where for $\nu-q < 0$ the sum $\sum_{\satop{\boldsymbol{d} \in \mathbb{N}_0^s}{|\boldsymbol{d} | = \nu-q}} \boldsymbol{1}_{\boldsymbol{a} \in K_{\boldsymbol{d}}}$ is set to $0$.
\end{lemma}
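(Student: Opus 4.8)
The plan is to reduce the identity to a one–dimensional count followed by a generating–function computation. First I would reformulate membership in the elementary sets: for $\boldsymbol{a} \in \mathbb{N}_0^s$ and each $j$ let $e_j := \min\{e \in \mathbb{N}_0 : a_j < b^e\}$ and write $\boldsymbol{e} = (e_1, \ldots, e_s)$. Since $\boldsymbol{a} \in K_{\boldsymbol{d}}$ means $a_j < b^{d_j}$ for every $j$, this is equivalent to $d_j \ge e_j$ for all $j$, i.e.\ $\boldsymbol{d} \ge \boldsymbol{e}$. Consequently $\boldsymbol{a} \in K_\nu$ holds precisely when there is some $\boldsymbol{d}$ with $|\boldsymbol{d}| = \nu$ and $\boldsymbol{d} \ge \boldsymbol{e}$, which is possible if and only if $|\boldsymbol{e}| \le \nu$; hence the left-hand side of~\eqref{inexform} equals $\boldsymbol{1}_{|\boldsymbol{e}| \le \nu}$.

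Next I would count the inner sum on the right-hand side. Substituting $\boldsymbol{c} = \boldsymbol{d} - \boldsymbol{e}$ turns the index set $\{\boldsymbol{d} \in \mathbb{N}_0^s : \boldsymbol{d} \ge \boldsymbol{e},\ |\boldsymbol{d}| = \nu - q\}$ into $\{\boldsymbol{c} \in \mathbb{N}_0^s : |\boldsymbol{c}| = \nu - q - |\boldsymbol{e}|\}$, so by the standard stars-and-bars count the inner sum equals $\binom{\nu - q - |\boldsymbol{e}| + s - 1}{s-1}$ when $\nu - q - |\boldsymbol{e}| \ge 0$ and $0$ otherwise. Writing $M := \nu - |\boldsymbol{e}|$, the claim thus collapses to the purely combinatorial identity
\begin{equation*}
\boldsymbol{1}_{M \ge 0} = \sum_{q=0}^{s-1} (-1)^q \binom{s-1}{q} \binom{M - q + s - 1}{s-1},
\end{equation*}
with the convention that the last binomial vanishes whenever $M - q < 0$.

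To prove this identity I would compare coefficients in a power-series factorization. The generating function of the weak-composition counts is $(1-x)^{-s} = \sum_{m \ge 0} \binom{m+s-1}{s-1} x^m$, which already encodes the convention since its coefficient is $0$ for $m < 0$, while $(1-x)^{s-1} = \sum_{q=0}^{s-1} (-1)^q \binom{s-1}{q} x^q$. The Cauchy product of these two series is $(1-x)^{-1} = \sum_{M \ge 0} x^M$, and reading off the coefficient of $x^M$ gives exactly the right-hand side above, equal to $\boldsymbol{1}_{M \ge 0}$. Combining this with the two reductions yields the lemma. The only delicate point is the treatment of the convention for negative arguments: the finite sum over $q$ reaches down to index $M - (s-1)$, so for $0 \le M < s-1$ several binomials must be set to $0$ by hand, and the polynomial $\binom{M-q+s-1}{s-1}$ does not vanish there automatically. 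Phrasing the step through the Cauchy product is precisely what makes this boundary bookkeeping disappear, since the ``zero for negative index'' rule is built into the power series, and this is the part I would be most careful about.
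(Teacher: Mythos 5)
Your proof is correct, and its first half coincides with the paper's own argument: your vector $\boldsymbol{e}$ of minimal exponents is exactly the paper's minimal multi-index $\boldsymbol{d}_0$, and both proofs use the same stars-and-bars count $\binom{\nu-q-|\boldsymbol{e}|+s-1}{s-1}$ to collapse the claim to the one-variable binomial identity $\boldsymbol{1}_{M\ge 0}=\sum_{q=0}^{s-1}(-1)^q\binom{s-1}{q}\binom{M-q+s-1}{s-1}$. Where you genuinely part ways is the proof of that identity: the paper proceeds by induction on $s$, expanding both binomials with Pascal's rule and exhibiting a cancellation of the cross terms, whereas you read the identity off as the coefficient of $x^M$ in the factorization $(1-x)^{-s}\,(1-x)^{s-1}=(1-x)^{-1}$. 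Your route is shorter, handles the boundary conventions for negative arguments automatically inside the formal power series, and treats the cases $\boldsymbol{a}\in K_\nu$ and $\boldsymbol{a}\notin K_\nu$ uniformly through the indicator $\boldsymbol{1}_{M\ge 0}$ (the paper dispatches the latter case separately at the outset). What the paper's induction buys in exchange is complete elementarity: it uses nothing beyond Pascal's rule, at the cost of a longer computation.

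One small correction to your closing remark: for $0\le M<s-1$ and $M-q<0$, the upper entry $M-q+s-1$ is a non-negative integer strictly smaller than $s-1$, so $\binom{M-q+s-1}{s-1}$ \emph{does} vanish automatically, even when read as the falling-factorial polynomial, since the product $n(n-1)\cdots(n-s+2)$ with $0\le n<s-1$ contains the factor $0$. The convention genuinely matters only when the upper entry itself is negative, i.e.\ $M-q<-(s-1)$, which can occur only in the case $M<0$ (equivalently $\boldsymbol{a}\notin K_\nu$); there the polynomial value is non-zero and must indeed be set to $0$, which your Cauchy-product formulation does. This misplacement of where the delicacy lies does not affect the validity of your argument.
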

\begin{proof}
Assume $\nu\in\N$ and $\boldsymbol{a}\in \N_0^s$ fixed and note that we use the convention $\binom{n}{k}=0$ if $n<k$ or $k<0$. If the left-hand side of~\eqref{inexform} is zero, also the right-hand side is zero by definition. Hence, we assume $\boldsymbol{a}\in K_\nu$. By construction of $K_{\boldsymbol{d}}$, there exists a minimal multi-index $\boldsymbol{d}_0$ such that $\boldsymbol{a}\in K_{\boldsymbol{d}_0}$ and each $\boldsymbol{d}\in\N_0^s$ with $\boldsymbol{a}\in K_{\boldsymbol{d}}$ satisfies $\boldsymbol{d}_0\leq \boldsymbol{d}$ entry-wise. The number of $\boldsymbol{d} \ge \boldsymbol{d}_0$ with $|\boldsymbol{d}| = \nu-q$ is equal to the number of $\boldsymbol{e} \ge \boldsymbol{0}$ with $|\boldsymbol{e}| = \nu-q-|\boldsymbol{d}_0|$. This number is given by $\binom{\nu-q-|\boldsymbol{d}_0| + s-1}{\nu-q-|\boldsymbol{d}_0|}$.
%
%
%
%If $\nu-q \geq |\boldsymbol{d}_0|$, we find all such $\boldsymbol{d}$ with $|\boldsymbol{d}|=\nu-q$ by $(\nu-q-|\boldsymbol{d}_0|)$-times  increasing an arbitrary entry of $\boldsymbol{d}_0$ by one. There are $\binom{\nu-q-|\boldsymbol{d}_0| + s-1}{\nu-q-|\boldsymbol{d}_0|}$ possibilities to do that. 
Hence, the identity~\eqref{inexform} 
simplifies to
\begin{align}\label{eq:inex}
    1=\sum_{q=0}^{s-1} (-1)^q {s-1 \choose q} \binom{r-q + s-1}{r-q}
\end{align}
with $r:=\nu-|\boldsymbol{d}_0|$.
This can be shown in a straightforward fashion by induction on $s$. For $s=1$,~\eqref{eq:inex} is obviously true.
Assume~\eqref{eq:inex} holds for some $s\in\N$. Then, we use the recursive identity $\binom{r}{k}=\binom{r-1}{k-1}+\binom{r-1}{k}$ to obtain
\begin{align}\label{eq:inex1}
\begin{split}
 \sum_{q=0}^{s} (-1)^q {s \choose q} \binom{r-q + s}{r-q}   =
 \sum_{q=0}^{s} &(-1)^q\left( {s-1 \choose q-1} \binom{r-q + s-1}{r-q-1}+
  {s-1 \choose q} \binom{r-q + s-1}{r-q-1}\right.\\
  &+\left.
 {s-1 \choose q-1} \binom{r-q + s-1}{r-q}+
 {s-1 \choose q} \binom{r-q + s-1}{r-q}\right).
 \end{split}
\end{align}
The sum over the last term equals one by use of the induction assumption. It remains to show that the other terms cancel each other. An index shift in $q$ in the second term shows
\begin{align*}
    \sum_{q=0}^{s} &(-1)^q\left( {s-1 \choose q-1} \binom{r-q + s-1}{r-q-1}+
  {s-1 \choose q} \binom{r-q + s-1}{r-q-1}\right) \\
  &= 
   \sum_{q=0}^{s} (-1)^q {s-1 \choose q-1} \left(\binom{r-q + s-1}{r-q-1}
 - \binom{r-q + s}{r-q}\right)= -\sum_{q=0}^{s} (-1)^q {s-1 \choose q-1}\binom{r-q + s-1}{r-q},
\end{align*}
which equals the negative third term in~\eqref{eq:inex1}. This concludes the induction and hence the proof.
\end{proof}
}

\subsection{Derivation of the weights $\{ W_{\XX, P, \nu, \ell} \}_{\ell=0}^{L-1} $}

First consider the case of a fixed partition determined by a vector $\boldsymbol{d}$
\begin{align*}
\frac{1}{N} \sum_{n=1}^N f^2_\theta(\bx_n) = & \sum_{ \satop{ \boldsymbol{a} \in K_{\boldsymbol{d}} }{  \XX_{\boldsymbol{a}, \boldsymbol{d}} \neq \emptyset} } \frac{ |\XX_{\boldsymbol{a}, \boldsymbol{d} }| }{N} \frac{1}{| \XX_{\boldsymbol{a}, \boldsymbol{d}} | } \sum_{\satop{n=1}{ \bx_n \in I_{\boldsymbol{a}, \boldsymbol{d}} }}^N f^2_\theta(\bx_n) \approx  \sum_{ \satop{ \boldsymbol{a} \in K_{\boldsymbol{d}} }{   P_{\boldsymbol{a}, \boldsymbol{d}} \neq \emptyset} } \frac{ | \XX_{\boldsymbol{a}, \boldsymbol{d} } | }{N} \frac{1}{ | P_{\boldsymbol{a}, \boldsymbol{d}} | } \sum_{\satop{\ell=0}{ \bz_\ell \in I_{\boldsymbol{a}, \boldsymbol{d}} }}^{L-1} f^2_\theta(\bz_\ell) \\ = & \sum_{\ell=0 }^{L-1} f^2_\theta(\bz_\ell)  \sum_{ \satop{ \boldsymbol{a} \in K_{\boldsymbol{d}} }{ \bz_\ell \in I_{\boldsymbol{a}, \boldsymbol{d}} } } \frac{ | \XX_{\boldsymbol{a}, \boldsymbol{d} } | }{N} \frac{1}{ | P_{\boldsymbol{a}, \boldsymbol{d}} | }  =  \sum_{\ell=0}^{L-1} f^2_\theta(\boldsymbol{z}_\ell) \frac{|\XX_{\boldsymbol{d}}(\boldsymbol{z}_\ell)| }{N} \frac{1}{| P_{\boldsymbol{d}}(\boldsymbol{z}_\ell)|}.
\end{align*}

To obtain an analogous formula which incorporates all possible partitions, we can proceed in the following way. Using the inclusion-exclusion formula \eqref{inexform}, we obtain the approximation
\begin{align}\label{app_X}
\frac{1}{N} \sum_{n=1}^N f_\theta^2(\bx_n) \approx & \sum_{\ell=0}^{L-1} f_\theta^2(\bz_\ell) \underbrace{ \sum_{q=0}^{s-1} (-1)^q { s-1 \choose q} \sum_{\satop{ \boldsymbol{d} \in \mathbb{N}_0^s}{ |\boldsymbol{d}| = \nu-q}}   \frac{ | \XX_{ \boldsymbol{d} }(\boldsymbol{z}_\ell) | }{N} \frac{1}{ | P_{\boldsymbol{d}}( \boldsymbol{z}_\ell) | }}_{= W_{\XX, P, \nu, \ell} }.
\end{align}
While the formula seems quite expensive to compute, we will present an efficient algorithm to do this in Section~\ref{sec:effalg}, below.

% Conceivably, one could also average over all partitions instead of using the inclusion-exclusion formula to obtain an estimation of the sum. However, this does not yield good theoretical approximation properties when we prove the error bounds below and also numerical experiments do not support this approach.

\subsection{Derivation of the weights $\{ W_{\XX, \YY, P, \nu, \ell} \}_{\ell=0}^{L-1}$}

The second set of weights can be derived in a similar manner. For a given $\boldsymbol{d} \in \mathbb{N}_0^s$ (i.e., partition) we use the estimation
\begin{align*}
\frac{1}{N} &\sum_{n=1}^N y_n f_\theta(\bx_n) =   \sum_{\satop{\boldsymbol{a} \in K_{\boldsymbol{d}}}{\XX_{\boldsymbol{a}, \boldsymbol{d}}  \neq \emptyset }}  \frac{ 1  }{N}  \sum_{ \satop{n=1}{\bx_n \in I_{\boldsymbol{a}, \boldsymbol{d}}} }^N y_n f_\theta(\bx_n) \approx  \sum_{\satop{\boldsymbol{a} \in K_{\boldsymbol{d}}}{\XX_{\boldsymbol{a}, \boldsymbol{d}}  \neq \emptyset }}  \frac{ 1  }{N}  \sum_{ \satop{n=1}{\bx_n \in I_{\boldsymbol{a}, \boldsymbol{d}}} }^N \frac{y_n}{|\XX_{\ba, \bd}|} \sum_{m=1 \atop \bx_m \in I_{\ba, \bd}}^N f_\theta(\bx_m) \\ &\approx  \sum_{\satop{\boldsymbol{a} \in K_{\boldsymbol{d}}}{ P_{\boldsymbol{a}, \boldsymbol{d}}  \neq \emptyset }}  \frac{ 1} {N} \sum_{\satop{n=1}{\bx_n \in I_{\boldsymbol{a}, \boldsymbol{d}} }}^N y_n \frac{1}{  | P_{\boldsymbol{a}, \boldsymbol{d}} |  }  \sum_{ \satop{\ell=0}{\bz_\ell \in I_{\boldsymbol{a}, \boldsymbol{d}}} }^{L-1}  f_\theta(\bz_\ell)   =   \sum_{ \ell=0 }^{L-1}  f_\theta(\bz_\ell) \sum_{\satop{\boldsymbol{a} \in K_{\boldsymbol{d}}}{ \bz_\ell \in I_{\boldsymbol{a}, \boldsymbol{d}}  }}  \frac{ 1 }{N} \frac{1}{  | P_{\boldsymbol{a}, \boldsymbol{d}} | }   \sum_{\satop{n=1}{\bx_n \in I_{\boldsymbol{a}, \boldsymbol{d}} }}^N y_n \\ &=  \sum_{ \ell=0 }^{L-1}  f_\theta(\bz_\ell)  \frac{ 1 }{N} \frac{1}{  | P_{\boldsymbol{d}}(\bz_\ell) | }   \sum_{\satop{n=1}{\bx_n \in I_{ \boldsymbol{d}}(\bz_\ell) }}^N y_n. 
\end{align*}

We use again the inclusion-exclusion formula \eqref{inexform} to obtain
\begin{align*}
\frac{1}{N} \sum_{n=1}^N y_n f_\theta(\bx_n)  \approx  & \sum_{ \ell=0 }^{L-1}  f_\theta(\bz_\ell) \underbrace{ \sum_{q=0}^{s-1} (-1)^q {s-1 \choose q} \sum_{\satop{\boldsymbol{d} \in \mathbb{N}_0^s}{| \boldsymbol{d}| = \nu-q}}   \frac{ 1 }{N} \frac{1}{  | P_{\boldsymbol{d}}(\bz_\ell) | }   \sum_{\satop{n=1}{\bx_n \in I_{\boldsymbol{d}}(\bz_\ell) }}^N y_n }_{= W_{\XX, \YY, P, \nu, \ell} }.
\end{align*}

\section{Digital nets}

Let $\alpha \ge 1$ be an integer. As point set $P$ we use an order $\alpha$ digital $(t,m,s)$-net in base $b$. If $\alpha = 1$ we call those point sets a digital $(t,m,s)$-net in base $b$. The point sets are designed to be well distributed in the unit cube $[0,1)^s$. We first introduce order $1$   $(t,m,s)$-nets (which we simply call $(t,m,s)$-nets).

\subsubsection*{$(t,m,s)$-nets}

A point set $P \subset [0,1)^s$  consisting of $L=b^m$ elements is called a $(t,m,s)$-net if every elementary interval $I_{\boldsymbol{a}, \boldsymbol{d}}$ with $|\boldsymbol{d}| = m-t$ contains exactly $b^t$ points, i.e., if $|P_{\boldsymbol{a}, \boldsymbol{d}}| = b^t$ for all $\boldsymbol{a} \in K_{\boldsymbol{d}}$ and all $\bd \in \mathbb{N}_0$ with $|\bd| = m-t$. As a consequence, if $P$ is a $(t,m,s)$-net we have for any $\nu \le m-t$ and $\boldsymbol{d} \in \mathbb{N}_0^s$ with $|\boldsymbol{d}| = \nu-q$, $0 \le q \le \min\{s-1, \nu\}$ that
\begin{equation*}
|P_{\boldsymbol{a}, \boldsymbol{d}} | = b^{m - \nu + q},
\end{equation*}
see Figure~\ref{fig:dignet1} for an example with $b=m=2$.

\begin{figure}
\includegraphics[trim = {0 300 0 50}, clip,  width=0.35\textwidth]{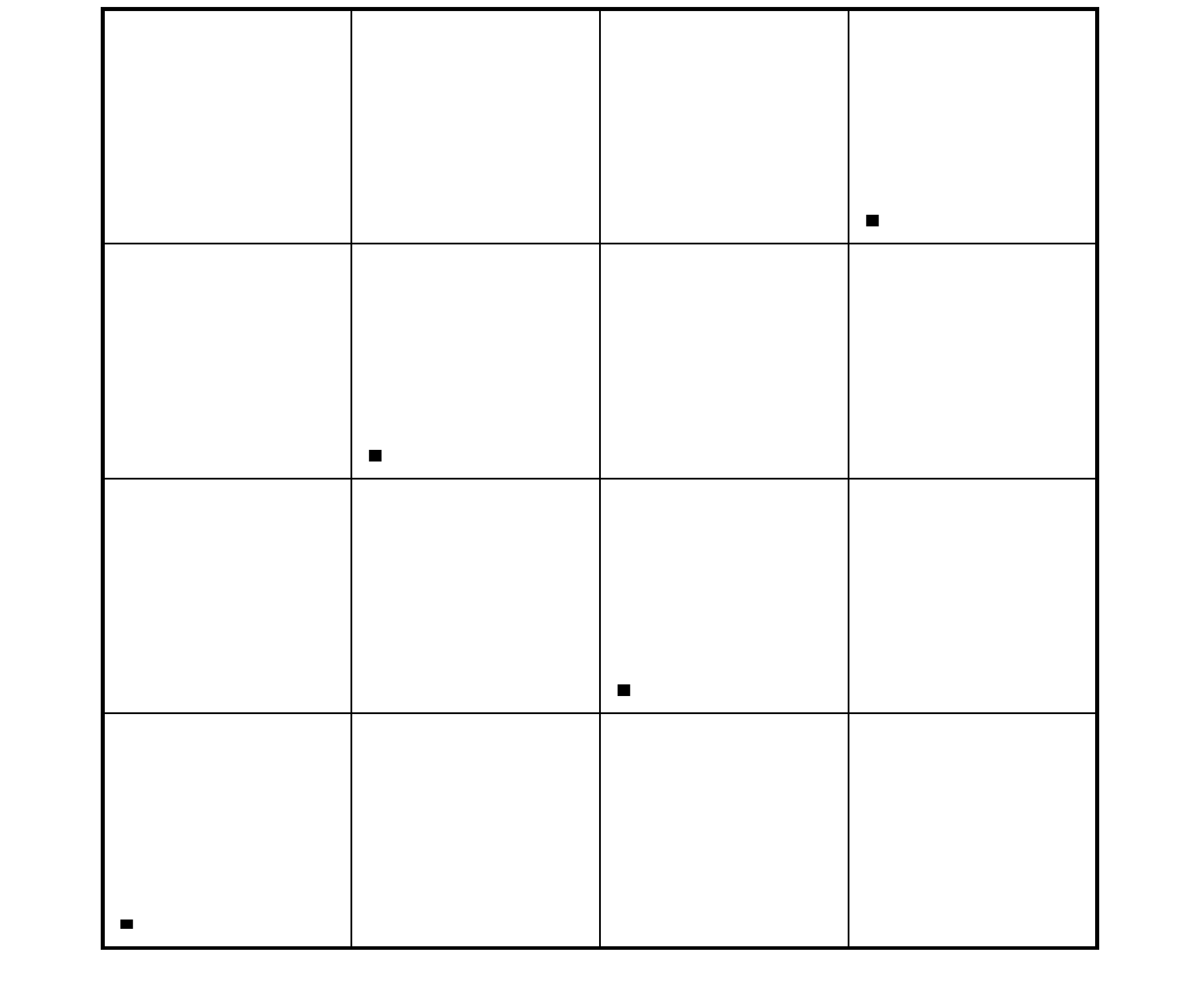}%
\caption{A $(0,2,2)$-net in base $2$. The square can be partitioned into $4$ columns such that each column contains exactly one point. It can also be partitioned into $4$ rows such that each row contains exactly one point, and it can be partitioned into $4$ squares such that each square contains exactly one point.}
\label{fig:dignet1}
\end{figure}

Under certain constraints on the value of $t$, it is known how to construct such $(t,m,s)$-nets. Roughly speaking, $t$ can be chosen independently of $m$, but depends at least linearly on $s$, see \cite{DP10} for more information. Explicit constructions of $(t,m,s)$-nets are known, with the first examples due to Sobol\cprime{} \cite{Sob} and Faure \cite{F82}, before Niederreiter \cite{N87} introduced the general digital construction principle, which we describe in the following.

\subsubsection*{Digital nets}

Let $b$ be a prime number and let $\mathbb{Z}_b = \{0, 1, \ldots, b-1\}$ be the finite field with $b$ elements where we identify the elements with the corresponding integers, but with addition and multiplication carried out modulo $b$. Let $C_1, \ldots, C_s \in \mathbb{Z}_b^{m \times m}$ be $s$ matrices, which determine the digital net (explicit constructions of such matrices are due to Sobol\cprime \cite{Sob}, Faure \cite{F82}, Niederreiter \cite{N87} and others, see \cite{DP10}). 

In the following we describe how to construct the $j$th component $z_{\ell,j}$ of the $\ell$th point $\bz_\ell$ of the digital net. The digital net is given by $\boldsymbol{z}_\ell = (z_{\ell,1}, z_{\ell,2}, \ldots, z_{\ell,s})$ for $\ell = 0, 1, \ldots, b^{m}-1$. 

Let $\ell \in \{0, 1, \ldots, b^{m} - 1\}$. We represent $\ell$ in its base $b$ expansion $\ell = \ell_0 +  \ell_1 b + \cdots +  \ell_{m-1} b^{m-1} $, where $\ell_0, \ell_1, \ldots, \ell_{m-1} \in \mathbb{Z}_b$. We define the corresponding vector $\vec{\ell} = (\ell_0, \ell_1, \ldots, \ell_{m-1})^\top \in \mathbb{Z}_b^{m}$. Let $\vec{z}_{\ell,j} = (z_{\ell,j,1}, z_{\ell,j,2}, \ldots, z_{\ell,j,m})^\top$ be given by $$\vec{z}_{\ell,j} = C_j \vec{\ell}.$$ Then $$z_{\ell,j} = \frac{z_{\ell,j,1}}{b} + \frac{z_{\ell,j,2}}{b^2} + \cdots + \frac{z_{\ell,j,m}}{b^{m}}.$$

A digital net which satisfies the $(t,m,s)$-net property is called a digital $(t,m,s)$-net.

\subsubsection*{Higher order digital nets}

The theory of higher order digital nets was introduced in \cite{D08}. Rather than giving an introduction to this topic in detail, we briefly describe how to construct higher order digital nets from existing digital nets. For more details we refer to \cite[Chapter~15]{DP10}.

Let $\alpha \ge 2$ be an integer. We introduce the digit interlacing function $D_\alpha$ in the following. Let $z_1, z_2, \ldots, z_\alpha \in [0,1)$ with base $b$ expansions $z_j = z_{j,1} b^{-1} + z_{j,2} b^{-2} + \cdots$, with $z_{j,i} \in \{0, 1, \ldots, b-1\}$. Then $D_\alpha: [0,1)^\alpha \to [0,1)$ is given by
\begin{equation*}
D_\alpha(z_1, \ldots, z_\alpha) = \sum_{i=1}^\infty \sum_{j=1}^\alpha \frac{z_{j,i}}{b^{(i-1)\alpha + j}}.
\end{equation*}
For vectors $\bz \in [0,1)^{\alpha s}$ we set $$D_\alpha(\bz) = (D_\alpha(z_1, \ldots, z_\alpha), D_\alpha(z_{\alpha+1}, \ldots, z_{2 \alpha}), \ldots, D_\alpha(z_{(s-1)\alpha+1}, \ldots, z_{\alpha s})).$$

We construct an order $\alpha$ digital $(t,m,s)$-net as follows. Let $\{\bz_0, \bz_1, \ldots, \bz_{b^{m}-1}\} \subset [0,1)^{\alpha s}$ be a digital $(t,m,\alpha s)$-net. Then the point set
\begin{equation}\label{honet}
\{D_\alpha(\bz_0), D_\alpha(\bz_1), \ldots, D_\alpha(\bz_{b^{m}-1}) \} \subset [0,1)^s
\end{equation}
is an order $\alpha$ digital $(t_\alpha, m, s)$-net. This is the so-called digit interlacing construction of higher order digital nets introduced in \cite{D08}. The $t_\alpha$ values satisfy the bound (see \cite[Lemma~15.6]{DP10})
\begin{equation*}
t_\alpha \le \alpha t + \alpha \lfloor s (\alpha-1) /2 \rfloor.
\end{equation*}
We notice that the point set \eqref{honet} is also a $(t_\alpha, m, s)$-net, i.e., every elementary interval $I_{\ba,\bd}$ with $|\bd| = m-t_\alpha$ has exactly $b^{t_\alpha}$ points (see \cite[Proposition~15.8]{DP10}).

\subsection*{The weights when $P$ is a $(t,m,s)$ net}
The weights are quite costly to compute with the most expensive part being the counting {number of data points $|\XX_\bd(\bz_\ell)|$ in all elementary intervals}
 for $|\bd|=\nu-q$. Since there are $\binom{s-1+\nu-q}{s-1}$ different vectors $\bd$ with $|\bd|=\nu-q$,
the cost increases exponentially with the dimension.
However, the calculation of the weights simplifies dramatically when $P$ is a $(t,m,s)$-net and $m-t \ge \nu \in \mathbb{N}$. {In this case, there holds $|P_{\boldsymbol{d}}(\boldsymbol{z}_\ell)| = b^{m-\nu+q}$ for $|\boldsymbol{d}|=\nu-q$ and formula for the weights simplifies to}
\begin{align}\label{dignet_form}
W_{\XX, P, \nu, \ell} = & \frac{b^{\nu-m}}{N} \sum_{q=0}^{s-1} (-1)^{-q} {s-1 \choose q} \frac{1}{b^q} \sum_{\satop{ \boldsymbol{d} \in \mathbb{N}_0^s}{|\boldsymbol{d}| = \nu-q}} |\XX_{\boldsymbol{d}}(\boldsymbol{z}_\ell) |. %= \frac{b^{\nu-m}}{N} \left| \XX_\nu(\boldsymbol{z}_\ell) \right|.
\end{align}
Similarly, we also have
\begin{align}\label{dignet_form_y}
W_{\XX, \YY, P, \nu, \ell} = & \frac{b^{\nu-m}}{N} \sum_{q=0}^{s-1} (-1)^q {s-1 \choose q} \frac{1}{b^q} \sum_{\satop{\boldsymbol{d} \in \mathbb{N}_0^s}{| \boldsymbol{d}| = \nu-q}} \sum_{\satop{n=1}{\bx_n \in I_{\boldsymbol{d}}(\boldsymbol{z}_\ell) }}^N y_n. 
\end{align}
Figure~\ref{fig:dignet2} illustrates the fact that the weight-formula essentially computes the ratio of data points in $\XX$ and points in $P$ for given elementary intervals.

\begin{figure}
\includegraphics[trim = {0 300 0 50}, clip, width=0.35\textwidth]{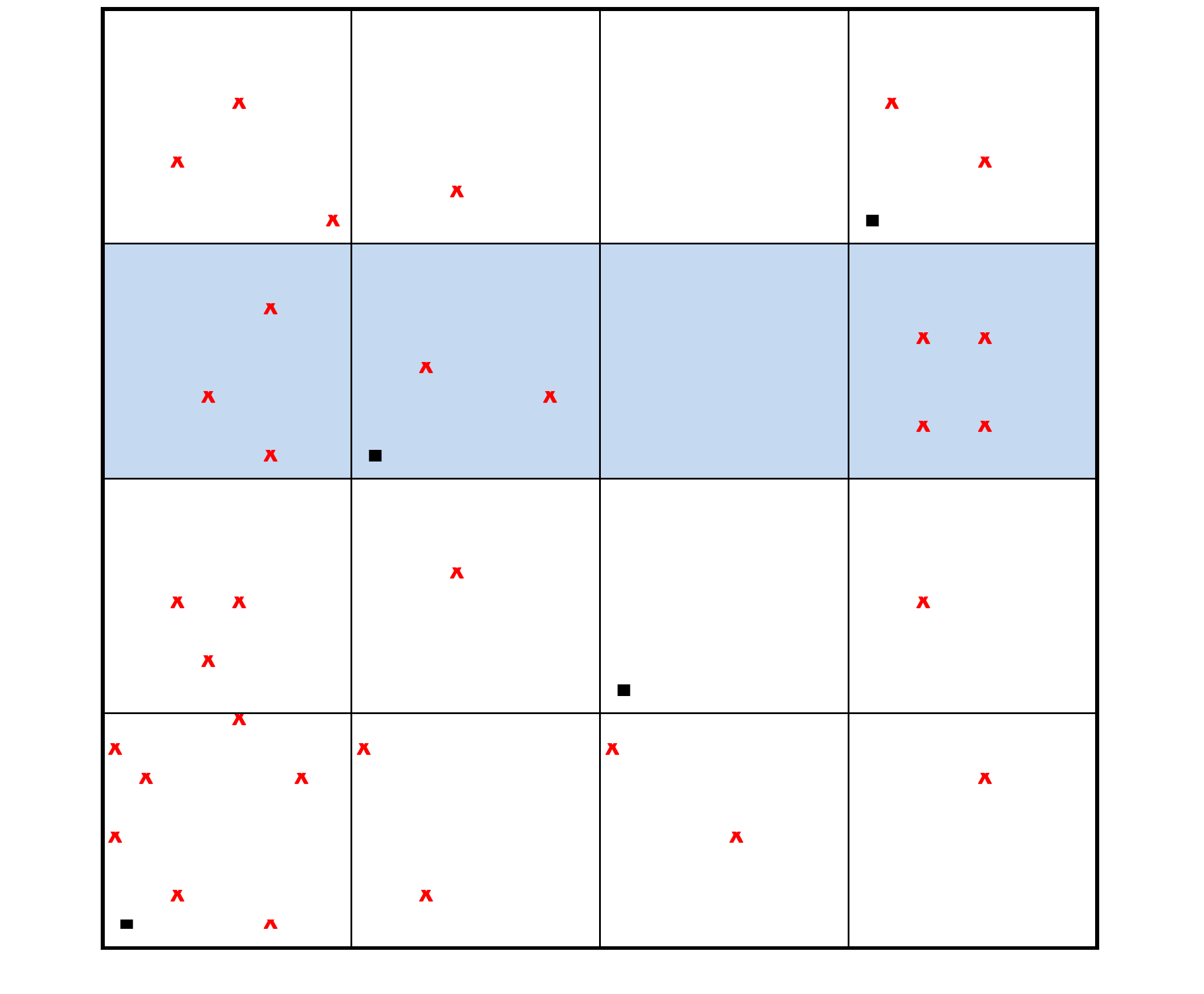}%
\caption{A $(0,2,2)$-net in base $2$ and data points (in red). For a given point of the $(0,2,2)$-net we calculate the proportion of data points in the elementary intervals and use the inclusion-exclusion principle to obtain the weight.}
\label{fig:dignet2}
\end{figure}

\section{Efficient computation of the weights}\label{sec:effalg}

The definition of $W_{\XX, P,\nu, \ell}$ and $W_{\XX, \YY, P,\nu,\ell}$ requires one to compute the 
values $\XX_{\nu}(\boldsymbol{z}_\ell)$ for all points $\boldsymbol{z}_\ell$ in $P$. Since these values are 
derived from the data set and therefore the computational cost depends on $N$, we need an efficient method of computing them. We exploit the fact that we use digital nets for $P$. %While an efficient method for general point sets would be highly desirable, we did not succeed in finding one.

\subsection{Efficient computation of $W_{\XX, P, \nu, \ell}$}

The hardest part in computing the weights $W_{\XX, P, \nu, \ell}$ is the computation of
\begin{equation}\label{eq:defS}
S_{\nu-q}(\boldsymbol{z}_\ell) = \sum_{\boldsymbol{d} \in \mathbb{N}_0^s \atop |\boldsymbol{d}| = \nu-q} |\XX_{\boldsymbol{d}}(\boldsymbol{z}_\ell)|, \quad 0 \le q \le \min\{\nu, s-1\},
\end{equation}
which has to be computed for all $\ell = 0, 1, \ldots, b^{m}-1$. If $\nu-q = 0$ then we have $S_0(\bz_\ell) = |\XX| = N$, and so this case is straightforward. We consider now $\nu-q > 0$.

The idea is the following: In the first step, for a given data point $\bx_n$ we find the smallest elementary interval $I$ with side length of each side at least $b^{-\nu+q}$, which contains the data point and the point $\boldsymbol{z}_\ell$. Given that, for this data point, we count the number of all $\bd$ with $|\bd| = \nu-q$ such that the elementary interval $I \subseteq I_{\bd}(\bz_\ell)$. By doing this for all data points, we obtain the required values. We state the algorithm for a generic point $\bz$.

\begin{algorithm}\label{alg:S}
 \textbf{Input: }$\bz\in [0,1)^s$, $\XX$, and $r = \nu - q \geq 1$.\\
 Set $S_r(\bz)=0$. 
 
 For $n=1,\ldots, N$ do:
 \begin{enumerate}
  \item For $j=1,\ldots,s$ do:
  \begin{enumerate}
  \item[] Find the maximal $i_j \in\{0,\ldots, r\}$ such that
  the first $i_j$ digits of $z_j$ and $x_{n,j}$ are the same, i.e.,
  \begin{align*}
   z_{j,1} = x_{n,j,1}, \ldots, z_{j,i_j} = x_{n,j,i_j}.
  \end{align*}
  \end{enumerate}
  End For
  \item Set $\bi = (i_{1}, i_2, \ldots, i_s)$.
  \item Set $S_r(\bz)=S_r(\bz)+ \#\{\boldsymbol{d} \in \N_0^s: |\boldsymbol{d}|=r, \boldsymbol{d} \leq \bi \}$.
 \end{enumerate}
\quad End For

\noindent
\textbf{Output: } $S_r(\bz)$ %$S(\bz)= \sum_{\bm\in \N_0^s\atop|\bm|=t} \sum_{\ba\in K_\bm-1 \atop \bz\in I_{\ba,\bm}}\#(\XX\cap I_{\ba,\bm})$$
\end{algorithm}
We show below in Lemma~\ref{lem:algSN} that the numbers $S_r(\bz)$ computed by Algorithm~\ref{alg:S} are indeed equal to  $\sum_{\boldsymbol{d} \in \N_0^s\atop|\boldsymbol{d} |=r} | \XX_{\bd}(\bz)|$.

In Algorithm~\ref{alg:S}, one has to calculate the numbers $$N_{r,\bi}:=\#\set{\bd \in \N_0^s}{|\bd|=r, \bd \leq \bi}$$ in an efficient way. To that end, we propose Algorithm~\ref{alg:N} below. The idea of this algorithm is to update the numbers $N_{r,\boldsymbol{i}}$ coordinate by coordinate. Explicitly stated, let $N_{j, r,\boldsymbol{i} }$ be the number of vectors $(d_1, \ldots, d_{j-1}) \in \mathbb{N}_0^{j-1}$ such that $0 \le d_v \le i_v$ for $1 \le v \le j-1$ and $d_1 + \cdots + d_{j-1} = r$. Then we have the recursive identity
 \begin{align*}
  N_{j,r,\boldsymbol{i} }=\sum_{w=\max\{r-i_j,0\}}^r N_{j-1,w,\boldsymbol{i} },
 \end{align*}
 since for each $(d_1, \ldots, d_{j-1})$ with $\max\{r-i_j, 0\} \le d_1 + \cdots + d_{j-1} \le r$, we can add a new coordinate $0 \le d_j \le i_j$ such that $d_1 + \cdots + d_{j-1} + d_j = r$.
 
 Since we only need the result in dimension $s$ but not the intermediate dimensions, we can overwrite the numbers from the previous dimension in each iteration.

\begin{algorithm}\label{alg:N}
 \textbf{Input: }$\boldsymbol{i} \in\N_0^s$ and $r\geq 1$.\\
 \begin{enumerate}
  \item For $r' = 0, 1, \ldots, r$, set
  \begin{equation*}
       N_{r', \boldsymbol{i}} = \begin{cases} 1 & \mbox{if } r' \le i_1, \\ 0 & \mbox{if } r' > i_1. \end{cases}
  \end{equation*}
  End For
   \item For ${j}=2,\ldots,s$ do:
 \begin{enumerate}
 \item[] For $r'=0,\ldots, r$, set 
 \begin{align*}
  N_{r', \boldsymbol{i}} = \sum_{w =  \max\{r'-i_{j},0\}}^{r'} N_{w, \boldsymbol{i} }.
 \end{align*}
 End For
 \end{enumerate}
 End For
 \end{enumerate}
\textbf{Output:} $N_{r',\boldsymbol{i} } $
\end{algorithm}

%As we have seen before, Algorithm~\ref{alg:N} computes the numbers $$N_{r',\bi} =\#\{ \bd \in \N_0^s:  |\bd |=r, \bd \leq \bi \} \mbox{ for all } 0\leq r'\leq r.$$

\begin{lemma}\label{lem:algSN}
Algorithm~\ref{alg:S} computes the correct value $S_r(\bz)$. Further, the cost of Algorithm~\ref{alg:N} is $\mathcal{O}(rs)$ and the cost of Algorithm~\ref{alg:S} is $\mathcal{O}(rsN)$.
\end{lemma}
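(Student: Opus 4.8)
The plan is to establish the three assertions in turn: correctness of Algorithm~\ref{alg:S}, the $\OO(rs)$ cost of Algorithm~\ref{alg:N}, and the $\OO(rsN)$ cost of Algorithm~\ref{alg:S}.

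For correctness I would first swap the order of summation and reduce to a per-point count,
\begin{equation*}
 S_r(\bz) = \sum_{\satop{\bd\in\N_0^s}{|\bd|=r}} |\XX_{\bd}(\bz)| = \sum_{n=1}^N \sum_{\satop{\bd\in\N_0^s}{|\bd|=r}} \boldsymbol{1}_{\bx_n\in I_{\bd}(\bz)},
\end{equation*}
so it suffices to identify the inner sum for each fixed $\bx_n$. The conceptual key is the digit characterization of membership: $\bx_n\in I_{\bd}(\bz)$ holds if and only if $\lfloor z_j b^{d_j}\rfloor = \lfloor x_{n,j} b^{d_j}\rfloor$ for every $j$, i.e.\ the first $d_j$ base-$b$ digits of $z_j$ and $x_{n,j}$ coincide, which is exactly the statement that $\bz$ and $\bx_n$ lie in the same cell of the partition $\II_{\bd}$. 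Writing $c_j$ for the true number of leading digits on which $z_j$ and $x_{n,j}$ agree, this reads $d_j\le c_j$. Since $|\bd|=r$ forces $d_j\le r$, the condition $d_j\le c_j$ is equivalent to $d_j\le i_j$ with $i_j:=\min\{c_j,r\}$, which is precisely the truncated agreement length computed in step~(1) of Algorithm~\ref{alg:S}; in particular the truncation at $r$ is harmless. Hence the inner sum equals $\#\{\bd\in\N_0^s: |\bd|=r,\ \bd\le\bi\}=N_{r,\bi}$, and summing over $n$ shows that the value accumulated by Algorithm~\ref{alg:S} is exactly $S_r(\bz)$, provided Algorithm~\ref{alg:N} returns $N_{r,\bi}$.

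To close this gap I would verify Algorithm~\ref{alg:N} by induction on the coordinate index, using the recursion recorded before the algorithm. Setting $M_j(r')=\#\{(d_1,\dots,d_j)\in\N_0^j:\ 0\le d_v\le i_v,\ \sum_v d_v = r'\}$, the base case is $M_1(r')=\boldsymbol{1}_{r'\le i_1}$, matching step~(1), while the identity $M_j(r')=\sum_{w=\max\{r'-i_j,0\}}^{r'}M_{j-1}(w)$ follows by conditioning on the last coordinate $d_j=r'-w\in\{0,\dots,\min\{i_j,r'\}\}$. Thus after the $j$th pass the array stores $M_j(\cdot)$, and after the final pass $N_{r,\bi}=M_s(r)$, as required.

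For the cost of Algorithm~\ref{alg:N}, the initialization is $\OO(r)$ and each of the $s-1$ passes updates $r+1$ entries. Evaluating each windowed sum directly costs $\OO(r)$ and would only yield $\OO(r^2 s)$; the point is that as $r'$ increases by one the window $[\max\{r'-i_j,0\},r']$ gains a single index at the top and loses at most one at the bottom, so the running sum is maintained incrementally at $\OO(1)$ per entry once the previous pass is available. Each pass is then $\OO(r)$ and the total is $\OO(rs)$. I expect this incremental (sliding-window) bookkeeping to be the only non-routine point of the analysis, since the literal double sum in the algorithm is a factor $r$ too slow. Finally, Algorithm~\ref{alg:S} performs, for each of the $N$ data points, the $s$ digit comparisons of step~(1) at cost $\OO(r)$ each, i.e.\ $\OO(rs)$, followed by one invocation of Algorithm~\ref{alg:N} at cost $\OO(rs)$; summing over the $N$ points gives the claimed $\OO(rsN)$.
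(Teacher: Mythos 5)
Your proof is correct and follows essentially the same route as the paper's: the same reduction of $S_r(\bz)$ to a per-data-point count of $\#\{\bd\in\N_0^s : |\bd|=r,\ \bd\le\bi\}$ via the digit-agreement characterization of membership in $I_{\bd}(\bz)$, and the same moving-sum observation that makes each pass of Algorithm~\ref{alg:N} cost $\mathcal{O}(r)$ rather than $\mathcal{O}(r^2)$. You are slightly more explicit on two points the paper glosses over or handles elsewhere---the harmlessness of truncating the agreement length at $r$, and the coordinate-wise induction for Algorithm~\ref{alg:N} (which the paper justifies in the text preceding the algorithm rather than inside the proof)---but the substance is identical.
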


\begin{proof}
To show the correctness of Algorithm~\ref{alg:S}, first notice that
 \begin{align*}
 S_{r}(\bz) = \sum_{\bd \in \N_0^s \atop |\bd|=r } |\XX_{\bd}(\bz)|
= \sum_{n=1}^N  \sum_{\bd \in \N_0^s \atop |\bd|=r}   \b{1}_{\bx_n  \in I_{\bd}(\bz) } = \sum_{n=1}^N \sum_{\bd \in \N_0^s \atop |\bd| = r} \sum_{\ba \in K_{\bd}} \b{1}_{\bx_n, \bz \in I_{\bd, \ba}},
\end{align*}
where $\b{1}_{(\cdot)}$ denotes the indicator function. Thus, for each $\bx_n \in \XX$, we need to count the number of intervals $I_{\ba,\bd}$, $|\bd|=r$, which contain both $\bz$ and $\bx_n$. If $\bz, \bx_n \in I_{\ba,\bd}$, then for each coordinate $j \in \{1, 2, \ldots, s\}$ the first $d_j$ digits of $z_j$ and $x_{n,j}$ have to coincide. In the algorithm, for each coordinate $j$, we first compute the maximum of $i_j \in \{0, 1, \ldots, r\}$ such that at least $i_j$ digits of $z_j$ and $x_{n,j}$ coincide, which implies that $\bx_n \in I_{\bi}(\bz)$. Then for any $\boldsymbol{d} \le \boldsymbol{i}$ we have $\bx_{n} \in I_{\bd}(\bz)$ since $I_{\bd}(\boldsymbol{z}) \supseteq I_{\bi}(\bz)$. Thus $$\sum_{\bd \in \NN_0^s \atop |\bd| = r} \b{1}_{\bx_n \in I_{\bd}(\bz)} = \#\{  \bd \in \mathbb{N}_0^s: |\bd| = r, \bd \le \boldsymbol{i}  \}.$$

Straightforward counting of the steps reveals the statement on the number of operations needed for the algorithms, if one notices that %step~(1a) of Algorithm~\ref{alg:S} can be computed in $\mathcal{O}(\log(r))$ by a binary search (in base $2$ it can be done faster) and that 
$N_{r'}$ in Step~(2) of Algorithm~\ref{alg:N} can be obtained by computing a moving sum of the previous vector. This concludes the proof.
\end{proof}

If the point set $\{\bz_0, \ldots, \bz_{b^m-1}\}$ is a digital net with known upper bound for the $t$-value, Algorithm~\ref{alg:S} can be made even {more efficient by trading the dimension dependence of the constants for a weaker dependence on $L=b^m$.}
\begin{lemma}\label{lem_weight_net}
Let $(\bz_\ell)_{\ell=0,\ldots,b^m-1}$ be a digital $(t,m,s)$-net and let $0 \le r \le m-t$ be an integer. For $\bx_n\in\XX$, let $R_{n,\ell}\in\N$ denote the number of points $\bz_j$, $0\leq j < b^m$ for which $N_{r,\bi}$ in Step~(3) of Algorithm~\ref{alg:S} ({when applied to $\bz=\bz_\ell$}) is non-zero.
Then, Algorithm~\ref{alg:S} can skip all data points $\bx_n\in\XX$ with $R_{n,\ell}\geq \binom{s-1+r}{s-1} b^{m-r}$, using an additional storage of order $N$.
Since $r=\nu-q$ for $q=0,\ldots,s-1$, the cost of computing the weights $W_{\XX,P,\nu,\ell}$, {$\ell=0,\ldots,b^m-1$,} is bounded by
\begin{align*}
\mathcal{O}\Big(N \binom{s-1+\nu}{s-1} b^{m-\nu+s-1}\Big)=\mathcal{O}\Big(N (s-1+\nu)^{s-1} b^{m-\nu+s-1}\Big).
\end{align*}
\end{lemma}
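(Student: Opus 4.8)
The plan is to split the statement into its combinatorial core — an a priori bound on how many net points a single data point can influence — and a routine summation that turns this bound into the claimed running time. The relevant object is the number $N_{r,\bi}$ added in Step~(3) of Algorithm~\ref{alg:S}. First I would record the elementary observation that, for a fixed data point $\bx_n$ and a fixed net point $\bz=\bz_\ell$, the number $N_{r,\bi}=\#\{\bd\in\N_0^s:|\bd|=r,\ \bd\le\bi\}$ is non-zero if and only if $|\bi|=i_1+\cdots+i_s\ge r$, and that the latter is equivalent to $\bz_\ell\in I_\bd(\bx_n)$ for some $\bd$ with $|\bd|=r$. Indeed, $\bd\le\bi$ with $|\bd|=r$ means precisely that $z_{\ell,j}$ and $x_{n,j}$ share their first $d_j$ digits in every coordinate, i.e.\ both points lie in a common $I_{\ba,\bd}$. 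Hence a data point contributes to $\bz_\ell$ exactly when $\bz_\ell\in\bigcup_{|\bd|=r}I_\bd(\bx_n)$.

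With this characterisation the bound $R_n\le\binom{s-1+r}{s-1}b^{m-r}$ follows at once: there are exactly $\binom{s-1+r}{s-1}$ multi-indices $\bd\in\N_0^s$ with $|\bd|=r$ (stars and bars), and since $r\le m-t$ the $(t,m,s)$-net property recalled above gives $|P_\bd(\bx_n)|=b^{m-r}$ for each such $\bd$; a union bound over these (overlapping) intervals yields the claim. This is what justifies the skipping: maintaining a single counter per data point (storage $\mathcal{O}(N)$) that records the number of net points it has already contributed to, we may discard $\bx_n$ as soon as this counter reaches $\binom{s-1+r}{s-1}b^{m-r}$, since by the bound it cannot contribute to any further net point.

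It then remains to convert the per-point bound into the global cost. For fixed $r$ the total number of contributing (data point, net point) pairs is $\sum_{n=1}^N R_n\le N\binom{s-1+r}{s-1}b^{m-r}$; equivalently, processing each pair $(\bd,\bx_n)$ with $|\bd|=r$ and incrementing the $b^{m-r}$ affected net points costs $\mathcal{O}(1)$ per increment, for the same total. Summing over the values $r=\nu-q$, $q=0,\ldots,\min\{s-1,\nu\}$, the factor $b^{m-r}=b^{m-\nu+q}$ grows geometrically in $q$ while $\binom{s-1+\nu-q}{s-1}$ only decreases, so the sum is dominated, up to the constant geometric factor $\tfrac{1}{1-1/b}\le 2$, by its term of largest $q$; bounding every binomial coefficient by $\binom{s-1+\nu}{s-1}$ and each $b^{m-\nu+q}$ by $b^{m-\nu+s-1}$ gives the stated $\mathcal{O}(N\binom{s-1+\nu}{s-1}b^{m-\nu+s-1})$, and the final form follows from $\binom{s-1+\nu}{s-1}\le(s-1+\nu)^{s-1}$.

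The step I expect to require the most care is not the counting but arguing that the skipping actually realises the saving, i.e.\ that the reorganised computation never pays for non-contributing pairs. A naive index-ordered sweep still inspects every active data point at every net point, and because the indices $\ell$ with $\bz_\ell\in I_\bd(\bx_n)$ form a coset of a subspace of $\Z_b^m$ rather than a contiguous block, a data point may stay active across the whole range while contributing only $\binom{s-1+r}{s-1}b^{m-r}$ times. The resolution is to exploit the digital structure directly: for each $\bx_n$ and each $\bd$ with $|\bd|=r$ one locates the $b^{m-r}$ net points of $I_\bd(\bx_n)$ by solving the associated linear system over $\Z_b$ and enumerating its solution space, so that exactly the contributing pairs are touched. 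Verifying that this enumeration, together with the bookkeeping in the $\mathcal{O}(N)$ storage, costs $\mathcal{O}(1)$ amortised per contributing pair is the technical heart of the running-time claim.
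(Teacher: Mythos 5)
Your combinatorial core coincides exactly with the paper's proof: the paper likewise identifies the quantity added in Step~(3) with $S_r(\bz_\ell,\bx_n)=\sum_{|\bd|=r}\b{1}_{\bx_n\in I_\bd(\bz_\ell)}$, sums over $\ell$ to get $\sum_{\ell}S_r(\bz_\ell,\bx_n)=\sum_{|\bd|=r}|P_\bd(\bx_n)|\le\binom{s-1+r}{s-1}b^{m-r}$ (stars and bars plus the $(t,m,s)$-net property, i.e.\ your union bound), concludes that each $\bx_n$ contributes nontrivially at no more than that many net points, and then simply asserts that tracking a counter per data point permits discarding; the summation over $q=0,\ldots,s-1$ is handled just as tersely as you handle it. Where you genuinely differ is your final paragraph, and there you have put your finger on a real gap that the paper's own proof glosses over: identifying the total cost with the number of contributing pairs is not a consequence of the skipping rule alone. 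In an $\ell$-ordered sweep every still-active data point is inspected at every net point; the contributing indices $\{\ell:\bz_\ell\in I_\bd(\bx_n)\}$ form cosets spread over the whole range $0\le \ell<b^m$; and any data point for which some $N_{r,\bi}\ge 2$ (i.e.\ the intervals $I_\bd(\bx_n)$ overlap at a net point) has strictly fewer than $\binom{s-1+r}{s-1}b^{m-r}$ contributing net points, since $\sum_\ell S_r(\bz_\ell,\bx_n)$ equals exactly $\binom{s-1+r}{s-1}b^{m-r}$, and hence never reaches the threshold and is never discarded --- leaving a worst case of order $Nb^m$. Your repair, enumerating for each pair $(\bx_n,\bd)$ the coset of solutions of the associated linear system over $\Z_b$ and scattering increments only to contributing $(n,\bd,\ell)$ triples, is sound: the scatter computes exactly $\sum_{n}\sum_{|\bd|=r}\b{1}_{\bx_n\in I_\bd(\bz_\ell)}=S_r(\bz_\ell)$ while touching exactly $N\binom{s-1+r}{s-1}b^{m-r}$ triples, and it is this reorganisation (rather than the counter mechanism stated in the lemma) that actually realises the claimed bound, up to the per-increment coset bookkeeping you correctly flag, which costs at most an extra factor $\mathcal{O}(m)$ and can be amortised by Gray-code-style enumeration. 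In short: same counting lemma as the paper, but your treatment of the cost accounting is more careful than the paper's and fills a genuine gap in its proof.
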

\begin{proof}
Define the function $S_r(\bz,\bx):=\sum_{\bd\in\N_0^s\atop |\bd|=r} \b{1}_{\bx\in I_{\bd}(\bz)}$ such that $S_r(\bz_\ell, \bx_n) = N_{r, \bi}$ in Step (3) of Algorithm~\ref{alg:S}.
By definition of $S_r(\bz)$ in~\eqref{eq:defS}, there holds $S_r(\bz)=\sum_{n=1}^N S_r(\bz,\bx_n)$ as well as
\begin{align*}
 \sum_{\ell=0}^{{b^m-1}}S_r(\bz_\ell,\bx_n) 
 =\sum_{\bd\in\N_0^s\atop |\bd|=r} \sum_{\ell=0}^{{b^m-1}} \b{1}_{\bx_n\in I_{\bd}(\bz_\ell)} \leq \binom{s-1+r}{s-1}b^{m-r}.
\end{align*}
Hence, for fixed $n\in\{1,\ldots,N\}$, we have $S_r(\bz_\ell,\bx_n)>0$ for less than $\binom{s-1+r}{s-1}b^{m-r}$ 
points $\bz_\ell$. %Note that $S_r(\bz_\ell,\bx_n)= N_{r,\bi}$ in Step~(3) of Algorithm~\ref{alg:S}. 
Hence, keeping track of the number of times $N_{r,\bi}>0$ allows us to discard points $\bz_\ell$. 
This concludes the proof.
\end{proof}

\subsection{Efficient computation of $W_{\XX, \YY, P, \nu, \ell}$}

As for $W_{\XX, P, \nu, \ell}$, we also need to efficiently compute the term
\begin{equation*}
T_{\nu - q}(\bz) = \sum_{\satop{\boldsymbol{d} \in \mathbb{N}_0^s}{| \boldsymbol{d}| = \nu - q}}  \sum_{\satop{n=1}{\bx_n \in I_{\boldsymbol{d}}(\boldsymbol{z}) }}^N y_n, \quad 0 \le q \le \min\{\nu, s-1 \},
\end{equation*}
for all $\bz=\bz_\ell$ with $\ell=0,\ldots,b^{m}-1$. For $\nu-q = 0$ we have $T_0(\bz) = \sum_{n=1}^N y_n$. In the following we consider the case $r > 0$. Notice that if $y_n = 1$ for all $n = 1, 2, \ldots, N$, then $S_r(\bz) = T_r(\bz)$.

We propose the following variation of Algorithm~\ref{alg:S}.
\begin{algorithm}\label{alg:S2}
\textbf{Input: }$\bz\in [0,1]^s$, $\XX$, $\YY$, and $r = \nu - q \geq 1$.\\
Set $T_r(\bz)=0$. 
 
For $n=1,\ldots, N$ do:
 \begin{enumerate}
  \item For $j=1,\ldots,s$ do:
  \begin{enumerate}
  \item[] Find the maximal $i_j  \in\{0,\ldots, r\}$ such that the first $i_j$ digits of $z_j$ and $x_{n,j}$ coincide, i.e.,
  \begin{align*}
   z_{j,1} = x_{n,j,1}, \ldots, z_{j,i_j} = x_{n,j, i_j}.
  \end{align*}

  End For
  \end{enumerate}
  \item Set $\bi = (i_1, i_2, \ldots, i_s)$.  
  \item Set $T_r(\bz)= T_r(\bz)+ y_n \#\set{\bd \in \N_0^s}{|\bd|=r, \bd \leq \bi }$.
 \end{enumerate}
 \quad End For

\noindent 
\textbf{Output: } $T_r(\bz)$ %= \sum_{\bm\in \N_0^s\atop|\bm|=\nu-q} \sum_{\ba\in K_\bm-1 \atop \bz\in I_{\ba,\bm}}\sum_{\bx_n\in \XX\cap I_{\ba,\bm}}y_n$
\end{algorithm}

We reuse Algorithm~\ref{alg:N} to compute the numbers $N_{r,\bi} := \#\set{\bd \in \N_0^s}{|\bd|=r, \bd \leq \bi }$.

\begin{lemma}\label{lem:algS2}
Algorithm~\ref{alg:S2} computes the correct values $T_r(\bz)$. The cost of Algorithm~\ref{alg:S2} is $\mathcal{O}(rsN)$.
\end{lemma}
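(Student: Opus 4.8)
The plan is to adapt the correctness and cost arguments from the proof of Lemma~\ref{lem:algSN} essentially verbatim, the only new ingredient being that every contribution of a data point $\bx_n$ now carries the weight $y_n$. First I would unfold the definition of $T_r(\bz)$ and swap the order of the two sums so as to factor out the weights:
\begin{align*}
 T_r(\bz) = \sum_{\satop{\bd \in \N_0^s}{|\bd|=r}} \sum_{\satop{n=1}{\bx_n \in I_{\bd}(\bz)}}^N y_n = \sum_{n=1}^N y_n \sum_{\satop{\bd \in \N_0^s}{|\bd|=r}} \b{1}_{\bx_n \in I_{\bd}(\bz)}.
\end{align*}
At this point the inner sum is exactly the quantity analysed in the proof of Lemma~\ref{lem:algSN}, so I would invoke that argument directly: for each fixed $n$, letting $i_j$ be the maximal number of leading base-$b$ digits in which $z_j$ and $x_{n,j}$ agree and setting $\bi=(i_1,\ldots,i_s)$, one has $\bx_n \in I_{\bd}(\bz)$ if and only if $\bd \le \bi$, whence the inner sum equals $N_{r,\bi} = \#\set{\bd \in \N_0^s}{|\bd|=r,\ \bd \le \bi}$.

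Substituting this back yields
\begin{align*}
 T_r(\bz) = \sum_{n=1}^N y_n\, N_{r,\bi},
\end{align*}
which is precisely the value accumulated in Step~(3) of Algorithm~\ref{alg:S2}. This proves correctness.

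For the cost I would simply observe that Algorithm~\ref{alg:S2} is identical to Algorithm~\ref{alg:S} apart from multiplying the count $N_{r,\bi}$ by $y_n$ in Step~(3), which is a single additional multiplication per data point. Hence the per-point work is unchanged up to an $\mathcal{O}(1)$ term: Step~(1) costs $\mathcal{O}(rs)$ (comparing at most $r$ digits in each of the $s$ coordinates), while computing $N_{r,\bi}$ via Algorithm~\ref{alg:N} costs $\mathcal{O}(rs)$ by Lemma~\ref{lem:algSN}. Summing over the $N$ data points gives the claimed bound $\mathcal{O}(rsN)$. I do not anticipate any genuine obstacle here, as the statement is a direct weighted analogue of Lemma~\ref{lem:algSN}; the only point requiring minimal care is the interchange of summation order in the first display, which is justified because both sums are finite, after which the combinatorial identity $\sum_{|\bd|=r}\b{1}_{\bx_n\in I_{\bd}(\bz)} = N_{r,\bi}$ is reused without change.
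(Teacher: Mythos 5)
Your proposal is correct and follows essentially the same route as the paper's own proof: both swap the order of summation to write $T_r(\bz)=\sum_{n=1}^N y_n\sum_{|\bd|=r}\b{1}_{\bx_n\in I_{\bd}(\bz)}$, identify the inner sum with $N_{r,\bi}$ via the argument already established in the proof of Lemma~\ref{lem:algSN}, and note that the cost is that of Algorithm~\ref{alg:S} up to one extra multiplication per data point. No gaps.
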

\begin{proof}
 The cost of Algorithm~\ref{alg:S2} is identical to that of Algorithm~\ref{alg:S} and hence follows from Lemma~\ref{lem:algSN}.
 
To show the correctness of Algorithm~\ref{alg:S2}, notice that
 \begin{align*}
 T_{r}(\bz) = &  \sum_{\satop{\boldsymbol{d} \in \mathbb{N}_0^s}{| \boldsymbol{d}|   = r}} \sum_{\satop{n=1}{\bx_n \in I_{\boldsymbol{d}}(\boldsymbol{z}) }}^N y_n =  \sum_{n=1}^N  y_n  \sum_{\bd \in \mathbb{N}_0^s \atop |\bd | = r} \b{1}_{\bx_n \in I_{\bd}(\bz)} = \sum_{n=1}^N y_n N_{r, \bi},
\end{align*}
since in the proof of Lemma~\ref{lem:algSN} we already confirmed that
\begin{align*}
N_{r,\bi}=\sum_{\bd \in \N_0^s\atop|\bd|=r}
\sum_{\ba\in K_\bd }\b{1}_{\bx_n,\bz\in I_{\ba,\bd}}
\end{align*}
and it only remains to add up the $y_n N_{r,\bi}$ for each $\bx_n \in \XX$. This concludes the proof.
\end{proof}
Note that Lemma~\ref{lem_weight_net} applies in the same way for computing the weights $W_{\XX, \YY, P, \nu, \ell}$. We summarize the results of the previous sections in the following theorem.

\begin{theorem}\label{thm:cost2}
The startup cost of computing $\mathrm{app}_L(f_\theta)$ is $\mathcal{O}({s^2} m b^{m} N)$ whereas each 
recomputation with identical data $\XX$ but different $f_\theta$ costs $\mathcal{O}(sb^{m})$. 
{If $P$ is a $(t,m,s)$-net, we can trade a weaker dependence of the startup cost on $L=b^m$ for a stronger dimension dependence in the sense that the cost reduces to
$\mathcal{O}(\min\{s^2m b^m N,N(s-1+\nu)^{s-1}b^{m-\nu+s-1}\})$,} where $\nu\leq m-t$ has to be chosen by the 
user ({we refer to Remark~\ref{rem:cost} below for a detailed discussion of cost vs. error}).
\end{theorem}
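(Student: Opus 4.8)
The statement is purely a matter of cost accounting, so the plan is to assemble the per-operation counts already established in Lemmas~\ref{lem:algSN}, \ref{lem:algS2}, and~\ref{lem_weight_net}, and to sum them over the relevant ranges of the indices $q$ and $\ell$. No new mathematical content is needed beyond these lemmas; everything reduces to bounding the summation ranges correctly.

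For the startup cost I would argue as follows. Computing the weights $W_{\XX,P,\nu,\ell}$ via~\eqref{app_X} requires, for each fixed point $\bz_\ell$, the quantities $S_{\nu-q}(\bz_\ell)$ for every $q=0,\ldots,s-1$, and computing $W_{\XX,\YY,P,\nu,\ell}$ requires the analogous quantities $T_{\nu-q}(\bz_\ell)$. By Lemma~\ref{lem:algSN} a single evaluation of $S_r(\bz_\ell)$ via Algorithm~\ref{alg:S} costs $\mathcal{O}(rsN)$, and by Lemma~\ref{lem:algS2} the same bound holds for $T_r(\bz_\ell)$ via Algorithm~\ref{alg:S2}. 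Since $\nu\le m-t\le m$, every value $r=\nu-q$ satisfies $r\le m$, so each such call costs at most $\mathcal{O}(msN)$. Summing over the $s$ choices of $q$ gives $\mathcal{O}(s^2mN)$ per point $\bz_\ell$, and summing over the $L=b^m$ points $\bz_\ell$ yields the claimed startup bound $\mathcal{O}(s^2mb^mN)$. The two remaining contributions, namely forming the constant $N^{-1}\sum_{n}y_n^2$ at cost $\mathcal{O}(N)$ and assembling the two weighted sums in $\mathrm{app}_L(f_\theta)$ at cost $\mathcal{O}(sb^m)$, are both dominated by this term.

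For the recomputation cost I would observe that, once the $\theta$-independent weights have been stored, a new evaluation of $\mathrm{app}_L(f_\theta)$ only requires evaluating $f_\theta$ at the $L=b^m$ points $\bz_\ell$ and forming the two weighted sums. Under the standing assumption that one evaluation of $f_\theta$ at a point of $[0,1)^s$ costs $\mathcal{O}(s)$ (as for the linear model $f_\theta(\bx)=[1,\bx^\top]\theta$), the $b^m$ evaluations dominate the $\mathcal{O}(b^m)$ multiplications and additions by the weights, giving the stated $\mathcal{O}(sb^m)$ bound.

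Finally, for the digital-net refinement I would simply invoke Lemma~\ref{lem_weight_net}, which bounds the total cost of computing $W_{\XX,P,\nu,\ell}$ over all $\ell$ by $\mathcal{O}(N(s-1+\nu)^{s-1}b^{m-\nu+s-1})$, together with the remark following Lemma~\ref{lem:algS2} that the identical bound applies to $W_{\XX,\YY,P,\nu,\ell}$. Since the generic accounting and this net-specific accounting both furnish valid upper bounds on the same startup cost, taking their minimum yields $\mathcal{O}(\min\{s^2mb^mN,\,N(s-1+\nu)^{s-1}b^{m-\nu+s-1}\})$. I do not expect a genuine obstacle: the only points requiring care are the uniform bound $r\le\nu\le m$ used in the summation over $q$, and the explicit $\mathcal{O}(s)$ assumption on the per-point evaluation cost of $f_\theta$; the rest is straightforward bookkeeping on top of the already-proved lemmas.
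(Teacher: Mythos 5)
Your proposal is correct and follows essentially the same route as the paper: the paper's own proof is a one-line citation of the weight formulas~\eqref{dignet_form}--\eqref{dignet_form_y} together with Lemmas~\ref{lem:algSN}, \ref{lem_weight_net}, and~\ref{lem:algS2}, and your accounting (summing the $\mathcal{O}(rsN)$ cost over $q=0,\ldots,s-1$ and over the $b^m$ points, then taking the minimum with the bound of Lemma~\ref{lem_weight_net}) is exactly the bookkeeping the paper leaves implicit. Your only addition is making explicit the assumption that a single evaluation of $f_\theta$ costs $\mathcal{O}(s)$, which the paper tacitly uses for the recomputation bound.
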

\begin{proof}
{The start-up cost and recomputation cost is derived from the definition of the weights in~\eqref{dignet_form}--\eqref{dignet_form_y} together with the cost of computing $\sum_{\bd\in \N_0^s \atop |\bd|=\nu-q} |\XX_\bd(\bz_\ell)|$ in Lemma~\ref{lem:algSN},~\ref{lem_weight_net}, and~\ref{lem:algS2}.}
\end{proof}

\subsection{Updating the weights for new values of $m$ and $\nu$}

We now consider the situation where $\{W_{\XX, P, \nu, \ell} \}_{\ell=0}^{L-1}$ and $\{W_{\XX, \YY, P, \nu, \ell}\}_{\ell=0}^{L-1}$ have already been computed for some given $(t,m,s)$-net $P$ and some given $\nu$, but now one wants to increase the accuracy of the approximation by increasing $m$ and/or $\nu$ to $m' \ge m$ and $\nu' \ge \nu$.

Let $P' = \{\bz_0, \bz_1, \ldots, \bz_{b^{m'}-1}\}$ be a $(t,m',s)$-net and assume that we previously used the first $b^m$ points of $P'$ to calculate the weights $\{ W_{\XX, P, \nu, \ell}\}$ and $\{W_{\XX, \YY, P, \nu, \ell}\}$, i.e., $P = \{ \bz_0, \bz_1, \ldots, \bz_{b^m-1}\}$. If all the previous values of $S_r(\bz_\ell)$ and $T_r(\bz_\ell)$ were stored for $\ell = 0, 1, \ldots, b^m-1$ and $r \in \{\nu, \nu-1, \ldots, \nu-s+1\}$, then only the new values for $r = \nu', \nu'-1, \ldots, \nu+1$ need to be computed. The weights for the remaining points can be computed using Algorithms~\ref{alg:S}, \ref{alg:N}, and \ref{alg:S2}. Hence the cost of computing the weights $\{W_{\XX, P, \nu, \ell}\}_{\ell=0}^{b^m-1}$, $\{W_{\XX, \YY, P, \nu, \ell}\}_{\ell=0}^{b^m-1}$, $\{ W_{\XX, P', \nu, \ell}\}_{\ell=0}^{b^{m'}-1}$, and $\{W_{\XX, \YY, {P'}, \nu, \ell}\}_{\ell = 0}^{b^{m'}-1}$ is of the same order as computing the latter two sets of weights directly, with an additional storage cost for storing $S_r(\bz_\ell)$ and $T_r(\bz_\ell)$, which is of order $b^m \min\{ \nu, s\}$.

\section{Error Analysis}\label{sec:error}

Before analysing the error of our approximation, we derive the formulae for the weights $W_{\XX, P, \nu, \ell}$ and $W_{\XX, \YY, P, \nu, \ell}$ again by less geometrical means, i.e., we are using a Walsh series expansion of the implicit \emph{density} of the data points $\XX$.

\subsection{Derivation of the weights $W_{\XX, P, \nu, \ell}$ and $W_{\XX, \YY, P,\nu, \ell}$ based on Walsh series}\label{ssec_W}

Let $\omega_{k}$ be the $k$-th Walsh function in base $b \ge 2$ for $k \in \mathbb{N}_0^s$ defined by
\begin{align*}
\omega_k(x) = \mathrm{e}^{\tfrac{2\pi i}{b} (k_0x_1+\ldots + k_{j-1}x_j )}\quad\text{where }k=\sum_{i=0}^{j-1} k_i b^i\text{ and } x = \sum_{i=1}^\infty x_ib^{-i},
\end{align*}
for digits $k_i,x_i\in\{0,\ldots,b-1\}$. For $\bx\in [0,1]^s$, we define the multi-dimensional Walsh functions by
\begin{align*}
 \bomega_\bk (\bx):=\prod_{j=1}^s \omega_{k_j}(x_j).
\end{align*}
For details on Walsh functions, see, e.g.,~\cite[Appendix~A]{DP10}.

In the following we derive the approximations for the loss given in \eqref{dignet_form} and \eqref{dignet_form_y} using the Walsh series expansions of the functions $f_\theta$ and $f_\theta^2$. Since both cases are very similar we can treat them at the same time. We use the function $g$ and the coefficients $c_n$, where, in order to derive the weights $W_{\XX, P, \nu, \ell}$ we set
\begin{align} \label{WX}
g = & f_\theta^2, \nonumber \\
c_n = & 1, 
\end{align}
and to derive the weights $W_{\XX, \YY, P, \nu, \ell}$ we set
\begin{align}\label{WXY}
g = & f_\theta, \nonumber \\
c_n = & y_n.
\end{align}

Let $K \subset \mathbb{N}_0^s$ (to be defined later) be a finite subset. Then we define the functions $g_K = \sum_{\bk \in K} \widehat{g}_{\bk} \omega_{\bk}$ and $g_{-K} = \sum_{\bk \in \mathbb{N}^s \setminus K} \widehat{g}_{\bk} \omega_{\bk}$, where $\widehat{g}_{\bk} = \int_{[0,1]^s} g(x) \overline{\omega_{\bk}(x)} \,\mathrm{d} x$ is the $\bk$-th Walsh coefficient of $g$. Hence $g = g_K + g_{-K}$. We choose $K$ such that $g_{-K}$ is 'small' (to be discussed later), so that we can use $g_K$ as an approximation of $g$. Assume that $\|g-g_K\|_\infty < B_K$ for some bound $B_K$ depending on $K$ and $g$ such that $B_K \to 0$ as $|K| \to \infty$.

Then we have
\begin{align}\label{eq:deriv}
\begin{split}
\frac{1}{N} \sum_{n=1}^N c_n g(\bx_n) &=  \frac{1}{N} \sum_{n=1}^N c_n g_K(\bx_n) + \frac{1}{N} \sum_{n=1}^N c_n g_{-K}(\bx_n) \\
&\approx \frac{1}{N} \sum_{n=1}^N c_n g_K(\bx_n)
=  \int_{[0,1]^s} g_K(\bx) \phi_K(\bx)\,d\bx=  \int_{[0,1]^s} g(\bx) \phi_K(\bx)\,d\bx
\end{split}
\end{align}
for the function $$\phi_K(\bx):= \sum_{\bk\in K} \mu_{\bk}\overline{\bomega_\bk(\bx)}$$ with coefficients
\begin{align}\label{eq:defmu}
 \mu_\bk := \frac{1}{N} \sum_{n=1}^N c_n \bomega_\bk(\bx_n).
\end{align}
The last two equalities in~\eqref{eq:deriv} follow immediately from the orthogonality of Walsh-functions, i.e., 
{\begin{align*}
    \int_{[0,1]^s}\bomega_\bk(\bx) \overline{ \bomega_{\bk'}(\bx) } \,d\bx = \begin{cases} 0 &\bk\neq\bk',\\ 1 &\bk=\bk'.\end{cases}
\end{align*}}
The remaining integral is approximated by a $(t,m,s)$-net $\{\bz_0, \bz_1, \ldots, \bz_{b^{m}-1}\}$ in base $b$ with $b^{m}$ points, i.e.,
\begin{align}\label{eq:deriv1}
\frac{1}{N} \sum_{n=1}^N c_n g(\bx_n) \approx \frac{1}{N} \sum_{n=1}^N c_n g_K(\bx_n) =  \int_{[0,1]^s} g(\bx) \phi_K(\bx) \,d \bx \approx \frac{1}{b^{m}} \sum_{\ell=0}^{b^{m} - 1}g(\bz_\ell)\phi_K(\bz_\ell).
\end{align}

We show that the right-hand side above coincides with $\sum_{\ell=0}^{b^{m}-1} f_\theta^2(\bz_\ell) W_{\XX, P, \nu, \ell}$ if we use \eqref{WX} and with $\sum_{\ell=0}^{b^{m}-1} f_\theta(\bz_\ell) W_{\XX, \YY, P, \nu, \ell}$ if we use \eqref{WXY}. To that end, we require the following lemma.

\begin{lemma}\label{lem:indicator}
For any $\bd \in \mathbb{N}_0^s$ there holds 
\begin{equation*}
\phi_{K_\bd}(\bz) = \frac{b^{|\bd|}}{N} \sum_{n=1 \atop \bx_n \in I_{\bd}(\bz) }^N c_n.
\end{equation*}
If $c_n = 1$ for all $n$, then
\begin{equation*}
\phi_{K_\bd}(\bz) = \frac{ b^{|\bd|} }{N}  |\XX_{\bd}(\bz)|.
\end{equation*}
\end{lemma}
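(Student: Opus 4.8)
The plan is to unwind $\phi_{K_\bd}(\bz)$ directly from its definition and reduce the sum over the product index set $K_\bd$ to a product of one-dimensional character sums. First I would substitute the definition of the coefficients $\mu_\bk$ into $\phi_{K_\bd}(\bz)=\sum_{\bk\in K_\bd}\mu_\bk\overline{\bomega_\bk(\bz)}$ and interchange the (finite) sums over $\bk$ and $n$, which yields
\begin{align*}
\phi_{K_\bd}(\bz)=\frac1N\sum_{n=1}^N c_n\sum_{\bk\in K_\bd}\bomega_\bk(\bx_n)\overline{\bomega_\bk(\bz)}.
\end{align*}
Since $K_\bd=\prod_{j=1}^s\{0,1,\ldots,b^{d_j}-1\}$ is a Cartesian product and $\bomega_\bk=\prod_{j=1}^s\omega_{k_j}$ factorizes over the coordinates, the inner sum splits as
\begin{align*}
\sum_{\bk\in K_\bd}\bomega_\bk(\bx_n)\overline{\bomega_\bk(\bz)}=\prod_{j=1}^s\sum_{k=0}^{b^{d_j}-1}\omega_k(x_{n,j})\overline{\omega_k(z_j)},
\end{align*}
which reduces everything to evaluating a single one-dimensional sum.

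The crux is the one-dimensional identity
\begin{align*}
\sum_{k=0}^{b^{d}-1}\omega_k(x)\overline{\omega_k(z)}=\begin{cases}b^{d}&\text{if the first $d$ digits of $x$ and $z$ coincide,}\\0&\text{otherwise.}\end{cases}
\end{align*}
To establish it I would use the defining formula of the Walsh functions, so that for $0\le k<b^{d}$ one has $\omega_k(x)\overline{\omega_k(z)}=\exp(\tfrac{2\pi\i}{b}\sum_{i=0}^{d-1}k_i(x_{i+1}-z_{i+1}))$. Summing over $k\in\{0,\ldots,b^d-1\}$ amounts to summing independently over each digit $k_i\in\{0,\ldots,b-1\}$, so the sum factorizes over $i=0,\ldots,d-1$ into geometric sums $\sum_{k_i=0}^{b-1}\zeta_i^{k_i}$ with $\zeta_i=\exp(\tfrac{2\pi\i}{b}(x_{i+1}-z_{i+1}))$. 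Each such sum equals $b$ when $x_{i+1}=z_{i+1}$ and equals $0$ otherwise, since $x_{i+1}-z_{i+1}\in\{-(b-1),\ldots,b-1\}$ is divisible by $b$ exactly when the two digits agree. This is just the Walsh analogue of the Dirichlet-kernel/orthogonality computation, and I expect it to be the main obstacle mostly in the bookkeeping: one has to invoke the paper's convention that the base-$b$ expansions are the unique ones with infinitely many digits different from $b-1$, so that ``the first $d_j$ digits of $x_{n,j}$ and $z_j$ agree'' is well defined and coincides with membership in the same elementary sub-interval of length $b^{-d_j}$.

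Finally I would reassemble the coordinates. Since $x$ and $z$ share their first $d_j$ digits in coordinate $j$ precisely when they lie in the same sub-interval of length $b^{-d_j}$, and since $\bx_n\in I_\bd(\bz)$ means exactly that this happens simultaneously in all $s$ coordinates, the product over $j$ collapses to $\prod_{j=1}^s b^{d_j}\b{1}_{(\cdot)}=b^{|\bd|}\b{1}_{\bx_n\in I_\bd(\bz)}$, using $\sum_{j=1}^s d_j=|\bd|$. Substituting back into the expression for $\phi_{K_\bd}(\bz)$ gives
\begin{align*}
\phi_{K_\bd}(\bz)=\frac{b^{|\bd|}}{N}\sum_{n=1}^N c_n\,\b{1}_{\bx_n\in I_\bd(\bz)}=\frac{b^{|\bd|}}{N}\sum_{\satop{n=1}{\bx_n\in I_\bd(\bz)}}^N c_n,
\end{align*}
which is the first claim; specializing to $c_n=1$ turns the remaining sum into the cardinality $|\XX_\bd(\bz)|$ and yields the second claim.
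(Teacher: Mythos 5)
Your proposal is correct and follows essentially the same route as the paper's proof: unwind the definition of $\phi_{K_\bd}$, swap the sums over $\bk$ and $n$, and evaluate $\sum_{\bk\in K_\bd}\bomega_\bk(\bx_n)\overline{\bomega_\bk(\bz)}$ as $b^{|\bd|}$ times the indicator that $\bx_n$ and $\bz$ share an elementary interval. The only cosmetic difference is that the paper establishes the one-dimensional identity via the shift property $\omega_k(x)\overline{\omega_k(y)}=\omega_k(x\ominus_b y)$, whereas you expand the digit-wise geometric sums directly; both amount to the same orthogonality computation.
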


\begin{proof}
We first note
\begin{align*}
 \sum_{k=0}^{b^{d}-1}\omega_k(x) =\sum_{k_0=0}^{b-1}\cdots \sum_{k_{d-1}=0}^{b-1} \mathrm{e}^{2\pi i/b (k_0x_1 + k_1x_2 + \ldots + k_{d-1}x_d )}
 = \begin{cases}
b^d & \text{if } x\in [0,b^{-d})\\
0& \text{else}.
\end{cases}
  \end{align*}
Since Walsh functions satisfy $\omega_k(x)\overline{\omega_k(y)}= \omega_k(x \ominus_b y)$, we obtain
\begin{equation*}
\sum_{k=0}^{b^{d}-1}\omega_k(x)\overline{\omega_k(y)} = \begin{cases} b^d & \mbox{if } x\ominus_b y \in [0,b^{-d}) \\ 0 & \mbox{otherwise},   \end{cases}
\end{equation*}
where the condition is equivalent to $ x, y \in [a b^{-d}, (a+1) b^{-d}) \mbox{ for some } 0 \le a < b^d$. For higher dimensional Walsh functions, we hence obtain
\begin{align}\label{eq:counting}
 \sum_{\bk \in K_\bd} \bomega_{\bk}(x) \overline{\bomega_{\bk}(y)} 
 &=
 \begin{cases} b^{|\bd|} & \mbox{if } x,y  \in I_{\ba,\bd}\\ 0 & \mbox{otherwise}   \end{cases}
\end{align}
for some $\ba \in \N_0^s$ with $a_j\in \{0,\ldots,b^{d_j}-1\}$.
From this, we obtain
\begin{align*}
 \phi_{K_\bd}(\bz) = & \sum_{\bk\in K_\bd} \frac{1}{N} \sum_{n=1}^N c_n \bomega_\bk(\bx_n) \overline{ \bomega_\bk(\bz) } 
 =  \frac{ b^{|\bd|} }{N} \sum_{n=1}^N \sum_{\ba\in K_\bd} c_n \b{1}_{\bx_n,\bz\in I_{\ba,\bd}}  =  \frac{ b^{|\bd|} }{N} \sum_{n=1 \atop \bx_n \in I_{\bd}(\bz) }^N c_n. % \frac{b^{|\bd|}}{N} |\XX_{\bd}(\bz)|.
\end{align*}
The second statement follows from $\sum_{n=1 \atop \bx_n \in I_{\bd}(\bz)}^N 1 = |\XX_{\bd}(\bz)|$. This concludes the proof.
\end{proof}

The function $\phi_K$ is in a sense an approximation of the indicator function. If $K = K_{\bd}$, then as $d_j \to \infty$ for all $j$, the function $\phi_{K_{\bd}} b^{-|\bd|}$ converges to the number of points $\bx_n$ for which $\bx_n = \bz$ and otherwise the function is $0$. The function $\phi_K$ relaxes the equality condition to elementary intervals. We choose $K = K_\nu =\bigcup_{\bd \in \N_0^s\atop |\bd|=\nu} K_\bd$ in~\eqref{eq:deriv1} and use the inclusion-exclusion formula \eqref{inexform} to obtain
\begin{align}\label{eq:incexc}
 \phi_{K_\nu}(\bz) = & \sum_{\bk \in K_\nu} \frac{1}{N} \sum_{n=1}^N c_n \bomega_{\bk}(\bx_n) \overline{\bomega_{\bk}(\bz)} \nonumber \\  = & \sum_{q=0}^{s-1} (-1)^q {s-1 \choose q} \sum_{\bd \in \N_0^s \atop |\bd| - \nu-q} \sum_{\bk \in K_{\bd}} \frac{1}{N} \sum_{n=1}^N c_n \bomega_{\bk}(\bx_n) \overline{\bomega_{\bk}(\bz)} \nonumber \\ = & \sum_{q=0}^{s-1} (-1)^q \begin{pmatrix}s-1 \\ q\end{pmatrix}\sum_{\bd \in \N_0^s\atop |\bd|=\nu-q}\phi_{K_\bd}{(\bz)}.
\end{align} 
Using the approximation \eqref{eq:deriv1} together with Lemma~\ref{lem:indicator} this results in
\begin{align*}
 \frac{1}{N} \sum_{n=1}^N c_n g(\bx_n)  \approx & \frac{1}{b^{m}} \sum_{\ell=0}^{b^{m}-1} g(\bz_\ell) \phi_{K_\nu}(\bz_\ell) \\ = & \frac{1}{b^{m}} \sum_{\ell=0}^{b^{m} - 1} g(\bz_\ell) \sum_{q=0}^{s-1} (-1)^q \begin{pmatrix}s-1\\ q\end{pmatrix}\sum_{\bd\in \N_0^s\atop |\bd|=\nu-q} \phi_{K_\bd}(\bz_\ell)\\
 = & \sum_{\ell= 0}^{b^{m} - 1}g(\bz_\ell) \frac{b^{\nu-m}}{N} \sum_{q=0}^{s-1} (-1)^q \begin{pmatrix}s-1\\ q\end{pmatrix} \frac{1}{b^q}  \sum_{\bd \in \N_0^s\atop |\bd|=\nu-q}  \sum_{n=1 \atop \bx_n \in I_{\bd}(\bz_\ell)}^N c_n \\
 = & \begin{cases} \frac{1}{b^{m}} \sum_{\ell= 0}^{b^{m}-1} f_\theta^2(\bz_\ell) W_{\XX, P, \nu, \ell} & \mbox{if } c_n = 1, g=f_\theta^2, \\ \frac{1}{b^{m}} \sum_{\ell= 0}^{b^{m} - 1} f_\theta(\bz_\ell) W_{\XX, \YY, P, \nu, \ell} & \mbox{if } c_n = y_n, g=f_\theta; \end{cases}
\end{align*}
These formulae for the weights are the same as the formulae for digital nets in \eqref{dignet_form} and \eqref{dignet_form_y} which we obtained using geometrical arguments.

\subsection{Error Analysis}

To analyse the error, we need to assume that the predictor $f_\theta$ has sufficient smoothness. More precisely, for $g=f_\theta$ and $g=f_\theta^2$, we require the following norm to be bounded
\begin{align*}
\norm{g}{p, \alpha}^p :=&\sum_{ u \subseteq \{1,\ldots,s\}} \sum_{v \subseteq u} \sum_{\tau\in \{1,\ldots,\alpha-1\}^{|u \setminus v|}}\\
&\int_{[0,1]^{|v|}} \Big| \int_{[0,1]^{s-|v|}} \big(\prod_{j\in v}\partial_{z_j}^\alpha \prod_{j\in u \setminus v}\partial_{z_j}^{\tau_j} \big) g(\bz)\,d\bz_{\{1,\ldots,s\}\setminus v}\Big|^p \,d\bz_{v},
\end{align*}
for some integer $\alpha \ge 2$ and any $1 \le p \le \infty$, with the obvious modifications for $p = \infty$. The notation $\big(\prod_{j\in v}\partial_{z_j}^\alpha \prod_{j\in u \setminus v}\partial_{z_j}^{\tau_j} \big) g(\bz)$ denotes the partial mixed derivative of order $\alpha_j$ or $\tau_j$ in coordinate $j$. This definition is a standard assumption for estimates regarding high-order QMC point
sets and is routinely satisfied for many problems appearing in the field of uncertainty quantification, see, e.g.~\cite{qmcsoa}.

The derivation of the approximation method in Section~\ref{ssec_W} shows that it suffices to control
\begin{align}\label{eq:err1}
{\rm err}_1:=\norm{g -g_{K_\nu}}{L^\infty([0,1]^s)}
\end{align}
to bound the first approximation error in~\eqref{eq:deriv1} (see also \eqref{eq:deriv}) as well as
\begin{align}\label{eq:err2}
{\rm err}_2:= \left|\int_{[0,1]^s} g(\bz)\phi_{K_\nu}(\bz) \,d \bz - \frac{1}{b^{m}} \sum_{\ell=0}^{b^{m} - 1} g(\bz_\ell)\phi_{K_\nu}(\bz_\ell)\right|
\end{align}
to bound the second approximation error in~\eqref{eq:deriv1}. We prove a bound on ${\rm err}_1$ in the following lemma.

\begin{lemma}
Assume that $\|g\|_{p,\alpha} < \infty$ for some integer $\alpha \ge 2$ and some $1 \le p \le \infty$. Then
\begin{equation*}
{\rm err}_1 \lesssim \|g\|_{p, \alpha} \nu^{2s-1} b^{-\nu}.
\end{equation*}
\end{lemma}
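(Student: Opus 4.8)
The plan is to recognise $g-g_{K_\nu}$ as the Walsh tail $\sum_{\bk\notin K_\nu}\hat g_\bk\,\bomega_\bk$ and to estimate it coefficient-wise. Since every multi-dimensional Walsh function satisfies $\|\bomega_\bk\|_{L^\infty([0,1]^s)}=1$, the triangle inequality immediately gives
\begin{align*}
{\rm err}_1=\Big\|\sum_{\bk\notin K_\nu}\hat g_\bk\,\bomega_\bk\Big\|_{L^\infty([0,1]^s)}\le \sum_{\bk\notin K_\nu}|\hat g_\bk|,
\end{align*}
so the whole statement reduces to a decay-plus-counting estimate for the absolute Walsh coefficients over the index set complementary to $K_\nu$.

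First I would make this index set explicit. Unwinding $K_\nu=\bigcup_{|\bd|=\nu}K_\bd$ with $K_\bd=\{\ba:a_j<b^{d_j}\}$, one checks that $\ba\in K_\nu$ holds if and only if $\sum_j(\text{position of the leading digit of }a_j)\le\nu$, i.e.\ if and only if $\mu_1(\ba)\le\nu$ with the quantity $\mu_1$ from the notation section. Hence $\bk\notin K_\nu$ is equivalent to $\mu_1(\bk)\ge\nu+1$. Next I would invoke the standard Walsh-coefficient decay bound for the smoothness class measured by $\norm{g}{p,\alpha}$: for $\norm{g}{p,\alpha}<\infty$ one has $|\hat g_\bk|\lesssim \norm{g}{p,\alpha}\,b^{-\mu_\alpha(\bk)}$ uniformly in $1\le p\le\infty$. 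This is the usual estimate produced by repeated Walsh-type integration by parts; the splitting into $v\subseteq u$ and the exponents $\tau\in\{1,\ldots,\alpha-1\}^{|u\setminus v|}$ in the definition of $\norm{\cdot}{p,\alpha}$ is precisely what is needed to treat coordinates $k_j$ with fewer than $\alpha$ nonzero digits, and the passage from the $L^1$ bound produced by the integration by parts to the $L^p$ norm uses $\|\cdot\|_{L^1([0,1])}\le\|\cdot\|_{L^p([0,1])}$, which is where the $p$-uniformity comes from.

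It then remains to bound $\sum_{\mu_1(\bk)>\nu}b^{-\mu_\alpha(\bk)}$. Because $\alpha\ge2$ implies $\mu_\alpha(\bk)\ge\mu_2(\bk)$, I would first replace $\mu_\alpha$ by $\mu_2$; note this is exactly where the hypothesis $\alpha\ge2$ enters, since for $\alpha=1$ the corresponding sum diverges. I would then group the indices by the value $M=\mu_1(\bk)$ and decompose each group according to the leading-digit positions $(A_1,\ldots,A_s)$ of the coordinates, so that $\sum_jA_j=M$. A one-dimensional computation gives $\sum_{k:\,\mathrm{lead}(k)=A}b^{-\mu_2(k)}\lesssim A\,b^{-A}$, hence the contribution of a fixed composition is $b^{-M}\prod_{j:A_j\ge1}A_j$. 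Summing over the $\lesssim M^{s-1}$ compositions of $M$ into $s$ parts, each product being at most $M^{s}$, yields $\sum_{\mu_1(\bk)=M}b^{-\mu_2(\bk)}\lesssim M^{2s-1}b^{-M}$; this product-of-$s$-factors times number-of-compositions is exactly where the exponent $2s-1$ originates. Finally, since $b\ge2$, the geometric tail satisfies $\sum_{M>\nu}M^{2s-1}b^{-M}\lesssim \nu^{2s-1}b^{-\nu}$, and combining the three estimates gives the claim.

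The main obstacle is twofold. The genuinely technical ingredient is the per-coefficient decay bound $|\hat g_\bk|\lesssim\norm{g}{p,\alpha}b^{-\mu_\alpha(\bk)}$: one must verify that the multi-index bookkeeping built into $\norm{\cdot}{p,\alpha}$ matches the output of the iterated Walsh integration by parts, uniformly in $p$. If one prefers a self-contained argument rather than citing \cite{DP10,qmcsoa}, this is the step that has to be carried out in full. The remaining difficulty is purely combinatorial, namely the composition count giving the polynomial factor $M^{2s-1}$; here the only thing to watch is the handling of the zero coordinates ($A_j=0$, where the one-dimensional factor equals $1$) so that no spurious powers of $M$ are introduced.
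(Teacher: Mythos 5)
Your proposal is correct and follows essentially the same route as the paper's proof: reduce ${\rm err}_1$ to the coefficient-tail sum $\sum_{\bk\notin K_\nu}b^{-\mu_2(\bk)}$ via the standard Walsh-coefficient decay bound (the paper cites \cite[Theorem~14.23]{DP10} for exactly this, with $\mu_2$ in place of $\mu_\alpha$ since only $\alpha\ge 2$ is needed), characterize $\bk\notin K_\nu$ by $\mu_1(\bk)>\nu$, and then perform the same composition-counting argument yielding $M^{2s-1}b^{-M}$ per level and the geometric tail bound $\nu^{2s-1}b^{-\nu}$. Your bookkeeping (one-dimensional sums $\lesssim A_jb^{-A_j}$ multiplied over coordinates) is just a mild reorganization of the paper's count over second-digit positions $n_2<n_1$, and both give the exponent $2s-1$.
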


\begin{proof}
To control~\eqref{eq:err1}, we note that, under the given assumptions,~\cite[Theorem~14.23]{DP10} shows that there holds  
\begin{align*}
&\norm{g-g_{K_\nu}}{L^\infty([0,1]^s)}\lesssim  \|g\|_{p, \alpha}  \sum_{\bk\in\N_0^s\setminus K_\nu} b^{-\mu_{2}(\bk)} \\ 
&\qquad = 
\|g\|_{p, \alpha} \sum_{n_1=\nu+1}^\infty \sum_{a_{1,1}+\ldots +{a_{1,s}}=n_1\atop a_{1,1},\ldots,a_{1,s}\geq 0} 
\sum_{n_2=0}^{n_1-1} \sum_{a_{2,1}+\ldots +a_{2,s}=n_2\atop 0\leq a_{2,i}<a_{1,i},\,i=1,\ldots,s} \sum_{\bk\in\N_0^s \atop k_j = (a_{j,1},a_{j,2},\ldots)_b} b^{-n_1-n_2},
\end{align*}
where $(a_{j,1}, a_{j,2}, \ldots)_b$ signifies the position of the non-zero digits of $k_j$, i.e., $k_j = \kappa_{j,1} b^{a_{j,1}-1} + \kappa_{j,2} b^{a_{j,2}-1} + k'_j$, for some $\kappa_{j,1}, \kappa_{j,2} \in \{1, 2, \ldots, b-1\}$ and $0 \le k'_j < b^{a_{j,2}-1}$. The constant is independent of $g$ and $\nu$. 

If $k_j$ has at least two non-zero digits, there are  $b^{a_{j,2}-1} (b-1)^2$ numbers $k_j \in\N$ with $k_j= (a_{j,1},a_{j,2},\ldots)_b$. If $k_j$ has exactly one non-zero digit, there are $b-1$ choices and if $k_j = 0$ there is only one choice. Hence the above simplifies to
	\begin{align*}
	&\norm{g-g_{K_\nu}}{L^\infty([0,1]^s)}\lesssim \|g\|_{p, \alpha}
	\sum_{n_1=\nu+1}^\infty \sum_{a_{1,1}+\ldots +a_{1,s}=n_1\atop a_{1,1},\ldots,a_{1,s}\geq 1} 
	\sum_{n_2=0}^{n_1-1} \sum_{a_{2,1}+\ldots +a_{2,s}=n_2\atop 0\leq a_{2,i}<a_{1,i},\,i=1,\ldots,s}  b^{- n_1}.
	\end{align*}
	The cardinality of the set $\{(a_{j,1}, \ldots, a_{j,s}): a_{j,1}+\ldots+a_{j,s}=n_j, a_{j,i} \ge 0\}$ is bounded by $\binom{n_j+1}{s-1}\lesssim (n_j+1)^{s-1}$. Hence, there holds
	\begin{align*}
	\norm{g-g_{K_\nu}}{L^\infty([0,1]^s)}&\lesssim \|g\|_{p, \alpha}
 	\sum_{n_1=\nu+1}^\infty (n_1+1)^{s-1} b^{-n_1} 
 	\sum_{n_2=0}^{n_1-1}  (n_2+1)^{s-1} \\ & \lesssim  \|g\|_{p, \alpha}	\sum_{n=\nu+1}^\infty (n+1)^{2s-1} b^{-n} 
 	 \lesssim \|g\|_{p, \alpha} \nu^{2s-1}b^{-\nu},
	\end{align*}
	where the constant is independent of $g$ and $\nu$.
\end{proof}
In the following we deal with the integration error ${\rm err}_2$ defined in \eqref{eq:err2}. First we use digital $(t,m,s)$-nets which achieve almost order one convergence, and in the subsequent section we deal with order $\alpha$ digital $(t,m,s)$-nets which achieve almost order $\alpha$ convergence of the integration error with the usual drawbacks such as stronger dependence of the constants on data dimension and higher $t$-value.

\subsubsection{Order one convergence}

We consider the reproducing kernel
\begin{align*}
K: [0,1]^s \times [0,1]^s \mapsto & \mathbb{R} \\
    K(\by, \bz) = & \prod_{j=1}^s (1 + \min\{1-y_j, 1-z_j\}),
\end{align*}
which defines a reproducing kernel Hilbert space of functions $f, g: [0,1]^s \to \mathbb{R}$ with inner product
\begin{equation*}
\langle f, g \rangle = \sum_{u \subseteq \{1, \ldots, s\}} \int_{[0,1]^{|u|}} \partial_{\bz_u} f(\bz_u, \boldsymbol{1}_{-u}) \partial_{\bz_u} g(\bz_u, \boldsymbol{1}_{-u}) \,\mathrm{d} \bz_u.
\end{equation*}
We consider the function space
\begin{equation*}
H = \{g: [0,1]^s {\to} \mathbb{R}:  \partial_{\bz_u} g(\cdot, \boldsymbol{1}_{-u}) \in L_1([0,1]^{|u|}) \mbox{ for all } u \subseteq \{1, \ldots, s\} \}.
\end{equation*}
For functions $g \in H$ we define the norm
\begin{equation*}
\| g \| = \sum_{u \subseteq \{1, \ldots, s\}} \int_{[0,1]^{|u|}} \left| \partial_{\bz_u} g(\bz_u, \boldsymbol{1}_{-u}) \right| \,\mathrm{d} \bz_u.
\end{equation*}
With these definitions we have for any $g \in H$ that
\begin{equation*}
g(\bz) = \langle g, K(\cdot, \bz) \rangle, \quad \bz \in [0,1]^s.
\end{equation*}

\begin{theorem}\label{thm:err1c}
Assume that $g \in H$ and $\norm{g}{1, 2} < \infty$. Let $\{\bz_0, \bz_1, \ldots, \bz_{b^{m}-1}\}$ be a digital $(t,m,s)$-net in base $b$. Then, there holds for $m-t \geq \nu \in\N$
\begin{align*}
& \left| \frac{1}{N}\sum_{n = 1}^N c_n g(\bx_n) - \frac{1}{b^{m}} \sum_{\ell=0}^{b^{m} - 1} g(\bz_\ell) \sum_{q=0}^{s-1} (-1)^q \begin{pmatrix}s-1\\ q\end{pmatrix}\sum_{\bd\in \N_0^s\atop |\bd|=\nu-q} \phi_{K_\bd}(\bz_\ell) \right| \\ \lesssim & \|g\|  \nu^{s-1} m^{s-1}  b^{-  (m-\nu)} \frac{1}{N} \sum_{n=1}^N |c_n| + \|g\|_{1,2} \nu^{2s-1} b^{-\nu},
\end{align*}
for some constant independent of $N, m, \nu, g, \{c_n\}_n$.
\end{theorem}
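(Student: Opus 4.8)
The plan is to split the total error into the two contributions already isolated in Section~\ref{ssec_W}: the replacement of $g$ by its truncated Walsh expansion $g_{K_\nu}$ at the data points, and the quasi-Monte Carlo integration error for the product $g\,\phi_{K_\nu}$. I would first use the exact identity $\frac1N\sum_{n=1}^N c_n g_{K_\nu}(\bx_n)=\int_{[0,1]^s} g\,\phi_{K_\nu}\,d\bz$, which follows from Walsh orthogonality exactly as in~\eqref{eq:deriv}--\eqref{eq:deriv1}, to write the left-hand side of the claim as $(A)+(B)$ with
\[
(A):=\frac1N\sum_{n=1}^N c_n\big(g-g_{K_\nu}\big)(\bx_n),\qquad (B):=\int_{[0,1]^s} g\,\phi_{K_\nu}\,d\bz-\frac{1}{b^m}\sum_{\ell=0}^{b^m-1}g(\bz_\ell)\phi_{K_\nu}(\bz_\ell).
\]
The term $(A)$ is immediate: $|(A)|\le\|g-g_{K_\nu}\|_{L^\infty}\,\frac1N\sum_{n}|c_n|={\rm err}_1\cdot\frac1N\sum_{n}|c_n|$, and the bound on ${\rm err}_1$ proved above, namely ${\rm err}_1\lesssim\norm{g}{1,2}\,\nu^{2s-1}b^{-\nu}$, supplies the second summand of the claimed estimate.

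The heart of the proof is the integration error $(B)$. Expanding $\phi_{K_\nu}$ by the inclusion--exclusion identity~\eqref{eq:incexc} reduces $(B)$ to the signed sum over $q$ and over $\bd$ with $|\bd|=\nu-q$ of the single-partition errors $(B_\bd):=\int g\,\phi_{K_\bd}\,d\bz-\frac1{b^m}\sum_\ell g(\bz_\ell)\phi_{K_\bd}(\bz_\ell)$. By Lemma~\ref{lem:indicator} the function $\phi_{K_\bd}$ is constant on each elementary interval $I_{\ba,\bd}$ with value $\frac{b^{|\bd|}}N\sum_{\bx_n\in I_{\ba,\bd}}c_n$; using in addition that a $(t,m,s)$-net places exactly $b^{m-|\bd|}$ points in each such interval when $|\bd|\le m-t$, and that $\bx_n\in I_\bd(\bz_\ell)\iff\bz_\ell\in I_\bd(\bx_n)$, I would rewrite $(B_\bd)$ as a data-weighted sum of local integration errors,
\[
(B_\bd)=\frac1N\sum_{n=1}^N c_n\Big(b^{|\bd|}\int_{I_\bd(\bx_n)}g\,d\bz-\frac{1}{b^{m-|\bd|}}\sum_{\bz_\ell\in I_\bd(\bx_n)} g(\bz_\ell)\Big),
\]
so that each bracket is the error of integrating $g$ over the single box $I_\bd(\bx_n)$ by the net points lying in it.

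The key structural fact is that, for $|\bd|\le m-t$, the $b^{m-|\bd|}$ points of the net contained in $I_{\ba,\bd}$ become, after the affine rescaling of that box to $[0,1)^s$, a $(t,m-|\bd|,s)$-net: the elementary intervals refining $I_{\ba,\bd}$ are themselves elementary intervals of the cube, so the net property is inherited. Applying the Koksma--Hlawka inequality for the anchored kernel $K$ on each box — which pairs the $L^1$-type norm $\|g\|$ restricted to the box with the $L^\infty$ star discrepancy $D^*\lesssim (m-|\bd|)^{s-1}b^{-(m-|\bd|)}$ of a digital $(t,m-|\bd|,s)$-net — and noting that the per-box variations are dominated by the global $\|g\|$, I obtain $|(B_\bd)|\lesssim\|g\|\,(m-|\bd|)^{s-1}b^{-(m-|\bd|)}\,\frac1N\sum_n|c_n|$. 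It then remains to collect the sums: there are $\binom{\nu-q+s-1}{s-1}\lesssim\nu^{s-1}$ vectors $\bd$ with $|\bd|=\nu-q$, the factor $b^{-(m-\nu+q)}=b^{-(m-\nu)}b^{-q}$ is geometric in $q$, and $\binom{s-1}{q}$ together with $(m-|\bd|)^{s-1}\le m^{s-1}$ contribute only a constant and the factor $m^{s-1}$; this yields the first summand $\|g\|\,\nu^{s-1}m^{s-1}b^{-(m-\nu)}\frac1N\sum_n|c_n|$.

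The main obstacle is precisely the treatment of $(B)$: since $\phi_{K_\nu}$ is only piecewise constant, the integrand $g\,\phi_{K_\nu}$ is not smooth and a direct application of Koksma--Hlawka to it is unavailable. The decisive idea that circumvents this is the net-propagation property, which recasts each $(B_\bd)$ as a sum of genuine box integration errors over elementary intervals, each governed by a lower-order digital net; verifying this inheritance, checking that the rescaling is compatible with the anchoring of $K$ at $\boldsymbol{1}$, and confirming that the local variations are controlled by the global $\|g\|$ are the technical points demanding care. I would also track the data-dependent factor $\frac1N\sum_n|c_n|$, which in fact enters both $(A)$ and $(B)$ through the magnitudes of $c_n$ and of $\phi_{K_\nu}$; note that for the case $c_n\equiv1$ relevant to $W_{\XX,P,\nu,\ell}$ it equals $1$, so the stated form of the bound is unaffected there.
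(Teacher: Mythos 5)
Your proof is correct, but it takes a genuinely different route from the paper's. The paper bounds ${\rm err}_2$ by applying the Koksma--Hlawka inequality \emph{directly} to the product $g\,\phi_{K_\nu}$ on the whole cube, so your premise that this is ``unavailable'' because $\phi_{K_\nu}$ is only piecewise constant is mistaken: piecewise constant functions have finite Hardy--Krause variation, and the entire technical content of the paper's proof is the estimate $V(g\,\phi_{K_\nu})\lesssim \|g\|\binom{\nu+s-1}{s-1}\frac{b^\nu}{N}\sum_n|c_n|$, obtained by writing $g(\bz)=\langle g,K(\cdot,\bz)\rangle$ and propagating the alternating sums $\Delta(g\,\phi_{K_\nu};J)$ through this kernel representation. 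Your proof replaces this by localization: Lemma~\ref{lem:indicator} turns each single-partition error $(B_\bd)$ into a $c_n$-weighted sum of QMC errors of $g$ alone over the boxes $I_\bd(\bx_n)$; the restriction of the net to such a box, rescaled, is a $(t,m-|\bd|,s)$-net (true, because elementary subintervals of an elementary interval are again elementary intervals of the cube, so the counting property is inherited); and Koksma--Hlawka is then applied box by box to $g$ itself. The two technical points you flag are real but resolvable at the paper's level of rigor: the per-box Hardy--Krause variation, anchored at the box's upper corner, is bounded by a constant (depending only on $s$) times $\|g\|$ via the fundamental theorem of calculus applied in the coordinates outside each face index set $u$, which is precisely the role the reproducing-kernel manipulation plays in the paper. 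Your bookkeeping of the factor $\frac{1}{N}\sum_n|c_n|$ is in fact more careful than the theorem statement itself, which silently omits it from the $\nu^{2s-1}b^{-\nu}$ term; both proofs actually produce it, and it is harmless when $c_n\equiv 1$. As for what each approach buys: yours is more elementary and geometric, with a transparent per-box interpretation of the integration error; the paper's product-variation argument is the one that carries over to the higher-order setting of Theorem~\ref{thm:err1}, where the analogous computation is performed on Walsh coefficients of $g\,\phi_{K_\nu}$, whereas your box-wise argument can only recover order-one convergence, since the structure it exploits is the plain $(t,m,s)$-net property of the restricted point sets.
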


\begin{proof}
It remains to prove a bound on ${\rm err}_2$. To do so we use the Koksma-Hlawka inequality
\begin{equation*}
\left| \int_{[0,1]^s} g(\bz) \phi_{K_\nu}(\bz) \rd \bz - \frac{1}{N} \sum_{\ell=0}^{L-1} g(\bz_\ell) \phi_{K_\nu}(\bz_\ell) \right| \le D^\ast(\{\bz_0, \bz_1, \ldots, \bz_{L-1}\}) V(g \phi_{K_\nu} ),
\end{equation*}
where $V(g \phi_{K_\nu})$ is the variation of $g \phi_{K_\nu}$ in terms of Hardy and Krause (defined below) and $D^\ast(\{\bz_0, \ldots, \bz_{L-1}\})$ is the star-discrepancy of the digital $(t,m,s)$-net. It is known that $D^\ast(\{\bz_0, \ldots, \bz_{L-1}\})$ is of order $m^{s-1} b^{-m}$ (see \cite[Theorem~5.1, 5.2]{DP10}). Hence it remains to prove a bound on the Hardy and Krause variation of $g \phi_{K_\nu}$, which we define in the following.

Let $J = \prod_{j=1}^s [a_j, b_j) \subseteq [0,1)^s$ be a subinterval and let 
\begin{equation*}
\Delta(g;J) = \sum_{u \subseteq \{1, 2, \ldots, s\}} (-1)^{|u|} g(\ba_u, \boldsymbol{b}_{-u}),
\end{equation*}
where $(\ba_u, \boldsymbol{b}_{-u})$ is the vector whose $j$th component is $a_j$ if $j \in u$ and $b_j$ if $j \notin u$. The variation of a function in the sense of Vitali is defined by
\begin{equation*}
V^{(s)}(g) = \sup_{\mathcal{P}} \sum_{J \in \mathcal{P}} |\Delta(g;J)|,
\end{equation*}
where the supremum is extended over all partitions $\mathcal{P}$ of $[0,1)^s$ into subintervals. For example, for a Walsh function $\omega_k$ we have $V^{(1)}(\omega_k) \le b^{\mu(k)}$, and for a Walsh function $\bomega_{\bk}$ we have $V^{(s)}(\bomega_{\bk}) \le b^{\mu(\bk)}$, since $\bomega_{\bk}$ is a piecewise constant function which is constant on elementary intervals of the form $I_{\ba, \mu(\bk)}$. More generally we have
\begin{equation*}
V^{(s)}(\phi_{K_\nu}) \lesssim {s+\nu-1 \choose s-1} \frac{b^\nu}{N} \sum_{n=1}^N |c_n|,
\end{equation*}
since $\phi_{K_\nu}$ can be written as a sum of $\sum_{|\bd|= \nu} \phi_{K_{\bd}}$, with $\phi_{K_{\bd}}$ satisfying $V^{(s)}(\phi_{K_{\bd}}) \le b^{\nu} \frac{1}{N} \sum_{n=1}^N |c_n|$, where $\frac{1}{N} \sum_{n=1}^N |c_n|$ provides a bound on the maximum change of each discontinuity of the piecewise constant functions.

For $1 \le k \le s$ and $1 \le i_1 < i_2 < \cdots < i_k \le s$, let $V^{(k)}(f; i_1, \ldots, i_k)$ be the variation in the sense of Vitali of the restriction of $f$ to the $k$-dimensional face $\{(x_1, \ldots, x_s) \in [0,1]^s: x_j = 1 \mbox{ for } j \neq i_1, i_2, \ldots, i_k\}$. Then the variation of $f$ in the sense of Hardy and Krause is defined by
\begin{equation*}
V(g) = \sum_{k=1}^s \sum_{1 \le i_1 < i_2 < \cdots < i_s \le s} V^{(g)}(f; i_1, i_2, \ldots, i_s).
\end{equation*}
Again we have
\begin{equation*}
V(\phi_{K_\nu}) \lesssim {s+\nu-1 \choose s-1} \frac{b^\nu}{N} \sum_{n=1}^N |c_n|,
\end{equation*}
where the constant only depends on the dimension $s$.

Now consider the product $g \phi_{K_\nu}$. Let $J = \prod_{j=1}^s [a_j, b_j)$, $0 \le a_j < b_j \le 1$ be an interval. Then using the representation $g(\bz) = \langle g, K(\cdot, \bz) \rangle$ we obtain
\begin{align*}
\Delta(g \phi_{K_\nu}; J)  = & \sum_{u \subseteq \{1, \ldots, s\}} (-1)^{|u|} g(\ba_u, \boldsymbol{b}_{-u}) \phi_{K_\nu}(\ba_u, \boldsymbol{b}_{-u})  \\ = & g(\boldsymbol{1}) \sum_{u \subseteq \{1, \ldots, s\}} (-1)^{|u|} \phi_{K_\nu}(\ba_u, \boldsymbol{b}_{-u})  \\ & +  \sum_{\emptyset \neq v \subseteq\{1,\ldots, s\}} \sum_{u \subseteq \{1, \ldots, s\}} (-1)^{|u|} \phi_{K_\nu}(\ba_u, \boldsymbol{b}_{-u}) \\ & \int_{[0,1]^{|v|}} \partial_{\bz_v} g(\bz_v, \boldsymbol{1}_{-v}) \partial_{\bz_v} K((\bz_v, \boldsymbol{1}_{-v}), (\ba_u, \boldsymbol{b}_{-u}){)} \,\mathrm{d} \bz_v.
\end{align*}
We have
\begin{equation*}
\partial_{\bz_v} K((\bz_v, \boldsymbol{1}_{-v}), (\ba_u, \boldsymbol{b}_{-u}) {)}= \begin{cases} (-1)^{|v|} & \mbox{if } \bz_v > (\ba_u, \boldsymbol{b}_{-u})_v, \\ 0 & \mbox{if } \bz_v < (\ba_u, \boldsymbol{b}_{-u})_v, \end{cases}
\end{equation*}
where $\bz_v > (\ba_u, \boldsymbol{b}_{-u})_v$ means that for all $j \in v$ we have $z_j > a_j$ if $j \in u$ and $z_j > b_j$ if $j \notin u$. Substituting this into the last equation we obtain
\begin{align*}
& |\Delta(g \phi_{K_\nu}; J) |  = \left| \sum_{u \subseteq \{1, \ldots, s\}} (-1)^{|u|} g(\ba_u, \boldsymbol{b}_{-u}) \phi_{K_\nu}(\ba_u, \boldsymbol{b}_{-u}) \right|  \\ \le & | g(\boldsymbol{1}) | \left| \sum_{u \subseteq \{1, \ldots, s\}} (-1)^{|u|} \phi_{K_\nu}(\ba_u, \boldsymbol{b}_{-u}) \right| \\ & + \left|  \sum_{\emptyset \neq v \subseteq\{1,\ldots, s\}} (-1)^{|v|} \int_{[0,1]^{|v|}} \partial_{\bz_v} g(\bz_v, \boldsymbol{1}_{-v}) \sum_{u \subseteq \{1, \ldots, s\}} (-1)^{|u|} \phi_{K_\nu}(\ba_u, \boldsymbol{b}_{-u})  \boldsymbol{1}_{\bz_v > (\ba_u, \boldsymbol{b}_{-u})_v} \,\mathrm{d} \bz_v \right| \\ \le & | g(\boldsymbol{1}) | \left| \sum_{u \subseteq \{1, \ldots, s\}} (-1)^{|u|} \phi_{K_\nu}(\ba_u, \boldsymbol{b}_{-u}) \right| \\ & +  \sum_{\emptyset \neq v \subseteq\{1,\ldots, s\}} \int_{[0,1]^{|v|}} | \partial_{\bz_v} g(\bz_v, \boldsymbol{1}_{-v}) | \left| \sum_{u \subseteq \{1, \ldots, s\}} (-1)^{|u|} \phi_{K_\nu}(\ba_u, \boldsymbol{b}_{-u}) \boldsymbol{1}_{\bz_v > (\ba_u, \boldsymbol{b}_{-u})_v} \right|  \,\mathrm{d} \bz_v.
\end{align*}
Let $\mathcal{P}$ be a partition of $[0,1)^s$ into intervals of the form $J$. Then
\begin{align*}
& \sum_{J \in \mathcal{P}} |\Delta(g \phi_{K_\nu}; J) | \le  |g(\boldsymbol{1})| V(\phi_{K_\nu}) \\ &  +   \sum_{\emptyset \neq v \subseteq\{1,\ldots, s\}} \int_{[0,1]^{|v|}} | \partial_{\bz_v} g(\bz_v, \boldsymbol{1}_{-v}) | \sum_{J \in \mathcal{P}} \left| \sum_{u \subseteq \{1, \ldots, s\}} (-1)^{|u|} \phi_{K_\nu}(\ba_u, \boldsymbol{b}_{-u}) \boldsymbol{1}_{\bz_v > (\ba_u, \boldsymbol{b}_{-u})_v} \right|  \,\mathrm{d} \bz_v. %\\ \le & V(\phi_{K_\nu}) \left(|g(\boldsymbol{1}| + \sum_{\emptyset \neq v \subseteq \{1, \ldots, s\}} \int_{[0,1]^{|v|}} |\partial_{\bz_v} g(\bz_v, \boldsymbol{1}_{-v})| \, \mathrm{d} \bz_v \right) = V(\phi_{K_\nu}) \|g\|.
\end{align*}
Similar to $V(\phi_{K_\nu})$, we can also estimate
\begin{equation*}
\sum_{J \in \mathcal{P}} \left| \sum_{u \subseteq \{1, \ldots, s\}} (-1)^{|u|} \phi_{K_\nu}(\ba_u, \boldsymbol{b}_{-u}) \boldsymbol{1}_{\bz_v > (\ba_u, \boldsymbol{b}_{-u})_v} \right| \le {\nu+s-1 \choose s-1} \frac{b^\nu}{N} \sum_{n=1}^N |c_n|.
\end{equation*}
The result now follows by combining this bound with the bound on $V(\phi_{K_\nu})$, the Koksma-Hlawka inequality and the bound on the discrepancy for digital nets.
\end{proof}

Using \eqref{dignet_form} and \eqref{dignet_form_y} in Theorem~\ref{thm:err1c} we obtain the following approximation.
\begin{corollary}\label{corc}
Let $P$ be a digital $(t, m, s)$-net in base $b$. Choose the integer $\nu$ such that $\nu \le m-t$. Assume that $f_\theta, f^2_\theta \in H$ and $\|f_\theta\|_{1, 2}, \|f_\theta^2\|_{1, 2} < \infty$ for all parameters $\theta$. Then
\begin{align*}
\left| {\rm err}(f_\theta) - {\rm app}_{b^m}(f_\theta) \right| & \lesssim  \|f_\theta^2\| \nu^{s-1} m^{s-1} b^{- (m-\nu)} + \|f_\theta^2\|_{1,2} \nu^{2s-1} b^{-\nu}  \\ & + \|f_\theta\| \nu^{s-1} m^{s} b^{- (m-\nu)} \frac{1}{N} \sum_{n=1}^N |y_n| + \|f_\theta\|_{1,2} \nu^{2s-1} b^{-\nu},
\end{align*}
for some constant independent of $N, m, \nu, \{y_n\}_n, f_\theta$.
\end{corollary}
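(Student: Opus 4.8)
The plan is to obtain the corollary as a direct, two-fold application of Theorem~\ref{thm:err1c}, since all the analytic work (the Koksma--Hlawka estimate of ${\rm err}_2$ and the preceding bound on ${\rm err}_1$) is already packaged there. First I would cancel the common constant term $\frac1N\sum_{n=1}^N y_n^2$ in $\mathrm{err}(f_\theta)$ and $\mathrm{app}_{b^m}(f_\theta)$ and split the remaining difference into its two natural pieces,
\begin{align*}
\mathrm{err}(f_\theta) - \mathrm{app}_{b^m}(f_\theta)
= {}& \Big(\frac{1}{N}\sum_{n=1}^N f_\theta^2(\bx_n) - \sum_{\ell=0}^{b^m-1} f_\theta^2(\bz_\ell)\,W_{\XX,P,\nu,\ell}\Big) \\
&- 2\Big(\frac{1}{N}\sum_{n=1}^N y_n f_\theta(\bx_n) - \sum_{\ell=0}^{b^m-1} f_\theta(\bz_\ell)\,W_{\XX,\YY,P,\nu,\ell}\Big),
\end{align*}
so that each parenthesised expression has exactly the form bounded by the theorem.

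Next I would record the bridge established in Section~\ref{ssec_W}: combining Lemma~\ref{lem:indicator} with the inclusion--exclusion identity \eqref{eq:incexc} and the defining formulae \eqref{dignet_form}, \eqref{dignet_form_y} shows that $\frac{1}{b^m}\phi_{K_\nu}(\bz_\ell)$ equals $W_{\XX,P,\nu,\ell}$ when $c_n=1$ and equals $W_{\XX,\YY,P,\nu,\ell}$ when $c_n=y_n$. Consequently the weighted sums are precisely the quadrature expressions occurring inside the absolute value of Theorem~\ref{thm:err1c}, i.e.\ $\sum_{\ell} f_\theta^2(\bz_\ell)\,W_{\XX,P,\nu,\ell} = \frac{1}{b^m}\sum_{\ell} f_\theta^2(\bz_\ell)\,\phi_{K_\nu}(\bz_\ell)$ and likewise for the second weight. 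With this identification in hand, each of the two differences above is literally the quantity estimated by the theorem.

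I would then invoke Theorem~\ref{thm:err1c} with the two admissible choices of $(g,c_n)$. For the first piece I take $g=f_\theta^2$, $c_n=1$, so that $\frac1N\sum_{n=1}^N|c_n|=1$ and the bound becomes $\|f_\theta^2\|\,\nu^{s-1}m^{s-1}b^{-(m-\nu)} + \|f_\theta^2\|_{1,2}\,\nu^{2s-1}b^{-\nu}$. For the second piece I take $g=f_\theta$, $c_n=y_n$, so that $\frac1N\sum_{n=1}^N|c_n|=\frac1N\sum_{n=1}^N|y_n|$, giving $\|f_\theta\|\,\nu^{s-1}m^{s-1}b^{-(m-\nu)}\frac1N\sum_{n=1}^N|y_n| + \|f_\theta\|_{1,2}\,\nu^{2s-1}b^{-\nu}$. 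The standing assumptions of the corollary ($f_\theta,f_\theta^2\in H$ with finite $\norm{\cdot}{1,2}$ norms, a digital $(t,m,s)$-net, and $\nu\le m-t$) are exactly the hypotheses of the theorem, so both applications are legitimate. The triangle inequality, with the factor $2$ absorbed into the implicit constant, then produces the asserted estimate.

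The main point is that there is no genuinely hard step: the difficulty lives entirely in Theorem~\ref{thm:err1c}, and the corollary is essentially a repackaging. The only place demanding care is the bookkeeping identification of the two weighted sums with the quadrature form via Section~\ref{ssec_W} and the correct specialisation of the factor $\frac1N\sum_{n=1}^N|c_n|$ in the two cases; this is where I would double-check the exponents. (Following this route one in fact obtains $m^{s-1}$ rather than $m^{s}$ in the third summand; since $m^{s-1}\le m^{s}$ for $m\ge 1$, the stated bound still holds, but I would verify this exponent against Theorem~\ref{thm:err1c}.)
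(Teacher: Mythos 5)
Your proposal is correct and follows exactly the paper's (implicit) proof: the paper derives Corollary~\ref{corc} by "using \eqref{dignet_form} and \eqref{dignet_form_y} in Theorem~\ref{thm:err1c}", which is precisely your splitting of ${\rm err}(f_\theta)-{\rm app}_{b^m}(f_\theta)$ into the two pieces and the two applications of the theorem with $(g,c_n)=(f_\theta^2,1)$ and $(g,c_n)=(f_\theta,y_n)$, combined by the triangle inequality. Your remark on the exponent is also accurate: the direct application gives $m^{s-1}$ in the third summand, so the corollary's stated $m^{s}$ is simply a (harmless) weakening and the bound holds a fortiori.
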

 In order to balance the error  one should choose $m-\nu \approx \nu$, which implies that $\nu \approx m / 2$. Hence, overall we get an error of order $m^{2(s-1) } b^{-m / 2}$, which corresponds to $\mathcal{O}( \log(L)^{2(s-1)}/\sqrt{L})$. This might not seem like any improvement over Monte Carlo type methods, however, we note that comparable methods do not provide non-probabilistic error estimates.

\subsubsection{Higher order convergence}

In this section we prove bounds on the error using higher order digital nets, which yields higher rates $\mathcal{O}(L^{-1+\eps})$ of convergence provided that the predictor $f_\theta$ satisfies some smoothness assumptions.

\begin{theorem}\label{thm:err1}
Let $\norm{g}{2,\alpha}<\infty$ for some integer $\alpha \ge 2$. Let $\{\bz_0, \ldots, \bz_{b^{m}-1}\}$ be an order $\alpha$ digital $(t_\alpha,m,s)$-net in base $b$. Then, there holds for $m-t_\alpha \geq \nu \in\N$
\begin{align*}
& \left| \frac{1}{N}\sum_{n = 1}^N c_n g(\bx_n) - \frac{1}{b^{m}} \sum_{\ell=0}^{b^{m} - 1} g(\bz_\ell) \sum_{q=0}^{s-1} (-1)^q \begin{pmatrix}s-1\\ q\end{pmatrix}\sum_{\bd\in \N_0^s\atop |\bd|=\nu-q} \phi_{K_\bd}(\bz_\ell) \right| \\ \lesssim & \|g\|_{2, \alpha} \left( m^{\alpha s}  b^{- \alpha (m-\nu)} \frac{1}{N} \sum_{n=1}^N |c_n| + \nu^{2s-1} b^{-\nu} \right),
\end{align*}
for some constant independent of $N, m, \nu, g, \{c_n\}_n$.
\end{theorem}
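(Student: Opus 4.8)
The plan is to follow exactly the split set up before the theorem, bounding the quantity by ${\rm err}_1+{\rm err}_2$ from \eqref{eq:err1}--\eqref{eq:err2}. The first error is already handled by the preceding lemma applied with $p=2$: it contributes the term $\|g\|_{2,\alpha}\nu^{2s-1}b^{-\nu}$, the data factor $\tfrac1N\sum_n|c_n|$ entering through the elementary estimate $\big|\tfrac1N\sum_n c_n(g-g_{K_\nu})(\bx_n)\big|\le\tfrac1N\sum_n|c_n|\,{\rm err}_1$. So essentially all the work goes into ${\rm err}_2$, the error of approximating $\int g\,\phi_{K_\nu}$ by the order-$\alpha$ net. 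Unlike in Theorem~\ref{thm:err1c}, the Koksma--Hlawka route is unavailable: the rate we are after (order $\alpha$) exceeds what the star-discrepancy/Hardy--Krause machinery delivers, and the $\phi$-factor is only piecewise constant. Instead I would work entirely in the Walsh domain and exploit the dual net of the higher-order net.

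Concretely, I would introduce the dual net $\mathcal{D}\subset\N_0^s$ of the digital net, for which $\tfrac1{b^m}\sum_\ell \bomega_\bk(\bz_\ell)$ equals $1$ if $\bk\in\mathcal{D}$ and $0$ otherwise, so that the QMC error of any integrand equals the sum of its Walsh coefficients over $\mathcal{D}\setminus\{\boldsymbol0\}$. Expanding $\phi_{K_\nu}=\sum_{\bk\in K_\nu}\mu_\bk\overline{\bomega_\bk}$ with $|\mu_\bk|\le\tfrac1N\sum_n|c_n|$ (see \eqref{eq:defmu}) and using the multiplicative identity $\bomega_{\bk'}\overline{\bomega_\bk}=\bomega_{\bk'\ominus_b\bk}$, the $\bl$-th Walsh coefficient of $g\,\phi_{K_\nu}$ becomes $\sum_{\bk\in K_\nu}\mu_\bk\,\widehat g_{\bl\oplus_b\bk}$, whence
\[
{\rm err}_2=\Big|\sum_{\bl\in\mathcal{D}\setminus\{\boldsymbol0\}}\sum_{\bk\in K_\nu}\mu_\bk\,\widehat g_{\bl\oplus_b\bk}\Big|\le\frac1N\sum_{n=1}^N|c_n|\sum_{\bl\in\mathcal{D}\setminus\{\boldsymbol0\}}\sum_{\bk\in K_\nu}|\widehat g_{\bl\oplus_b\bk}|.
\]
At this point I would invoke the decay of Walsh coefficients of $\alpha$-smooth functions, $|\widehat g_\bl|\lesssim\|g\|_{2,\alpha}\,b^{-\mu_\alpha(\bl)}$ (as in \cite{DP10}), reducing the matter to the purely combinatorial sum $\sum_{\bl\in\mathcal{D}\setminus\{\boldsymbol0\}}\sum_{\bk\in K_\nu}b^{-\mu_\alpha(\bl\oplus_b\bk)}$.

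I would bound this double sum with two ingredients. First, since $K_\nu=\{\bk:\mu_1(\bk)\le\nu\}$, every $\bk\in K_\nu$ disturbs only digit positions whose coordinate-wise indices sum to at most $\nu$; as the top-$\alpha$ contributing positions of $\bl$ lying above these are preserved under $\oplus_b\bk$, this yields the pointwise lower bound $\mu_\alpha(\bl\oplus_b\bk)\ge\mu_\alpha(\bl)-\alpha\,\mu_1(\bk)\ge\mu_\alpha(\bl)-\alpha\nu$. Second, the order-$\alpha$ $(t_\alpha,m,s)$-net property controls $\mathcal{D}$ through the standard higher-order estimate $\sum_{\bl\in\mathcal{D}\setminus\{\boldsymbol0\}}b^{-\mu_\alpha(\bl)}\lesssim m^{\alpha s}b^{-\alpha m}$, which is exactly the integration bound behind \cite[Ch.~15]{DP10} and \cite{qmcsoa}. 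Combining these should give $\sum_{\bl\in\mathcal{D}\setminus\{\boldsymbol0\}}\sum_{\bk\in K_\nu}b^{-\mu_\alpha(\bl\oplus_b\bk)}\lesssim m^{\alpha s}b^{-\alpha(m-\nu)}$, and multiplying by $\tfrac1N\sum_n|c_n|$ produces the first term of the claimed bound.

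The hard part is the summation over $K_\nu$. The naive route --- bounding each term by $b^{\alpha\nu}b^{-\mu_\alpha(\bl)}$ and multiplying by $|K_\nu|\sim\nu^{s-1}b^{\nu}$ --- loses a spurious factor $b^{\nu}$ and produces $b^{-\alpha(m-\nu)+\nu}$, which is too large. One must therefore avoid the crude $|K_\nu|$ count: for fixed $\bl$ the value $\mu_\alpha(\bl\oplus_b\bk)$ stays equal to $\mu_\alpha(\bl)$ for the overwhelming majority of $\bk\in K_\nu$ (those that do not disturb the top-$\alpha$ positions), and the few shifts that do reduce it are too sparse to matter once the coupling $\sum_j\mu_1(k_j)\le\nu$ is enforced. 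I would therefore isolate a counting lemma asserting $\sum_{\bk\in K_\nu}b^{-\mu_\alpha(\bl\oplus_b\bk)}\lesssim \mathrm{poly}(m)\,b^{\alpha\nu}\,b^{-\mu_\alpha(\bl)}$ uniformly in $\bl$, and prove it by conditioning on the profile $(\mu_1(k_1),\dots,\mu_1(k_s))=(e_1,\dots,e_s)$ with $\sum_j e_j\le\nu$ --- for which the inner sums over the $k_j$ factor coordinate-wise --- and then summing the $\binom{\nu+s-1}{s-1}$ profiles, much as in the tail computation of the ${\rm err}_1$ lemma and Lemma~\ref{lem:inex}. Carrying out this coordinate-wise geometric summation under the coupling constraint is where the genuine technical effort lies; the remaining assembly is bookkeeping.
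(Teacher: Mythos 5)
Your proposal is, in structure, the same proof as the paper's: the identical split into ${\rm err}_1+{\rm err}_2$, with ${\rm err}_1$ handled by the preceding lemma (taking $p=2$), and for ${\rm err}_2$ the identical Walsh-domain argument --- the $\bk$-th Walsh coefficient of $g\phi_{K_\nu}$ is $\sum_{\bk'\in K_\nu}\mu_{\bk'}\widehat{g}_{\bk\oplus_b\bk'}$, the decay $|\widehat{g}_{\bk}|\lesssim\norm{g}{2,\alpha}b^{-\mu_\alpha(\bk)}$ is quoted from \cite[Theorem~14.23]{DP10}, and the conclusion comes from the order-$\alpha$ dual-net estimate $\sum_{\bk\in\mathcal{D}\setminus\{\boldsymbol{0}\}}b^{-\mu_\alpha(\bk)}\lesssim m^{\alpha s}b^{-\alpha m}$, which is exactly what the paper's phrase ``arguing as in the proof of Theorem~15.21 of \cite{DP10}'' amounts to. The one genuine difference is how the crucial combinatorial estimate $\sum_{\bk'\in K_\nu}b^{-\mu_\alpha(\bk\oplus_b\bk')}\lesssim b^{\alpha\nu}b^{-\mu_\alpha(\bk)}$ gets justified: the paper derives it from the pointwise inequality $\mu_\alpha(\bk\oplus_b\bk')\ge\mu_\alpha(\bk)+\mu_\alpha(\bk')-\alpha\nu$ together with $\sum_{\bk'\in K_\nu}b^{-\mu_\alpha(\bk')}=\mathcal{O}(1)$, whereas you keep only the weaker inequality $\mu_\alpha(\bk\oplus_b\bk')\ge\mu_\alpha(\bk)-\alpha\mu_1(\bk')\ge\mu_\alpha(\bk)-\alpha\nu$ and recover the missing summability by conditioning on the profile $(\mu_1(k'_1),\ldots,\mu_1(k'_s))$ and factoring coordinate-wise. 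Your extra caution here is in fact needed: the paper's pointwise inequality is false in general, because adding $\bk'$ can annihilate low-order digits of $\bk$; for $b=2$, $s=1$, $\alpha=2$, $\nu=3$, $\bk=(10110)_2$, $\bk'=(110)_2\in K_3$ one finds $\mu_2(\bk\oplus_2\bk')=5$ while $\mu_2(\bk)+\mu_2(\bk')-\alpha\nu=8+5-6=7$. Your profile route does repair this (the profile classes partition $K_\nu$, each class is a product over coordinates, and the one-dimensional sums obey the desired bound for $\alpha\ge2$), at exactly the ${\rm poly}(m)$ price you flag: summing the $\mathcal{O}(\nu^{s-1})$ profiles yields $\lesssim\nu^{s-1}b^{\alpha\nu}b^{-\mu_\alpha(\bk)}$, so the final estimate carries a power like $m^{\alpha s+s-1}$ instead of $m^{\alpha s}$, leaving the rate $b^{-\alpha(m-\nu)}$ and the theorem's content untouched. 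Two further small points in your favour: the explicit factor $\frac1N\sum_n|c_n|$ you attach to ${\rm err}_1$ is the correct bookkeeping (the theorem's statement tacitly drops it from the $\nu^{2s-1}b^{-\nu}$ term), and your observation that the Koksma--Hlawka route of Theorem~\ref{thm:err1c} cannot reach order $\alpha$ correctly identifies why the proof must move entirely to the Walsh domain.
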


\begin{proof}
It remains to prove a bound on ${\rm err}_2$. Assume that $g = \sum_{\bk\in \N_0^s} \alpha_\bk \bomega_\bk$ for some Walsh-coefficients $\alpha_\bk\in \R$. By the definition of the Walsh functions, there holds
\begin{align*}
g \phi_{K_\nu} = \sum_{\bk\in \N_0^s\atop \bk'\in K_\nu} \alpha_\bk \mu_{\bk'} \bomega_{\bk\ominus \bk'} = \sum_{\bk \in \mathbb{N}_0^s} \left(\sum_{\bk' \in K_\nu} \alpha_{\bk \oplus \bk'} \mu_{\bk'} \right) \bomega_{\bk},
\end{align*}
which means that the $\bk$th Walsh coefficient $\beta_{\bk}$ of $g \phi_{K_\nu}$ reads $\beta_{\bk} = \sum_{\bk' \in K_\nu} \alpha_{\bk \oplus \bk'} \mu_{\bk'}$.

The regularity assumption on $g$ and~\cite[Theorem~14.23]{DP10} imply that $$|\alpha_\bk|\lesssim b^{-\mu_\alpha(\bk)} \norm{g}{2,\alpha},$$ where the constant is independent of $\bk$ and $g$. For the Walsh coefficients of $\phi_{K_\nu}$, we only know $$|\mu_\bk| \lesssim \frac{1}{N} \sum_{n=1}^N |c_n|$$ by the definition of $\mu_\bk$ in~\eqref{eq:defmu}. Thus, for the Walsh-expansion of the product $g \phi_{K_\nu}$, we know that the coefficients behave like 
\begin{align*}
\left| \beta_{\bk} \right| = & \left| \sum_{\bk' \in K_\nu} \alpha_{\bk \oplus \bk'} \mu_{\bk'} \right|  \le   \sum_{\bk' \in K_\nu} |\alpha_{\bk \oplus \bk'} | |\mu_{\bk'}|  \lesssim  \|g\|_{2,  \alpha} \frac{1}{N} \sum_{n=1}^N |c_n| \sum_{\bk' \in K_\nu} b^{-\mu_\alpha(\bk \oplus \bk')} \\ \lesssim & \|g\|_{2, \alpha} \frac{1}{N} \sum_{n=1}^N |c_n| \,\, b^{-\mu_{\alpha}(\bk) + \alpha \nu},
\end{align*}
where the constant is independent of $\bk$ and $g$ and {the last inequality follows from the fact that $\mu_\alpha(\bk \oplus \bk') \geq \mu_\alpha(\bk) +\mu_\alpha(\bk')- \nu\alpha  $
and that $\sum_{\bk' \in K_\nu} b^{-\mu_\alpha(\bk')}<\infty$ uniformly in $\nu$.} Thus, arguing as in the proof of \cite[Theorem~15.21]{DP10}, we obtain
\begin{align*}
 {\rm err}_2\lesssim (m)^{\alpha s}b^{-\alpha(m-\nu)}\norm{g}{2,\alpha} \frac{1}{N} \sum_{n=1}^N |c_n|,
\end{align*}
where the constant is independent of $m, \nu, \{c_n\}_n, g$. This concludes the proof.
\end{proof}

Using \eqref{dignet_form} and \eqref{dignet_form_y} in Theorem~\ref{thm:err1} we obtain the following approximation.
\begin{corollary}\label{cor}
Let $\alpha \ge 2$ be an integer and let $P$ be an order $\alpha$ digital $(t, m, s)$-net in base $b$. Choose the integer $\nu$ such that $\nu \le m-t$. Let $\|f_\theta\|_{2, \alpha}, \|f_\theta^2\|_{2, \alpha} < \infty$ for all parameters $\theta$. Then
\begin{align*}
\left| {\rm err}(f_\theta) - {\rm app}_{b^m}(f_\theta) \right| & \lesssim  \|f_\theta^2\|_{2, \alpha} \left( m^{\alpha s} b^{-\alpha (m-\nu)} + \nu^{2s-1} b^{-\nu} \right) \\ & + \|f_\theta\|_{2, \alpha} \left(m^{\alpha s} b^{-\alpha(m-\nu)} \frac{1}{N} \sum_{n=1}^N |y_n| + \nu^{2s-1} b^{-\nu} \right),
\end{align*}
for some constant independent of $N, m, \nu, \{y_n\}_n, f_\theta$.
\end{corollary}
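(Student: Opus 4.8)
The plan is to obtain this corollary as a direct consequence of Theorem~\ref{thm:err1}, applied twice with the two admissible choices of $(g,c_n)$ identified in Section~\ref{ssec_W}. First I would write out the difference
\[
{\rm err}(f_\theta)-{\rm app}_{b^m}(f_\theta) = \Big(\tfrac{1}{N}\sum_{n=1}^N f_\theta^2(\bx_n) - \sum_{\ell=0}^{b^m-1} f_\theta^2(\bz_\ell) W_{\XX,P,\nu,\ell}\Big) - 2\Big(\tfrac{1}{N}\sum_{n=1}^N y_n f_\theta(\bx_n) - \sum_{\ell=0}^{b^m-1} f_\theta(\bz_\ell) W_{\XX,\YY,P,\nu,\ell}\Big),
\]
observing that the common term $\tfrac{1}{N}\sum_{n=1}^N y_n^2$ present in both ${\rm err}$ and ${\rm app}_{b^m}$ cancels exactly.

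The key observation is that each of the two parenthesised differences is, verbatim, the quantity estimated by Theorem~\ref{thm:err1}. Indeed, combining the identity~\eqref{eq:incexc}, which rewrites $\phi_{K_\nu}(\bz_\ell)$ as the inclusion-exclusion sum $\sum_{q=0}^{s-1}(-1)^q\binom{s-1}{q}\sum_{|\bd|=\nu-q}\phi_{K_\bd}(\bz_\ell)$, with the weight formulae~\eqref{dignet_form} and~\eqref{dignet_form_y} re-derived in Section~\ref{ssec_W}, one has the \emph{exact} equalities $\tfrac{1}{b^m}\sum_\ell f_\theta^2(\bz_\ell)\phi_{K_\nu}(\bz_\ell) = \sum_\ell f_\theta^2(\bz_\ell) W_{\XX,P,\nu,\ell}$ and $\tfrac{1}{b^m}\sum_\ell f_\theta(\bz_\ell)\phi_{K_\nu}(\bz_\ell) = \sum_\ell f_\theta(\bz_\ell) W_{\XX,\YY,P,\nu,\ell}$. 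Hence the first difference is precisely the left-hand side of Theorem~\ref{thm:err1} for $g=f_\theta^2$, $c_n=1$, and the second for $g=f_\theta$, $c_n=y_n$.

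I would then invoke Theorem~\ref{thm:err1} for these two choices. The hypotheses match those of the corollary: $P$ is an order $\alpha$ digital $(t,m,s)$-net whose $t$-value plays the role of $t_\alpha$ in Theorem~\ref{thm:err1}, $\nu\le m-t$ holds, and the finiteness $\|f_\theta^2\|_{2,\alpha},\|f_\theta\|_{2,\alpha}<\infty$ supplies the required regularity. For the first choice $\tfrac{1}{N}\sum_{n=1}^N|c_n|=1$, giving the bound $\|f_\theta^2\|_{2,\alpha}\big(m^{\alpha s}b^{-\alpha(m-\nu)}+\nu^{2s-1}b^{-\nu}\big)$; for the second $\tfrac{1}{N}\sum_{n=1}^N|c_n|=\tfrac{1}{N}\sum_{n=1}^N|y_n|$, giving $\|f_\theta\|_{2,\alpha}\big(m^{\alpha s}b^{-\alpha(m-\nu)}\tfrac{1}{N}\sum_{n=1}^N|y_n|+\nu^{2s-1}b^{-\nu}\big)$. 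Finally I would combine the two estimates via the triangle inequality applied to the displayed decomposition, absorbing the factor $2$ into the implied constant.

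There is no genuine obstacle here, since all the analytic work resides in Theorem~\ref{thm:err1}; the only point requiring care is the bookkeeping of the previous paragraph, namely confirming that the two point-set sums appearing in ${\rm app}_{b^m}$ coincide \emph{exactly} (not merely approximately) with the $\phi_{K_\nu}$-representation used in Theorem~\ref{thm:err1}, so that its error bound transfers without loss. One should also check that the constants remain independent of $N,m,\nu,\{y_n\}_n,f_\theta$, which is immediate: the constants in Theorem~\ref{thm:err1} are independent of $N,m,\nu,g,\{c_n\}_n$, and the dependence on $\{y_n\}_n$ enters only through the explicit factor $\tfrac{1}{N}\sum_{n=1}^N|y_n|$.
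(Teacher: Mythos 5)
Your proposal is correct and is essentially the paper's own argument: the paper proves this corollary in one line by applying Theorem~\ref{thm:err1} with the two choices $(g,c_n)=(f_\theta^2,1)$ and $(g,c_n)=(f_\theta,y_n)$ and identifying the $\phi_{K_\nu}$-sums with the weighted sums via \eqref{dignet_form} and \eqref{dignet_form_y}, exactly as you do. Your bookkeeping is in fact slightly more careful than the paper's (your stated identities $\tfrac{1}{b^m}\sum_\ell g(\bz_\ell)\phi_{K_\nu}(\bz_\ell)=\sum_\ell g(\bz_\ell)W_{\cdot,\nu,\ell}$ carry the correct normalization, whereas the corresponding display at the end of Section~\ref{ssec_W} has a spurious extra factor $\tfrac{1}{b^m}$), so nothing further is needed.
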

 In order to balance the error  one should choose $\alpha (m-\nu) \approx \nu$, which implies that $\nu \approx \frac{\alpha}{\alpha+1} m$. Hence, overall we get an error of order $m^{{\alpha}s} b^{-\frac{\alpha}{\alpha+1} m}$.
\begin{remark}\label{rem:cost}
 Corollary~\ref{cor} together with Theorem~\ref{thm:cost2} show that for $(t,m,s)$-nets with moderate $t$-value, the choice $\nu\simeq m/(1+1/\alpha)$ leads to an error bound 
 \begin{align*}
  {\rm error}=\mathcal{O}(b^{-\alpha m/(1+\alpha)})
 \end{align*}
with a startup cost of
\begin{align*}
 {\rm cost}_{\rm startup} = \mathcal{O}(Nb^{m/(1+\alpha)})
\end{align*}
and an online cost (same data points but different values of $\theta$) of
\begin{align*}
 {\rm cost} = \mathcal{O}(b^{m}),
\end{align*}
where the hidden constants depend exponentially on the dimension but only polynomially on $m$.
\end{remark}

\subsection{Approximation of parameters}

In Corollary~\ref{corc} and Corollary~\ref{cor} we have shown that our data compression method yields an approximation of the squared error. In practice one may be interested in how this approximation changes the choice of parameters. This is a well studied problem in optimization called 'perturbation analysis', see for instance \cite{BS00}. In this section we apply \cite[Proposition~4.32]{BS00} to obtain such a result.

Assume that the optimization problem
\begin{equation*}
\min_{\theta \in \Omega} \mathrm{err}(f_\theta)
\end{equation*}
has a non-empty solution set $S_0$. We say that $\mathrm{err}(f_\theta)$ satisfies the second order growth condition at $S_0$ if there exists a neighborhood $N$ of $S_0$ and a constant $c > 0$ such that
\begin{equation*}
\mathrm{err}(f_\theta) \ge E_0 + c [\mathrm{dist}(x, S_0)]^2, \quad \forall x \in \Omega \cap N,
\end{equation*}
where $E_0 = \inf_{\theta \in \Omega} \mathrm{err}(f_\theta)$ and $\rm{dist}$ is the Euclidean distance.

In many optimization problems one needs to use an approximation algorithm to find an approximate solution. We call $\overline{\theta}$ an $\varepsilon$-solution of $\min_{\theta \in \Omega} \mathrm{err}(f_\theta)$ if 
\begin{equation*}
\mathrm{err}(f_{\overline{\theta}}) \le E_0 + \varepsilon.
\end{equation*}

Define the function
\begin{equation*}
\mathcal{A}(\theta) = \mathrm{err}(f_\theta) - \mathrm{app}_{b^m}(f_\theta).
\end{equation*}
In order to obtain a result on how much the optimal parameter changes by switching from $\mathrm{err}(f_\theta)$ to $\mathrm{app}_{b^m}(f_\theta)$, we need a bound on the gradient $\nabla_\theta \mathcal{A}(\theta)$. We can use Theorem~\ref{thm:err1} to obtain such a result.

Define
\begin{equation*}
A_{p, \alpha} = \max_j \sup_{\theta \in \Omega} \left\|f_\theta \frac{\partial f_\theta}{\partial \theta_j} \right\|_{p, \alpha}
\end{equation*}
and
\begin{equation*}
B_{p,\alpha} = \max_j \sup_{\theta \in \Omega} \left\| \frac{ \partial f_\theta}{\partial \theta_j} \right\|_{p, \alpha}.
\end{equation*}

We can now use Theorem~\ref{thm:err1c} for the order $1$ case and  Theorem~\ref{thm:err1} for the order $\alpha \ge 2$ case, with $c_n = 1$ and $g(\bz) = f_\theta(\bz) \frac{\partial f_\theta(\bz)}{\partial \theta_j}$, assuming that $\|g\|_{2, \alpha}$ is bounded independently of $\theta$ for all $\theta \in \overline{\Omega}$. In the second step we set $c_n = y_n$ and $g(\bz) = \frac{\partial f_\theta(\bz)}{\partial \theta_j}$. Similarly to Corollary~\ref{corc} we obtain
\begin{align*}
\max_j \sup_{\theta \in \Omega} \left| \frac{\partial \mathcal{A}}{\partial \theta_j} \right|  & \lesssim  A_{1,1} \nu^{s-1} m^{s-1} b^{- (m-\nu)} + A_{1,2} \nu^{2s-1} b^{-\nu}  \\ & + B_{1,1} \nu^{s-1} m^{s} b^{- (m-\nu)} \frac{1}{N} \sum_{n=1}^N |y_n| + B_{1,2} \nu^{2s-1} b^{-\nu},
\end{align*}
and similarly to Corollary~\ref{cor} we obtain
\begin{align*}
\max_j \sup_{\theta \in \Omega} \left| \frac{\partial \mathcal{A}}{\partial \theta_j} \right|  \lesssim &  A_{2,\alpha} \left( m^{\alpha s} b^{-\alpha (m-\nu)} + \nu^{2s-1} b^{-\nu} \right) \\ &  + B_{2,\alpha} \left(m^{\alpha s} b^{-\alpha(m-\nu)} \frac{1}{N} \sum_{n=1}^N |y_n| + \nu^{2s-1} b^{-\nu} \right).
\end{align*}
This implies that $\mathcal{A}$ satisfies a Lipschitz condition with modulus $\kappa$ which satisfies
\begin{align*}
\kappa & \lesssim  A_{1,1} \nu^{s-1} m^{s-1} b^{- (m-\nu)} + A_{1,2} \nu^{2s-1} b^{-\nu}   \\
&\qquad+ B_{1,1} \nu^{s-1} m^{s} b^{- (m-\nu)} \frac{1}{N} \sum_{n=1}^N |y_n| + B_{1,2} \nu^{2s-1} b^{-\nu},
\end{align*}
if we use digital $(t,m,s)$-nets, and satisfies
\begin{equation*}
\kappa \lesssim  A_{2,\alpha} \left( m^{\alpha s} b^{-\alpha (m-\nu)} + \nu^{2s-1} b^{-\nu} \right)  + B_{2,\alpha} \left(m^{\alpha s} b^{-\alpha(m-\nu)} \frac{1}{N} \sum_{n=1}^N |y_n| + \nu^{2s-1} b^{-\nu} \right)
\end{equation*}
if we use order $\alpha$ digital $(t,m,s)$-nets.

The following result is \cite[Proposition~4.32]{BS00} applied to our situation.
\begin{theorem}[cf.~{\cite[Proof of Proposition~4.32]{BS00}}] 
Assume that $\mathrm{err}(f_\theta)$ satisfies the second order growth condition with constant $c > 0$ and that the function $\mathrm{err}(f_\theta)- \mathrm{app}_{b^m}(f_\theta)$ is Lipschitz continuous with modulus $\kappa$ on $\Omega \cap N$. Let $\overline{\theta}$ be an $\varepsilon$-solution of $\min_{\theta \in \Omega} \mathrm{app}_{b^m}(f_\theta)$. Then
\begin{equation*}
\mathrm{dist}(\overline{\theta}, S_0) \le c^{-1} \kappa + c^{-1/2} \varepsilon^{1/2}.
\end{equation*} 
\end{theorem}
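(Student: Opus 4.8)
The plan is to fold the three hypotheses---the second order growth of $\mathrm{err}$, the Lipschitz control of the defect $\mathcal{A}=\mathrm{err}-\mathrm{app}_{b^m}$, and the near-optimality of $\overline\theta$ for $\mathrm{app}_{b^m}$---into a single scalar quadratic inequality for $d:=\mathrm{dist}(\overline\theta,S_0)$, and then to solve that inequality for $d$. This is exactly the mechanism of \cite[Proposition~4.32]{BS00}, specialized to the present pair of functionals.

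First I would pick $\theta^\ast\in S_0$ with $|\overline\theta-\theta^\ast|=d$ (the solution set being closed, the distance is attained; otherwise take $\theta^\ast$ within $\delta$ and let $\delta\to0$). Since $\theta^\ast$ minimizes $\mathrm{err}$ we have $\mathrm{err}(f_{\theta^\ast})=E_0$. Provided $\overline\theta\in\Omega\cap N$, the second order growth condition gives $c\,d^2\le \mathrm{err}(f_{\overline\theta})-E_0=\mathrm{err}(f_{\overline\theta})-\mathrm{err}(f_{\theta^\ast})$.

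Next I would insert the splitting $\mathrm{err}=\mathrm{app}_{b^m}+\mathcal{A}$, so that $\mathrm{err}(f_{\overline\theta})-\mathrm{err}(f_{\theta^\ast})=\big(\mathrm{app}_{b^m}(f_{\overline\theta})-\mathrm{app}_{b^m}(f_{\theta^\ast})\big)+\big(\mathcal{A}(\overline\theta)-\mathcal{A}(\theta^\ast)\big)$. The first bracket is at most $\varepsilon$: since $\overline\theta$ is an $\varepsilon$-solution of $\min_\theta\mathrm{app}_{b^m}(f_\theta)$ and $\theta^\ast$ is feasible, $\mathrm{app}_{b^m}(f_{\overline\theta})\le \inf_\theta\mathrm{app}_{b^m}(f_\theta)+\varepsilon\le\mathrm{app}_{b^m}(f_{\theta^\ast})+\varepsilon$. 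The second bracket is at most $\kappa\,|\overline\theta-\theta^\ast|=\kappa\,d$ by the Lipschitz property of $\mathcal{A}$ on $\Omega\cap N$. Combining the three displays yields $c\,d^2\le \kappa\,d+\varepsilon$; solving this quadratic and using $\sqrt{\kappa^2+4c\varepsilon}\le\kappa+2\sqrt{c\varepsilon}$ gives $d\le c^{-1}\kappa+c^{-1/2}\varepsilon^{1/2}$, which is the claim.

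The one genuinely delicate point---and the reason the statement carries the neighborhood $N$---is localization: the growth inequality and the Lipschitz bound only hold on $\Omega\cap N$, so before invoking them I must know that $\overline\theta$ (and the quantity $d$ we are estimating) stays inside this region. I would resolve this exactly as in \cite{BS00}, either by restricting to $\varepsilon$ and $\kappa$ small enough that every $\varepsilon$-solution is automatically in $N$, or by the standard a priori argument showing the relevant iterates cannot escape $N$. Once that is in place the remainder is the elementary chain of inequalities above, with $\kappa$ supplied by the Lipschitz estimates derived just before the theorem.
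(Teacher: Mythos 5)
Your proposal is correct: the chain $c\,d^2 \le \mathrm{err}(f_{\overline\theta})-E_0 \le \varepsilon + \kappa\,d$ followed by solving the quadratic is precisely the argument of \cite[Proposition~4.32]{BS00}, which is all the paper itself invokes (it gives no independent proof beyond that citation). Your handling of the localization to $\Omega \cap N$ is also the right reading of why the neighborhood appears in the hypotheses, so nothing is missing.
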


\section{Numerical experiments}
\subsection{Linear regression}
We simulate a linear regression by testing the approximation quality of the method on the function
\begin{align*}
 f_\theta(x) = (1,x)\theta
\end{align*}
for a weight vector $\theta\in \R^{s+1}$ which is randomly generated.
 The data $\bx_i \in\R^s$, $i=1,\ldots,N$ was generated randomly by sampling a standard normal distribution and scaling the absolute value to the unit cube.  Analogously, we generated the labels $y_i\in [0,1]$, $i=1,\ldots,N$ randomly from a uniform distribution for a problem size  $N=10^6$ in $s=6$ dimensions. We  approximate the error
 \begin{align*}
{\rm err}(f_\theta):=\sum_{i=1}^N(f_\theta(\bx_i)-y_i)^2\approx {\rm app}_L(f_\theta).
\end{align*}
Figure~\ref{fig:regapprox} shows the convergence over 100 samples of weight vectors for different values of $\nu$ and $m$. {We note for a prescribed  accuracy of $\approx 2\cdot 10^{-3}$, the achieved compression rate ${\rm cost}/N$ is on average $\approx 1/10^{3}$.} We use higher-order Sobol points generated by interlacing~\cite{D08}.

\begin{figure}
\psfrag{cost}[cc][cc]{\tiny ${\rm cost}=2^m$, $m = \lceil(1+1/\alpha) \nu+4\rceil$}
\psfrag{1avg}[lc][cc]{\tiny $\alpha=1$, average}
\psfrag{1max}[lc][cc]{\tiny $\alpha=1$, maximum}
\psfrag{2avg}[lc][cc]{\tiny $\alpha=2$, average}
\psfrag{2max}[lc][cc]{\tiny $\alpha=2$, maximum}
\includegraphics[width=0.7\textwidth]{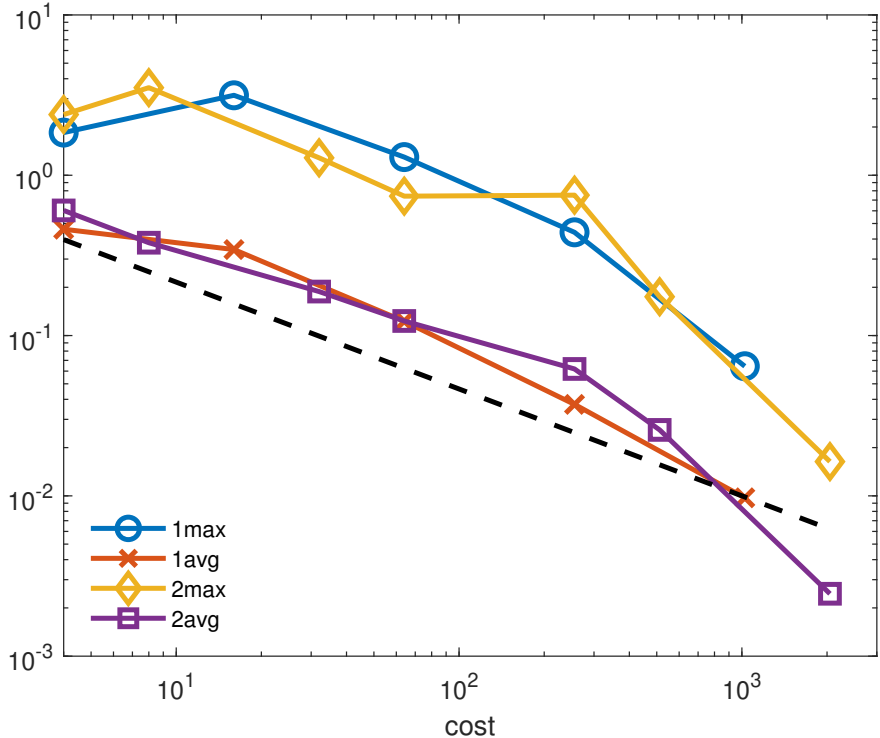}%
\caption{The average and maximal approximation error $|{\rm err}(f_\theta)-{\rm app}_{b^m}(f_\theta)|$ over 100 random samples of $\theta\in\R^6$. We use first and second order digital nets and choose $\nu$ accordingly to obtain the predicted convergence order of $\mathcal{O}({\rm cost}^{\alpha/(1+\alpha)})$ (indicated by the dashed line for $\alpha=2$).}
\label{fig:regapprox} 
\end{figure}

\subsection{Deep neural networks} 
We test the approximation quality on randomly generated deep neural networks by using the MNIST dataset of handwritten digits (see~\url{http://yann.lecun.com/exdb/mnist/}). We use  neural networks of the following layer/node structure: A shallow net defined by
\begin{align*}
\bx \mapsto\fbox{200}\longrightarrow\fbox{1}\mapsto f_\theta(\bx),
\end{align*}
as well as a deep net defined by
\begin{align*}
\bx \mapsto\fbox{200}\longrightarrow\fbox{100}\longrightarrow\fbox{50}\longrightarrow\fbox{20}\longrightarrow\fbox{1}\mapsto f_\theta(\bx),
\end{align*}
where $\theta$ is the vector of weights of the neural network, i.e., for the deep net
\begin{align*}
 f(\bx) = W_4\phi(W_3\phi(W_2\phi(W_1 \bx))),\quad \theta=(W_1,\ldots,W_4)
\end{align*}
for matrices $W_i\in \R^{n_{i+1}\times n_{i}}$, with $n=(200,100,50,20,1)$ and a given activation function $\phi\colon \R\to \R$.
The handwritten digits $x$ are 20x20 greyscale images which show digits from 0 to 9. The labels $\YY=(y_1,y_2,\ldots,y_N)$ contain the correct numbers from 0 to 9. We approximate the error
\begin{align*}
{\rm err}(f_\theta):=\sum_{i=1}^N(f_\theta(\bx_i)-y_i)^2\approx {\rm app}_{b^m}(f_\theta).
\end{align*}
The database contains more than $N=60000$ samples (a couple of them are shown in Figure~\ref{fig:mnist}). 
We subsample the images using a $2\times2$-stencil to reduce the input dimension to $s=100$. 
This allows us to still choose digital nets with reasonably bounded $t$-value for the problem sizes at hand. We use $t$-value optimized Sobol sequences from~\cite{sobolt} which can be downloaded 
from \texttt{https://web.maths.unsw.edu.au/\~{}fkuo/sobol}. For instance, Table~\ref{tval} (which is \cite[Table~3.8]{JK08}) shows the dimensions at which each $t$-value first occurs.
\begin{table}%\label{tval}
\begin{tabular}{c|c|c|c|c|c|c|c|c|c|c|c|c}
$m$ \textbackslash $t$ & 0 & 1 & 2 & 3 & 4 & 5 & 6 & 7 & 8 & 9 & 10 & 11 \\ \hline 
10 &        2 & 3 & 4 & 5 & 9 & 16 & 32 & 76 & 167 & 431 & $>$8300 \\
12 &        2 & 3 & 4 & 6 & 10 & 16 & 34 & 40 & 109 & 242 & 506 & 1049 \\
14 &        2 & 3 & 4 & 6 & 8 & 12 & 22 & 48 & 85 & 164 & 383 & 761 \\
16 &        2 & 3 & 4 & 6 & 8 & 14 & 15 & 35 & 80 & 159 & 280 & 525 \\
18 &        2 & 3 & 4 & 7 & 8 & 11 & 15 & 35 & 70 & 108 & 220 & 393 \\
\end{tabular}
\caption{This table lists the dimensions at which each $t$-value first occurs for Sobol$\cprime$ sequences in base $b = 2$. Taken from \cite[Table~3.8]{JK08}, which contains more values.}\label{tval}
\end{table} 
As activation function, we used the smooth sigmoid function $\phi(x):= \frac{2}{1+e^{-x}}-1$.

\begin{figure}
 \includegraphics[width=0.7\textwidth]{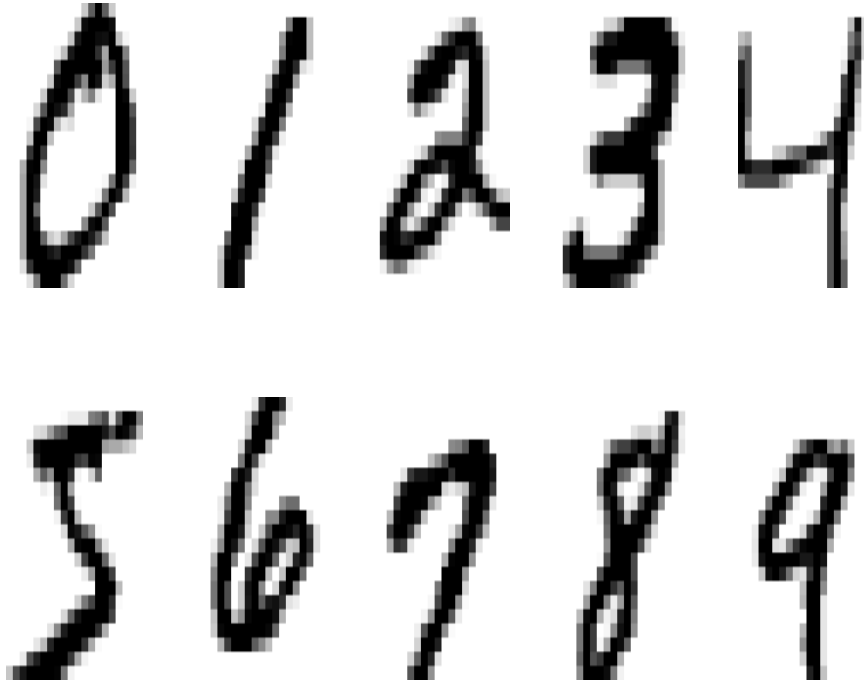}
 \caption{Sample images from the MNIST database of handwritten digits. Each pixel (20x20) contains a gray value between 0 and 1.}
 \label{fig:mnist}
\end{figure}
In Figure~\ref{fig:mnistapprox}, we plot the approximation error over hundred randomly generated weight-vectors $\theta$. 
We observe that compressing the data to
$2^{m}/N\approx 0.13$ of its original size yields an average compression loss of less
than ten percent. The large discrepancy between $\nu$ and $m$ is necessary to offset the fairly large $t$-value 
($\approx 10$) of the 100-dimensional Sobol sequence.
We also plot the distribution of the data points in some arbitrarily selected dimension to illustrate the non-trivial 
density which is implicitly approximated by the function $\phi_K$.
The method seems to be fairly robust with regard to the depth of the neural network as suggested by the similar results for the shallow and for the deep net.
\begin{figure}
\psfrag{density}[cc][cc]{\tiny percentage}
\psfrag{error}[cc][cc]{\tiny ${\rm err}(f_\theta)$}
\includegraphics[width=0.49\textwidth]{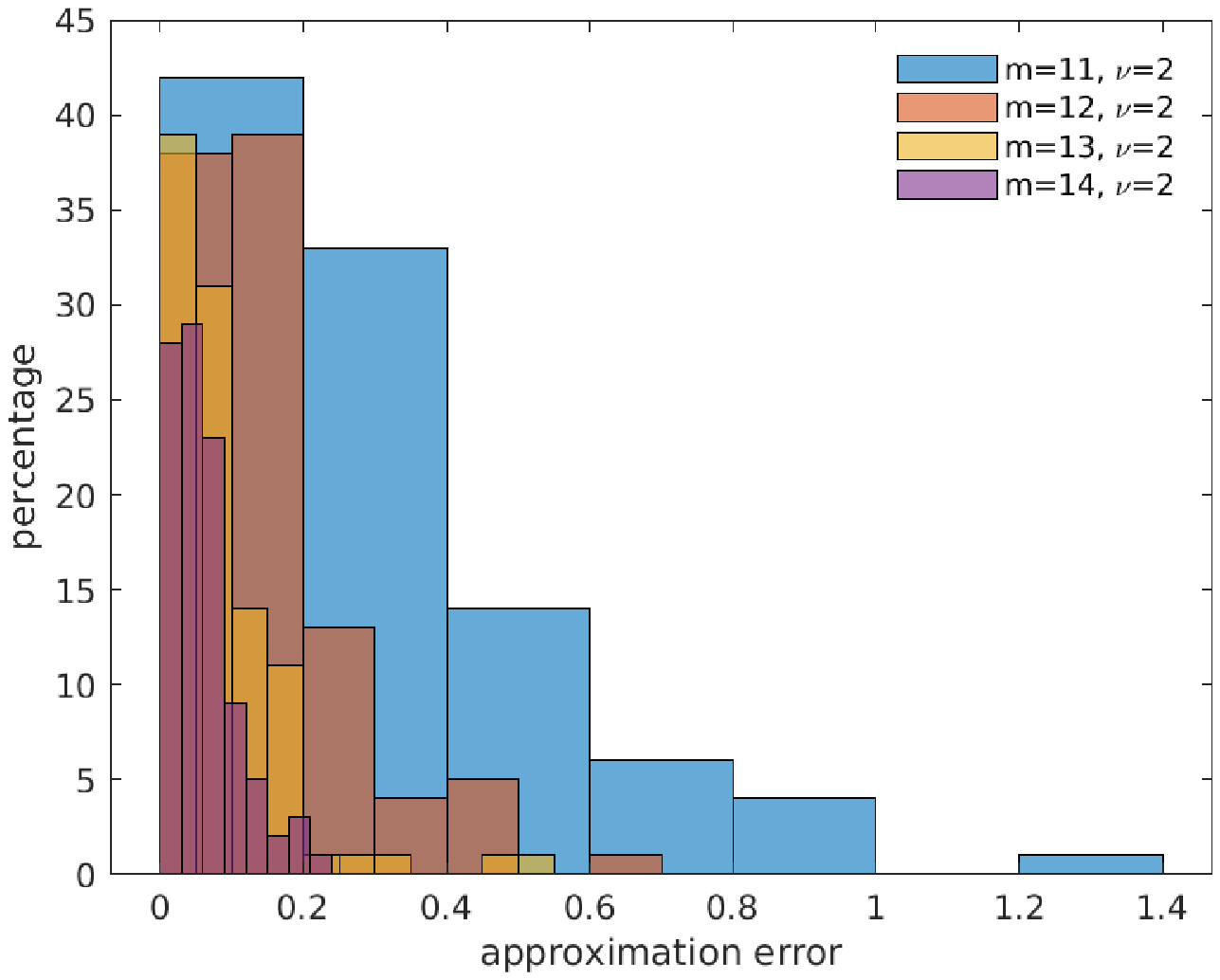}%
\includegraphics[width=0.49\textwidth]{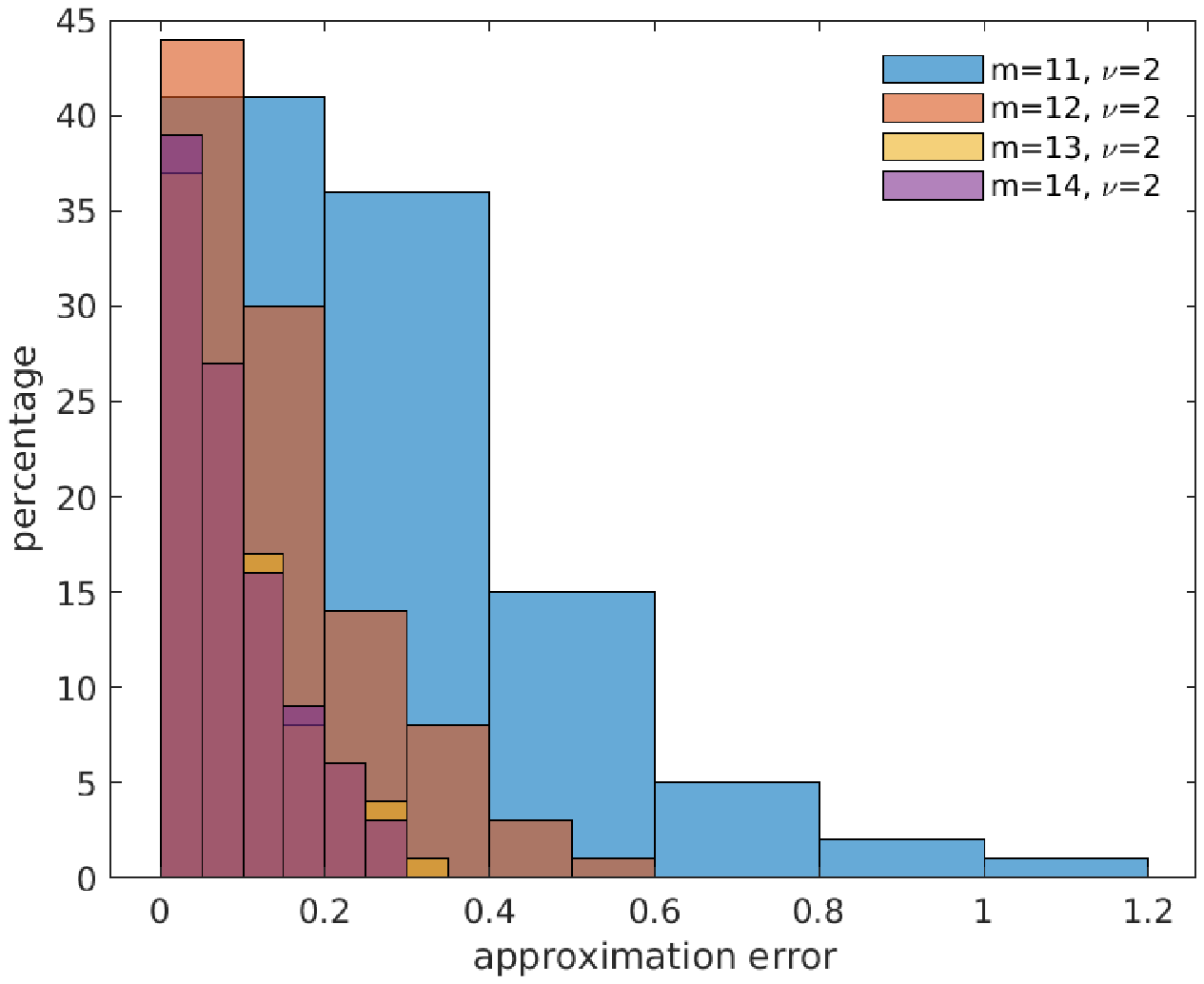}\\
\includegraphics[width=0.30\textwidth]{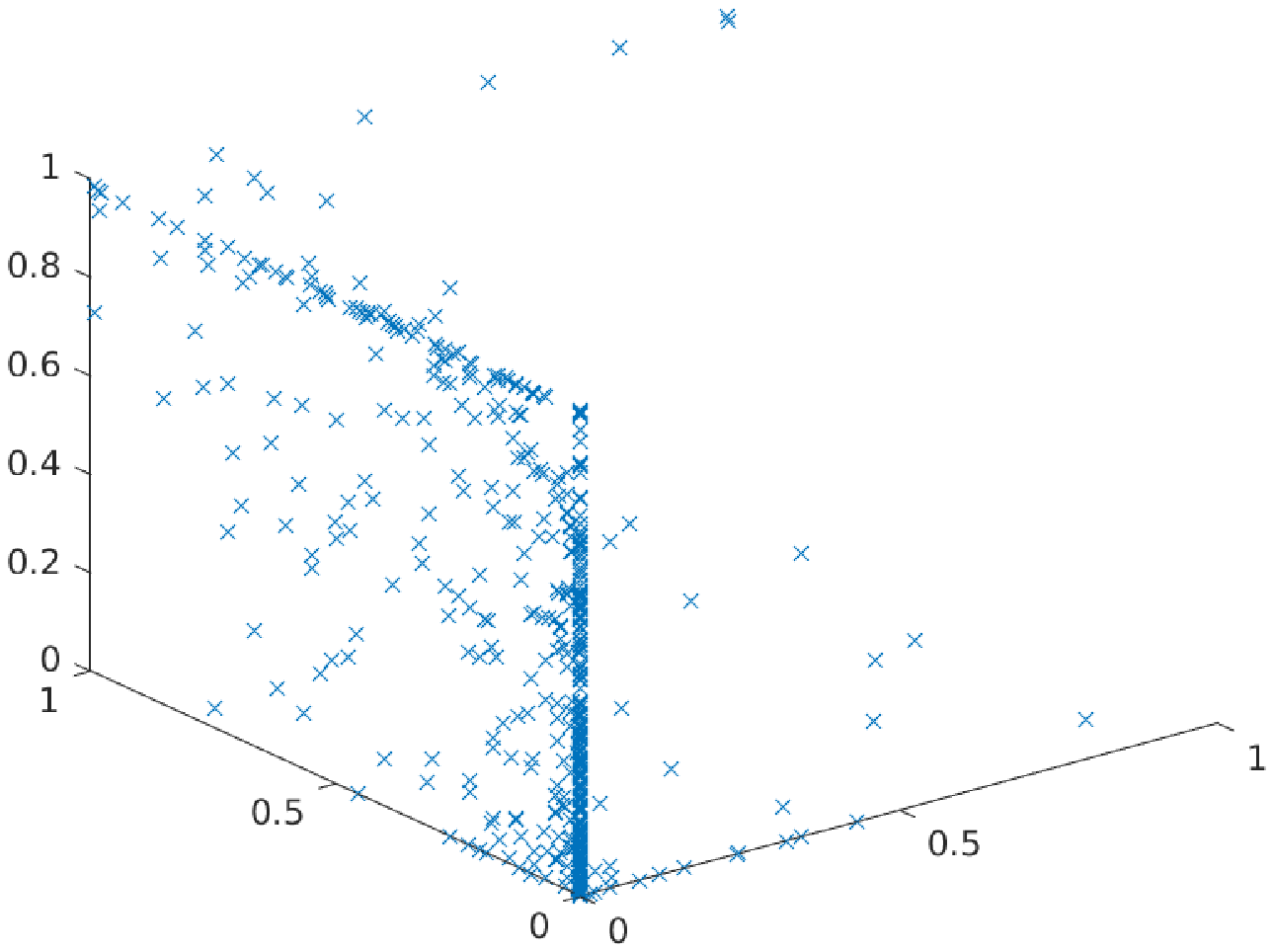}%
\includegraphics[width=0.30\textwidth]{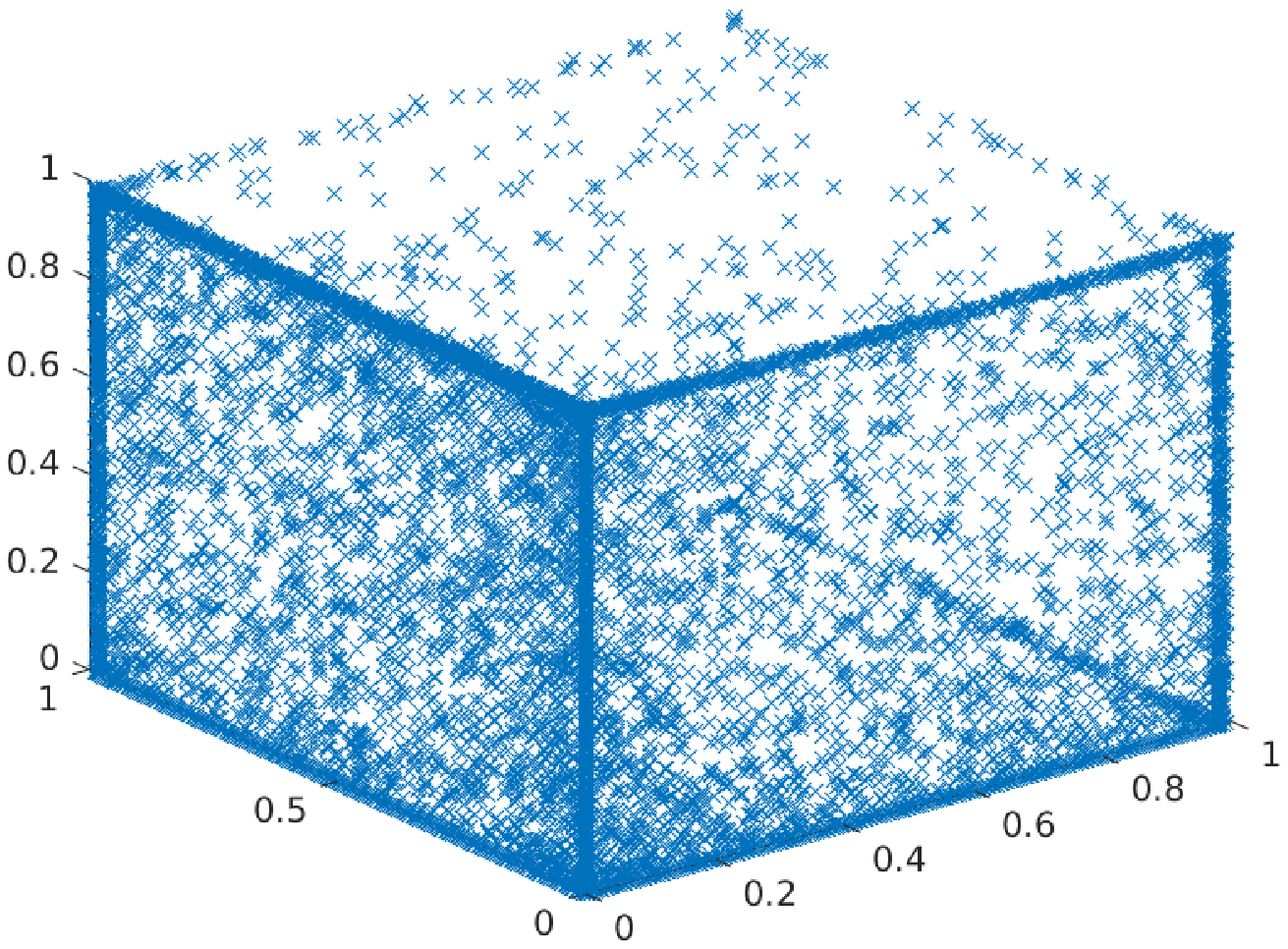}%
\includegraphics[width=0.30\textwidth]{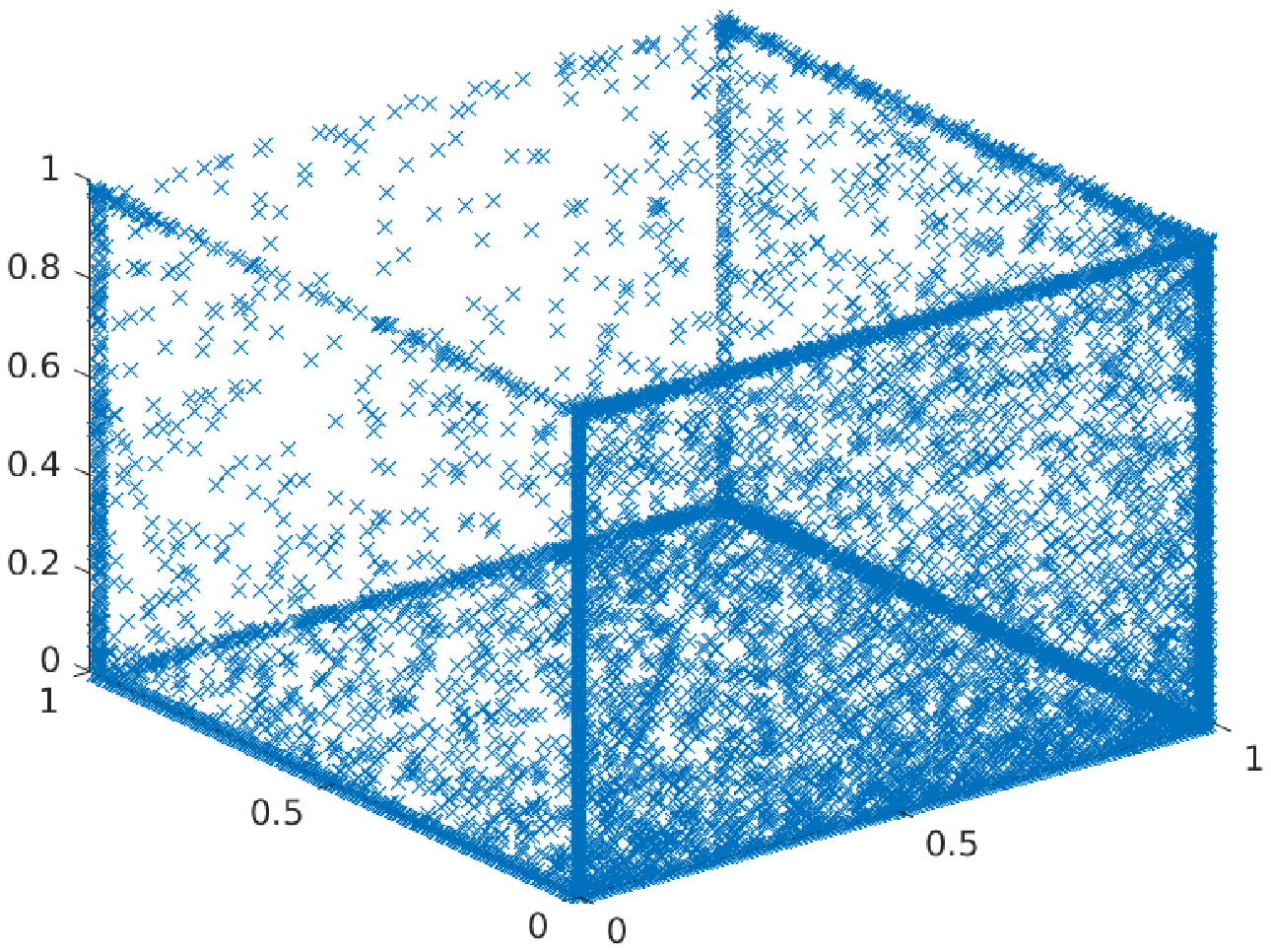}%
\caption{First row: Histrograms of the convergence of the approximation error over 100 randomly generated shallow nets (left) and deep nets (right). The approximation parameters are $N=60000$, $\nu$, and $m$ are given in the plot. Second row: Distribution of the MNIST data projected to three dimensional hyperfaces in dimensions $(1,2,3)$, $(50,80,150)$, and $(220,300,350)$.}
\label{fig:mnistapprox}
\end{figure}

\section*{Acknowledgements}

Josef Dick is partly supported by the Australian Research Council Discovery Project DP190101197. Michael Feischl is supported by the Deutsche Forschungsgemeinschaft (DFG, German Research Foundation) - Project-ID 258734477 - SFB 1173. The authors would like to thank Guoyin Li for pointing out the reference on perturbation analysis of optimization problems.

\bibliographystyle{plain}
\bibliography{literature}

\begin{thebibliography}{10}

\bibitem{AAR20}
D~C Ahfock, W~J Astle, and S~Richardson.
\newblock Statistical properties of sketching algorithms.
\newblock {\em Biometrika}, 2020.
\newblock to appear. See also arXiv:1706.03665 [stat.ME].

\bibitem{subsampling4}
Mingyao Ai, Jun Yu, Huiming Zhang, and HaiYing Wang.
\newblock Optimal subsampling algorithms for big data regressions.
\newblock {\em arXiv-Eprint:1806.06761}, 2018.

\bibitem{BS00}
J.~Fr\'{e}d\'{e}ric Bonnans and Alexander Shapiro.
\newblock {\em Perturbation analysis of optimization problems}.
\newblock Springer Series in Operations Research. Springer-Verlag, New York,
  2000.

\bibitem{sparsegrids}
Hans-Joachim Bungartz and Michael Griebel.
\newblock Sparse grids.
\newblock {\em Acta Numer.}, 13:147--269, 2004.

\bibitem{qmcsoa}
J.~Dick, F.~Y. Kuo, Q.~T. Le~Gia, D.~Nuyens, and C.~Schwab.
\newblock Higher order {QMC} {P}etrov-{G}alerkin discretization for affine
  parametric operator equations with random field inputs.
\newblock {\em SIAM J. Numer. Anal.}, 52(6):2676--2702, 2014.

\bibitem{D08}
Josef Dick.
\newblock Walsh spaces containing smooth functions and quasi-{M}onte {C}arlo
  rules of arbitrary high order.
\newblock {\em SIAM J. Numer. Anal.}, 46(3):1519--1553, 2008.

\bibitem{DP10}
Josef Dick and Friedrich Pillichshammer.
\newblock {\em Digital nets and sequences}.
\newblock Cambridge University Press, Cambridge, 2010.
\newblock Discrepancy theory and quasi-Monte Carlo integration.

\bibitem{F82}
Henri Faure.
\newblock Discr\'{e}pance de suites associ\'{e}es \`a un syst\`eme de
  num\'{e}ration (en dimension {$s$}).
\newblock {\em Acta Arith.}, 41(4):337--351, 1982.

\bibitem{coresets}
Dan Feldman, Melanie Schmidt, and Christian Sohler.
\newblock Turning big data into tiny data: Constant-size coresets for k-means,
  pca and projective clustering.
\newblock In {\em Proceedings of the Twenty-Fourth Annual ACM-SIAM Symposium on
  Discrete Algorithms}, SODA '13, page 1434–1453, USA, 2013. Society for
  Industrial and Applied Mathematics.

\bibitem{HTF17}
Trevor Hastie, Robert Tibshirani, and Jerome Friedman.
\newblock {\em The elements of statistical learning}.
\newblock Springer Series in Statistics. Springer, New York, second edition,
  2009.
\newblock Data mining, inference, and prediction.

\bibitem{sobolt}
Stephen Joe and Frances~Y. Kuo.
\newblock Constructing {S}obol\cprime sequences with better two-dimensional
  projections.
\newblock {\em SIAM J. Sci. Comput.}, 30(5):2635--2654, 2008.

\bibitem{JK08}
Stephen Joe and Frances~Y. Kuo.
\newblock Constructing {S}obol\cprime sequences with better two-dimensional
  projections.
\newblock {\em SIAM J. Sci. Comput.}, 30(5):2635--2654, 2008.

\bibitem{subsampling1}
Ping Ma, Michael~W. Mahoney, and Bin Yu.
\newblock A statistical perspective on algorithmic leveraging.
\newblock {\em J. Mach. Learn. Res.}, 16:861--911, 2015.

\bibitem{spoints}
Simon Mak and V.~Roshan Joseph.
\newblock Support points.
\newblock {\em Ann. Statist.}, 46(6A):2562--2592, 2018.

\bibitem{N87}
Harald Niederreiter.
\newblock Point sets and sequences with small discrepancy.
\newblock {\em Monatsh. Math.}, 104(4):273--337, 1987.

\bibitem{Sob}
I.~M. Sobol\cprime.
\newblock Distribution of points in a cube and approximate evaluation of
  integrals.
\newblock {\em \v{Z}. Vy\v{c}isl. Mat i Mat. Fiz.}, 7:784--802, 1967.

\bibitem{subsampling3}
HaiYing Wang, Rong Zhu, and Ping Ma.
\newblock Optimal subsampling for large sample logistic regression.
\newblock {\em J. Amer. Statist. Assoc.}, 113(522):829--844, 2018.

\bibitem{subsampling2}
Yaqiong Yao and HaiYing Wang.
\newblock Optimal subsampling for softmax regression.
\newblock {\em Statist. Papers}, 60(2):235--249, 2019.

\end{thebibliography}

%\bibitem{BD09} Baldeaux, Jan(5-NSW-SMS); Dick, Josef(5-NSW-SMS) QMC rules of arbitrary high order: reproducing kernel Hilbert space approach. (English summary) Constr. Approx. 30 (2009), no. 3, 495–527. 

\subsection*{Author's addresses}

\begin{itemize}
\item[] Josef Dick, School of Mathematics and Statistics, The University of New South Wales Sydney, Sydney NSW 2052, Australia; Email \email{josef.dick@unsw.edu.au}
\item[]
\item[] Michael Feischl, Institute for Analysis and Scientific Computing, TU Wien, Wiedner Hauptstra\ss e 8--10, 1040 Wien, Austria; Email: \email{michael.feischl@tuwien.ac.at}
\end{itemize}

\end{document}